\newtheorem{theorem}{Theorem}
\theoremstyle{plain}
\newtheorem{corollary}{Corollary}
\newtheorem{definition}{Definition}
\newtheorem{lemma}{Lemma}
\newtheorem{remark}{Remark}
\numberwithin{equation}{section}
\begin{document}

\begin{center}
\pagestyle{myheadings}\thispagestyle{empty}%
\markboth{\bf M. Bouzeraib, A. Boussayoud, }
{\bf The new combinatorial identities of symmetric ...}

\huge\textbf{The new combinatorial identities of symmetric functions}
\end{center}

\vspace{0.3cm}
\begin{center}
\textbf{Meryem Bouzeraib$^{1}$, Ali Boussayoud$^{1}$, Salah Boulaaras$^{2}$   }\\

$^{1}$ LMAM Laboratory and Department of Mathematics, \\
Mohamed Seddik Ben Yahia University, Jijel, Algeria\\
$^{2}$ Department of Mathematics, College of Sciences, Qassim University, Buraydah 51452, Saudi Arabia

E-Mails: meryembouzeraib@gmail.com; aboussayoud@yahoo.fr;  s.boularas@qu.edu.sa;

\vspace{0.3cm}
\textbf{\large Abstract}
\end{center}

\begin{quotation}
\qquad In this paper, we provide some novel 
binomial convolution  related to symmetric functions, as well as convolution sums without the binomial symbol. Moreover we give some new convolution sums of Bernoulli, Euler, and Genocchi numbers and polynomials  with symmetric functions 
, by making use of the
elementary methods including exponential generating functions. From these
convolutions we deduce several new combinatorial identities  for bivariate polynomials, and 
 we establish several new identities related  Bernoulli, Euler and Genocchi numbers and polynomials with certain  bivariate polynomials such as bivariate Fibonacci, bivariate Lucas, bivariate balancing and bivariate balancing-Lucas  polynomials.
 \\
 \\
{\bf 2020 Mathematics Subject Classification:} Primary 05E05;
Secondary 11B39.
\\
\\
{\bf Keywords:}  Bivariate polynomials;
Symmetric functions; Generating functions; Bernoulli and Euler numbers.
\end{quotation}

\section{\textbf{Introduction and preliminaries}}
Symmetric functions are fundamental in several areas of mathematics, such as combinatorics, algebra, and representation theory. The concept of symmetry is essential for analysis and plays a key role in uncovering new insights and relationships.
A symmetric function is a formal power series in the variables 
$\{x_{1}, x_{2}, ..., x_{n}\}$
  that does not change when the variables are rearranged in any order. This means the function remains unchanged under any permutation of its variables. 
For instance,  
$f\{x_{1}, x_{2}, ..., x_{n}\}$  is a symmetric function if for every permutation $\alpha$ of the set  $\{x_{1}, x_{2}, ..., x_{n}\}$, the following relationship is holds:
\begin{equation*}
\sigma f(x_{1},x_{2},...x_{n})=f(x_{\sigma(1)},x_{\sigma(2)},...x_{\sigma(n)})=f(x_{1},x_{2},...x_{n}).
\end{equation*}

We present  a brief overview of the fundamental symmetric functions and some of their properties. For a more detailed explanation and proofs, consult Macdonald's book \cite{Macdonald}. For a positive integer, 
$n$ the following symmetric functions are defined:

\textbf{Elementary Symmetric Function}
\begin{equation*}
e_{k}^{(n)}=e_{k}(x_{1}, x_{2},...,x_{n})=\sum_{i_{1}+i_{2}+...+i_{n}=k}x_{1}^{i_{1}}x_{2}^{i_{2}}...x_{n}^{i_{n}}, \quad \left(0 \le k \le n \right), \label{sq}
\end{equation*} 
with
$i_{n}\text{ ,}...\text{ ,}i_{2} \text{ ,} \text{ ,}i_{1}$
equal
$0$
or 
$1$.

\textbf{Complete Homogeneous Symmetric Function}
\begin{equation*}
h_{k}^{(n)}=h_{k}(x_{1}, x_{2},...,x_{n})=\sum_{i_{1}+i_{2}+...+i_{n}=k}x_{1}^{i_{1}}x_{2}^{i_{2}}...x_{n}^{i_{n}}, \text{  } \left(0 \le k \right), \label{s43q}
\end{equation*}
with 
$i_{n},...,i_{2} ,i_{1}$
greater than or equal to 
$0$.

\textbf{Power sum symmetric function}

\begin{equation*}
p_{k}^{(n)}=p_{k}(x_{1}, x_{2},...,x_{n})=\sum_{i=1}^{n}x_{i}^{k}
\end{equation*}
\begin{definition}
\cite{Abderrezzak} Let $A$ and $E$ be any two alphabets. We define $%
S_{n}(A-E)$ by the following form:
\begin{equation*}
\frac{\prod\limits_{e\in E}(1-ez)}{\prod\limits_{a\in A}(1-az)}
=\sum\limits_{n=0}^{\infty }S_{n}(A-E)z^{n},  \label{jk}
\end{equation*}
with the condition $S_{n}(A-E)=0\ $for\ $n<0.$
\end{definition}
\begin{definition}
\cite{nabiha} Let $n$ be a positive integer and $A=\left\{ x_{1},x_{2}\right\}
\ $ be a set of given variables. Then, the $n^{th}$ symmetric function $%
S_{n}(x_{1}+x_{2})$ is defined by
\begin{equation}
S_{n}(A)=S_{n}(x_{1}+x_{2})=\frac{x_{1}^{n+1}-x_{2}^{n+1}}{x_{1}-x_{2}},%
\text{\ for }n\geq 0. \label{66}
\end{equation}%
\end{definition}
\begin{remark} 
Given an alphabet $A=\{x_{1},x_{2}\}$, then for any positive integer $n$, we
have
\begin{equation}
S_{n}(x_{1}+x_{2})=h_{2}(x_{1},x_{2}).
\end{equation}
\end{remark}
\begin{definition}
 Let $n$ be a positive integer and $A=\{ x_{1},x_{2}\}
\ $ be a set of given variables. Then, the $n^{th}$ symmetric function $%
\phi_{n}(x_{1}+x_{2})$ is defined by
\begin{equation}
\phi_{n}(A)=\phi_{n}(x_{1}+x_{2})=x_{1}^{n}+x_{2}^{n},%
\text{\ for }n\geq 0. \label{66}
\end{equation}%
\end{definition}
\begin{remark}
Given an alphabet $A=\{x_{1},x_{2}\}$, then for any positive integer $n$, we
have
\begin{equation}
\phi_{n}(x_{1}+x_{2})=p_{2}(x_{1},x_{2}).
\end{equation}
\end{remark}
\begin{lemma}\cite{Maa}
Let $n$ be a positive integer. The following equalities hold
\begin{equation}
S_{n}(x_{1}+x_{2})-x_{1}x_{2}S_{n-2}(x_{1}+x_{2})=\phi_{n}(x_{1}+x_{2}).
\label{p}
\end{equation}%
\begin{equation}
\frac{1}{2}\left(
\phi_{n}(x_{1}+x_{2})+(x_{1}-x_{2})S_{n-1}(x_{1}+x_{2})\right) =x_{1}^{n}.
\end{equation}%
\begin{equation}
\frac{1}{2}\left(
\phi_{n}(x_{1}+x_{2})-(x_{1}-x_{2})S_{n-1}(x_{1}+x_{2})\right) =x_{2}^{n}.
\label{kll}
\end{equation}
\end{lemma}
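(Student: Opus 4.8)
The plan is to establish all three equalities by direct substitution of the closed-form definitions $S_{n}(x_{1}+x_{2})=\dfrac{x_{1}^{n+1}-x_{2}^{n+1}}{x_{1}-x_{2}}$ and $\phi_{n}(x_{1}+x_{2})=x_{1}^{n}+x_{2}^{n}$, followed by elementary algebraic simplification over the common denominator $x_{1}-x_{2}$. No induction or generating-function argument is required: each identity reduces to a polynomial factorization, so the proof is purely computational.

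For the first equality, I would replace $S_{n}$ and $S_{n-2}$ by their defining fractions and collect everything over the denominator $x_{1}-x_{2}$, obtaining the numerator
\begin{equation*}
x_{1}^{n+1}-x_{2}^{n+1}-x_{1}x_{2}\left(x_{1}^{n-1}-x_{2}^{n-1}\right)=x_{1}^{n+1}-x_{1}^{n}x_{2}+x_{1}x_{2}^{n}-x_{2}^{n+1}.
\end{equation*}
The decisive step is to recognize that this numerator factors as $\left(x_{1}-x_{2}\right)\left(x_{1}^{n}+x_{2}^{n}\right)$, by grouping the first two and last two terms as $x_{1}^{n}(x_{1}-x_{2})+x_{2}^{n}(x_{1}-x_{2})$. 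Cancelling the factor $x_{1}-x_{2}$ then yields exactly $x_{1}^{n}+x_{2}^{n}=\phi_{n}(x_{1}+x_{2})$, as claimed.

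The remaining two equalities rest on the single observation that shifting the index gives $(x_{1}-x_{2})\,S_{n-1}(x_{1}+x_{2})=x_{1}^{n}-x_{2}^{n}$ directly from the definition, so that the factor $x_{1}-x_{2}$ cancels before any further work. Substituting this together with $\phi_{n}(x_{1}+x_{2})=x_{1}^{n}+x_{2}^{n}$ into the two bracketed expressions turns them into $\tfrac{1}{2}\left((x_{1}^{n}+x_{2}^{n})+(x_{1}^{n}-x_{2}^{n})\right)=x_{1}^{n}$ and $\tfrac{1}{2}\left((x_{1}^{n}+x_{2}^{n})-(x_{1}^{n}-x_{2}^{n})\right)=x_{2}^{n}$, which are precisely the second and third identities. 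The only point demanding care — and the nearest thing to an obstacle — is the factorization in the first identity; the second and third follow with no real difficulty once the product $(x_{1}-x_{2})S_{n-1}$ is recognized in simplified form.
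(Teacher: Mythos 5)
Your proof is correct. Note that the paper itself gives no proof of this lemma: it is imported verbatim from the cited reference \cite{Maa}, so there is nothing internal to compare against; your direct verification from the closed forms $S_{n}(x_{1}+x_{2})=\frac{x_{1}^{n+1}-x_{2}^{n+1}}{x_{1}-x_{2}}$ and $\phi_{n}(x_{1}+x_{2})=x_{1}^{n}+x_{2}^{n}$ is exactly the standard argument, and the factorization $x_{1}^{n+1}-x_{1}^{n}x_{2}+x_{1}x_{2}^{n}-x_{2}^{n+1}=(x_{1}-x_{2})(x_{1}^{n}+x_{2}^{n})$ checks out. The only point worth adding for completeness is the boundary case of the first identity: for $n=1$ the term $S_{n-2}=S_{-1}$ is handled by the convention $S_{n}=0$ for $n<0$ (consistently, the formula $\frac{x_{1}^{n-1}-x_{2}^{n-1}}{x_{1}-x_{2}}$ also vanishes there), so the identity holds for all positive $n$, not just $n\geq 2$.
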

\begin{lemma}\cite{Maa}
The exponential generating functions of $S_{n-1}(x_{1}+x_{2})$ and
$\phi_{n}(x_{1}+x_{2})$ are respectively
given by
\begin{equation}
\sum\limits_{n=0}^{\infty }S_{n-1}(x_{1}+x_{2})\frac{z^{n}}{n!}=\frac{1}{e_{1}-e_{2}}%
\left( \exp (x_{1}z)-\exp (x_{2}z)\right). \label{kl}
\end{equation}%
\begin{equation}
\sum\limits_{n=0}^{\infty }\phi_{n}(x_{1}+x_{2})\frac{%
z^{n}}{n!}=\exp (x_{1}z)+\exp (x_{2}z).  \label{l}
\end{equation}
\end{lemma}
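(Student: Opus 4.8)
The plan is to establish both exponential generating functions by direct substitution of the closed-form expressions for $\phi_{n}$ and $S_{n-1}$, and then to recognize the resulting series as Maclaurin expansions of the exponential function, using $\exp(w)=\sum_{n\geq 0}w^{n}/n!$. No deeper machinery is needed; everything reduces to term-by-term manipulation of absolutely convergent power series. Throughout I read the quantity $e_{1}-e_{2}$ appearing in the statement as $x_{1}-x_{2}$, which is exactly the denominator forced by the defining formula $S_{m}(x_{1}+x_{2})=(x_{1}^{m+1}-x_{2}^{m+1})/(x_{1}-x_{2})$.

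I would prove the identity for $\phi_{n}$ first, since it is the more transparent of the two. Starting from the defining relation $\phi_{n}(x_{1}+x_{2})=x_{1}^{n}+x_{2}^{n}$, substitution into the generating series and splitting the sum gives
\[
\sum_{n=0}^{\infty}\phi_{n}(x_{1}+x_{2})\frac{z^{n}}{n!}
=\sum_{n=0}^{\infty}\frac{(x_{1}z)^{n}}{n!}+\sum_{n=0}^{\infty}\frac{(x_{2}z)^{n}}{n!}
=\exp(x_{1}z)+\exp(x_{2}z),
\]
where each piece is recognized as the Maclaurin series of the corresponding exponential. This settles the second equation.

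For the first equation I would use $S_{n-1}(x_{1}+x_{2})=\dfrac{x_{1}^{n}-x_{2}^{n}}{x_{1}-x_{2}}$ for $n\geq 1$, together with the boundary convention $S_{-1}(x_{1}+x_{2})=0$ (from the condition $S_{n}(A-E)=0$ for $n<0$), so that the $n=0$ term of the series vanishes. Factoring out $1/(x_{1}-x_{2})$, which is legitimate under the standing assumption $x_{1}\neq x_{2}$, reduces the series to
\[
\frac{1}{x_{1}-x_{2}}\left(\sum_{n=1}^{\infty}\frac{(x_{1}z)^{n}}{n!}-\sum_{n=1}^{\infty}\frac{(x_{2}z)^{n}}{n!}\right)
=\frac{1}{x_{1}-x_{2}}\bigl((\exp(x_{1}z)-1)-(\exp(x_{2}z)-1)\bigr),
\]
and the two constant terms $-1$ cancel, leaving $\dfrac{1}{e_{1}-e_{2}}\bigl(\exp(x_{1}z)-\exp(x_{2}z)\bigr)$, as claimed.

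The only point demanding any care — and hence the closest thing to an obstacle — is the bookkeeping of the index shift in $S_{n-1}$: one must confirm that extending the summation down to $n=0$ introduces no spurious contribution (guaranteed by $S_{-1}=0$) and that the subtracted constants arising from $\exp(x_{1}z)$ and $\exp(x_{2}z)$ cancel exactly rather than leaving a residual $1/(x_{1}-x_{2})$ term. Once this is checked, both identities follow immediately, and the same argument in fact shows the series converge for all $z$, so the generating-function statements are valid as genuine analytic identities and not merely formal ones.
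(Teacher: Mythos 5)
Your proof is correct and complete: substituting the closed forms $\phi_{n}=x_{1}^{n}+x_{2}^{n}$ and $S_{n-1}=(x_{1}^{n}-x_{2}^{n})/(x_{1}-x_{2})$, splitting the sums, and recognizing the exponential series is exactly the right argument, and you correctly handle the two delicate points (the vanishing $n=0$ term via $S_{-1}=0$, and the cancellation of the two $-1$ constants), as well as the notational slip $e_{1}-e_{2}$, which indeed must be read as $x_{1}-x_{2}$. Note that the paper itself gives no proof of this lemma --- it is imported wholesale from the reference \cite{Maa} --- so there is no in-paper argument to compare against; your direct verification is the standard proof one would expect to find there.
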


Now, we present the definitions of the bivariate Fibonacci, bivariate Lucas, bivariate Balancing, and bivariate Lucas-Balancing polynomials, along with their respective Binet formulas (see \cite{Asci, Aşcı1, Catarino}).
\begin{definition}
The bivariate Fibonacci $\{F_{n}(y,t)\}_{n\in \mathbb{N}}$ and bivariate Lucas $\{L_{n}(y,t)\}_{n\in \mathbb{N}}$ polynomials are defined by the
following recurrence relations:
\begin{equation*}
\begin{cases}
F_{n}(y,t)=yF_{n-1}(y,t)+tF_{n-2}(y,t), \text{  } n\geqslant 2\\
F_{0}(y,t)=0, \text{ } F_{1}(y,t)=1
\end{cases}. \label{jio}
\end{equation*}
\begin{equation*}
\begin{cases}
L_{n}(y,t)=yL_{n-1}(y,t)+tL_{n-2}(y,t), \text{ } n\geqslant 2\\
L_{0}(y,t)=2, \text{  } L_{1}(y,t)=y
\end{cases}.
\end{equation*}
\end{definition}
Special cases of these bivariate polynomials are Fibonacci polynomials $F_{n}(y,1)$,  Lucas polynomials $L_{n}(y,1)$, Fibonacci  numbers $F_{n}(1,1)$, Lucas numbers $L_{n}(1,1)$.

The Binet formulas for  bivariate Fibonacci, bivariate Lucas are given as follows:
\begin{equation*}
F_{n}(y,t)=\frac{\lambda_{1}^{n}-\lambda_{2}^{n}}{\lambda_{1}-\lambda_{2}},
\end{equation*}
\begin{equation*}
L_{n}(y,t)=\lambda_{1}^{n}+\lambda_{2}^{n} ,
\end{equation*}
where
$\lambda_{1}=\frac{y+\sqrt{y^{2}+4t}}{2}$
and
$\lambda_{1}=\frac{y+\sqrt{y^{2}+4t}}{2}$ are the roots of the characteristic equation $\lambda^{2}-y\lambda-t=0$.
\begin{definition}
The bivariate balancing  $\{B^{*}_{n}(y,t)\}_{n\in \mathbb{N}}$ and bivariate Lucas-balancing $\{C_{n}(y,t)\}_{n\in \mathbb{N}}$ polynomials are defined by the
following recurrence relations:
\begin{equation*}
\begin{cases}
B^{*}_{n}(y,t)=6yB^{*}_{n-1}(y,t)-tB^{*}_{n-2}(y,t),\text{ } n\geqslant 2 \\
B^{*}_{0}(y,t)=0, \text{ } B^{*}_{1}(y,t)=1
\end{cases}. \label{khj}
\end{equation*}
\begin{equation*}
\begin{cases}
C_{n}(y,t)=6yC_{n-1}(y,t)-tC_{n-2}(y,t), \text{ } n\geqslant 2\\
C_{0}(y,t)=1, \text{ } C_{1}(y,t)=3y
\end{cases}.
\end{equation*}

\end{definition}
Notable special cases of these bivariate polynomials are balancing  polynomials $B^{*}_{n}(y,1)$,  Lucas-balancing  polynomials $C_{n}(y,1)$, balancing   numbers $B^{*}_{n}(1,1)$, Lucas-balancing  numbers $C_{n}(1,1)$.

The Binet formulas for  bivariate balancing , bivariate Lucas-balancing  are given as follows:
\begin{equation*}
B^{*}_{n}(y,t)=\frac{\lambda_{1}^{n}-\lambda_{2}^{n}}{\lambda_{1}-\lambda_{2}},
\end{equation*}
\begin{equation*}
C_{n}(y,t)=\frac{1}{2}(\lambda_{1}^{n}-\lambda_{2}^{n}),
\end{equation*}
with:
$\lambda_{1}=3y+\sqrt{9y^{2}-t}$
and
$\lambda_{2}=3y+\sqrt{9y^{2}-t}$ are the roots of the characteristic equation $$\lambda^{2}-6y\lambda+t=0$$.

On the other side, the Bernoulli and Euler numbers (resp. polynomials) $\left\{ B_{n}\right\} _{n\geqslant 0}$ and $%
\left\{ E_{n}\right\} _{n\geqslant 0}$ (resp. $\left\{ B_{n}(x)\right\} _{n\geqslant
0} $ and $\left\{ E_{n}(x)\right\} _{n\geqslant 0}$) are, respectively, defined by the following
exponential generating functions  (\cite{Dilcher, DilhO})
\begin{equation}
\sum\limits_{n=0}^{\infty}B_{n}\frac{z^{n}}{n!}=\frac{z}{\exp (z)-1},  \label{15}
\end{equation}%
\begin{equation}
\sum\limits_{n=0}^{\infty}E_{n}\frac{z^{n}}{n!}=\frac{2\exp (z)}{\exp (2z)+1}.
\label{16}
\end{equation} 
\begin{equation}
\sum\limits_{n=0}^{\infty}B_{n}(x)\frac{z^{n}}{n!}=\frac{z\exp (xz)}{\exp
(z)-1},  \label{17}
\end{equation}%
\begin{equation}
\sum\limits_{n=0}^{\infty}E_{n}(x)\frac{z^{n}}{n!}=\frac{2\exp (xz)}{\exp
(z)+1}.  \label{18}
\end{equation}

It is also defined Genocchi numbers (resp. polynomials) by the
exponential generating functions as follows(\cite{DilO})
\begin{equation}
\sum\limits_{n=0}^{\infty}G_{n}\frac{z^{n}}{n!}=\frac{2z}{\exp (z)+1}.
\label{116}
\end{equation} 
\begin{equation}
\sum\limits_{n=0}^{\infty }G_{n}(x)\frac{z^{n}}{n!}=\frac{2z\exp(xz)}{\exp(z)+1} \label{105}
\end{equation}

Convolutions play a central role in number theory, these relationships arise through sequences of important and distinct numbers and polynomials. It is usually obtained by manipulating the generating functions. Classic examples involving Fibonacci and Lucas numbers are given in the articles.(\cite{Lk, Lko}).

In view of the important  role of Bernoulli and Euler numbers and polynomials in mathematics, numerous studies have been conducted on the relationships between them and other well-known number sequences and polynomials. Several papers have revealed interesting results about them \cite{rev5,Ma, rev3, Frontczak2, rev6, rev4, rev1, rev2}.

In this paper, we present a more direct and systematic method that provides access to convolutions for second-order sequences and aims to find convolution sums of Bernoulli, Euler, and Genocchi numbers and polynomials, which we began constructing in the  article \cite{Maa}. The approach is accurate and efficient because it allows us to obtain various convolutions of second-order sequences using symmetric functions, without the need for iteration and complex calculations involving numbers or polynomials. The structure of the paper is as follows:

\textbf{ In section 2,} we presented new formulas for binomial convolution sequences related to symmetric functions, as well as convolution sums without the binomial symbol, at specific points enable us to establish some interesting
combinatorial identities involving bivariate polynomials.

\textbf{ In section 3,} 
 we give some new convolution sums of Bernoulli, Euler, and Genocchi numbers  with symmetric functions. From these
convolutions we deduce several new identities of Bernoulli, Euler and Genocchi numbers
and polynomials with special bivariate polynomials.

\textbf{ In section 4,} 
 We present new convolution sums involving Bernoulli, Euler, and Genocchi polynomials with symmetric functions. These convolutions allow us to establish several new identities for Bernoulli and Euler numbers and polynomials in connection with special bivariate polynomials.

\section{\textbf{Main results of convolution sums of symmetric functions}}

In this section, we first prove several new theorems about symmetric functions, and then we examine some special cases of bivariate polynomials.

\subsection{\textbf{New theorems}}

In this part, we are now in a position to provide two new theorems.
\begin{theorem}\label{th3}
Given an alphabet $A=\{x_{1},x_{2}\}$, then for any positive integer $n$, we
have
\begin{gather}
\sum_{n=0}^{n}\binom{n}{k}S_{n-k-1}(x_{1}+x_{2})S_{k-1}(x_{1}+x_{2})=\frac{1}{(x_{1}-x_{2})^{2}}\left( 2^{n}\phi_{n}(x_{1}+x_{2})-2(x_{1}+x_{2})^{n}\right).\label{xx}\\ 
\sum_{k=0}^{n}\binom{n}{k}\phi_{n-k}(x_{1}+x_{2})\phi_{k}(x_{1}+x_{2})=2^{n}\phi_{n}(x_{1}+x_{2})+2(x_{1}+x_{2})^{n}.\label{a2}\\
\sum_{k=0}^{n}\binom{n}{k}S_{n-k-1}(x_{1}+x_{2})\phi_{k}(x_{1}+x_{2})=2^{n}S_{n-1}(x_{1}+x_{2}).\label{a3}
\end{gather}
\end{theorem}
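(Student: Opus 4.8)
The plan is to read all three identities as statements about coefficients of products of exponential generating functions. Recall the standard fact that if $f(z)=\sum_{n\ge 0} a_n z^n/n!$ and $g(z)=\sum_{n\ge 0} b_n z^n/n!$, then the coefficient of $z^n/n!$ in the product $f(z)g(z)$ is exactly the binomial convolution $\sum_{k=0}^n \binom{n}{k} a_k b_{n-k}$. Thus each left-hand side is the $n$-th such coefficient of a product of two of the generating functions supplied in the last lemma of the preliminaries, namely
\[
\Psi(z):=\sum_{n\ge 0} S_{n-1}(x_1+x_2)\frac{z^n}{n!}=\frac{e^{x_1 z}-e^{x_2 z}}{x_1-x_2},\qquad \Phi(z):=\sum_{n\ge 0}\phi_n(x_1+x_2)\frac{z^n}{n!}=e^{x_1 z}+e^{x_2 z}.
\]
The entire argument then reduces to forming the three products $\Phi^2$, $\Psi^2$, and $\Psi\Phi$, simplifying the resulting exponentials, and matching coefficients of $z^n/n!$.

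First I would treat \eqref{a2}. Squaring $\Phi$ gives $\Phi(z)^2=e^{2x_1 z}+2e^{(x_1+x_2)z}+e^{2x_2 z}$. The two outer terms combine into $\sum_{n\ge 0}\bigl((2x_1)^n+(2x_2)^n\bigr)z^n/n!=\sum_{n\ge 0}2^n\phi_n(x_1+x_2)z^n/n!$, while the middle term is $\sum_{n\ge 0}2(x_1+x_2)^n z^n/n!$. Equating this with the convolution form of $\Phi^2$ yields \eqref{a2} at once. Identity \eqref{xx} is handled identically with $\Psi$ in place of $\Phi$: here $\Psi(z)^2=\frac{1}{(x_1-x_2)^2}\bigl(e^{2x_1 z}-2e^{(x_1+x_2)z}+e^{2x_2 z}\bigr)$, so the only change is the overall factor $1/(x_1-x_2)^2$ and the sign of the cross term, which produces $2^n\phi_n(x_1+x_2)-2(x_1+x_2)^n$ inside the bracket, precisely the claimed right-hand side.

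For \eqref{a3} I would form the mixed product and use the difference of squares,
\[
\Psi(z)\Phi(z)=\frac{1}{x_1-x_2}\bigl(e^{x_1 z}-e^{x_2 z}\bigr)\bigl(e^{x_1 z}+e^{x_2 z}\bigr)=\frac{e^{2x_1 z}-e^{2x_2 z}}{x_1-x_2},
\]
whose $z^n/n!$ coefficient is $\dfrac{(2x_1)^n-(2x_2)^n}{x_1-x_2}=2^n\,\dfrac{x_1^n-x_2^n}{x_1-x_2}=2^n S_{n-1}(x_1+x_2)$ by the Binet-type definition of $S_{n-1}$. Comparing this with the binomial-convolution form of $\Psi\Phi$ gives \eqref{a3}.

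There is no serious obstacle here; the whole proof is coefficient extraction from elementary exponential identities. The only points requiring care are bookkeeping ones: respecting the index shift in $S_{n-1}$ (equivalently the convention $S_{-1}=0$, which makes the $n=0$ term of $\Psi$ vanish, consistent with $(e^{0}-e^{0})/(x_1-x_2)=0$), and correctly tracking the factor $2^n$ arising because the squared and mixed exponentials carry the arguments $2x_1 z$ and $2x_2 z$. Once these are observed, matching coefficients of $z^n/n!$ on the two sides of each product establishes all three identities simultaneously.
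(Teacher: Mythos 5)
Your proposal is correct and follows essentially the same route as the paper: both arguments square (or multiply) the exponential generating functions $\sum_{n\ge 0}S_{n-1}(x_{1}+x_{2})\frac{z^{n}}{n!}$ and $\sum_{n\ge 0}\phi_{n}(x_{1}+x_{2})\frac{z^{n}}{n!}$, expand the resulting exponentials, and compare coefficients of $\frac{z^{n}}{n!}$. Your write-up is in fact a bit more explicit than the paper's (which leaves the expansions and the mixed product to the reader, and even carries a typo $(x_{1}+x_{2})^{2}$ in place of $(x_{1}-x_{2})^{2}$ in its final display), but there is no substantive difference in method.
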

\begin{proof}
We square both sides of the exponential generating function associated with the symmetric function $S_{k-1}(x_{1}+x_{2})$, we obtain 
\begin{equation*}
\left(\sum_{k=0}^{\infty}S_{n-1}(x_{1}+x_{2})\frac{z^{n}}{n!}\right)\left(\sum_{k=0}^{\infty}S_{k-1}(x_{1}+x_{2})\frac{z^{n}}{n!}\right)=\sum_{n=0}^{\infty}\sum_{k=0}^{n}\binom{n}{k}S_{n-k-1}(x_{1}+x_{2})S_{k-1}(x_{1}+x_{2})\frac{z^{n}}{n!},
\end{equation*}
and
\begin{align*}
\left(\frac{1}{x_{1}-x_{2}}(\exp(x_{1}z)-\exp(x_{2}z))\right)^{2}&=\frac{1}{(x_{1}-x_{2})^{2}}\sum_{n=0}^{\infty} \left(2^{n}(x_{1}^{n}+x_{2}^{n})-2(x_{1}+x_{2})^{n}\right)\frac{z^{n}}{n!}\\
&=\frac{1}{(x_{1}-x_{2})^{2}}\sum_{n=0}^{\infty} \left(2^{n}\phi_{n}(x_{1}+x_{2})-2(x_{1}+x_{2})^{n}\right)\frac{z^{n}}{n!}.
\end{align*}
Comparing of the coefficients of $\frac{z^{n}}{n!}$, we obtain
\begin{equation*}
\sum_{k=0}^{n}\binom{n}{k}S_{n-k-1}(x_{1}+x_{2})S_{k-1}(x_{1}+x_{2})=\frac{1}{(x_{1}+x_{2})^{2}}\left( 2^{n}\phi_{n}(x_{1}+x_{2})-2(x_{1}+x_{2})^{n}\right).
\end{equation*}

Similarly, we square both sides of the exponential generating function associated with the symmetric function $\phi_{n}(x_{1}+x_{2})$ we obtain the equation \eqref{a2}. As for the relationship it is the result of the product of the exponential generating function of symmetric function $S_{n-1}(x_{1}+x_{2})$ and the exponential generating function of symmetric function $\phi_{n}(x_{1}+x_{2})$.

\end{proof}

\begin{theorem}\label{th5}
Given an alphabet $A=\{x_{1},x_{2}\}$, then for any positive integer $n$, we
have
\begin{gather}
\sum_{k=0}^{n}\phi_{k}(x_{1}+x_{2})\phi_{n-k}(x_{1}+x_{2})=(n+1)\phi_{n}(x_{1}+x_{2})+2S_{n}(x_{1}+x_{2}).\label{nnn}\\
\sum_{k=0}^{n}S_{k-1}(x_{1}+x_{2})S_{n-k-1}(x_{1}+x_{2})=\frac{1}{(x_{1}-x_{2})^{2}}\left((n+1)\phi_{n}(x_{1}+x_{2})-2S_{n}(x_{1}+x_{2})\right). \label{bc}\\
\sum_{k=0}^{n}\phi_{k}(x_{1}+x_{2})S_{n-k-1}(x_{1}+x_{2})=(n+1)S_{n-1}(x_{1}+x_{2}).\label{cvv}
\end{gather}
\end{theorem}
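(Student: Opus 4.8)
The three identities in \thmref{th5} are ordinary (non-binomial) convolutions, so unlike in the proof of \thmref{th3} the exponential generating functions \eqref{kl} and \eqref{l} are no longer the right tool: the exponential product introduces a spurious $\binom{n}{k}$. The device matched to an unweighted sum $\sum_{k=0}^{n}a_{k}b_{n-k}$ is the \emph{ordinary} generating function, whose Cauchy product reproduces precisely this sum. That said, I would bypass generating functions altogether and argue directly from the closed forms $\phi_{n}(x_{1}+x_{2})=x_{1}^{n}+x_{2}^{n}$ and $S_{n-1}(x_{1}+x_{2})=\dfrac{x_{1}^{n}-x_{2}^{n}}{x_{1}-x_{2}}$ already recorded in the excerpt, since these turn each convolution into an elementary double sum.

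The plan is to substitute the closed forms into each left-hand side and expand the product of the two factors into its four monomial pieces. In every case the ``diagonal'' pieces $x_{1}^{k}x_{1}^{n-k}=x_{1}^{n}$ and $x_{2}^{k}x_{2}^{n-k}=x_{2}^{n}$ are independent of the summation index, so summing over $0\le k\le n$ simply multiplies by $n+1$ and produces the term $(n+1)\phi_{n}(x_{1}+x_{2})$ (respectively $(n+1)(x_{1}^{n}-x_{2}^{n})$ in \eqref{cvv}). The only nontrivial input is the evaluation of the two ``cross'' sums, and the key observation is the finite geometric identity
\[
\sum_{k=0}^{n}x_{1}^{k}x_{2}^{n-k}=\frac{x_{1}^{n+1}-x_{2}^{n+1}}{x_{1}-x_{2}}=S_{n}(x_{1}+x_{2}),
\]
together with its mirror image $\sum_{k=0}^{n}x_{1}^{n-k}x_{2}^{k}=S_{n}(x_{1}+x_{2})$, obtained by the substitution $k\mapsto n-k$. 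Thus both cross sums collapse to the single symmetric function $S_{n}(x_{1}+x_{2})$.

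With these ingredients the three identities fall out by bookkeeping the signs. In \eqref{nnn} the product $(x_{1}^{k}+x_{2}^{k})(x_{1}^{n-k}+x_{2}^{n-k})$ carries both cross terms with a plus sign, so they combine to $2S_{n}(x_{1}+x_{2})$, giving $(n+1)\phi_{n}(x_{1}+x_{2})+2S_{n}(x_{1}+x_{2})$. In \eqref{bc} the product $(x_{1}^{k}-x_{2}^{k})(x_{1}^{n-k}-x_{2}^{n-k})$ carries both cross terms with a minus sign, yielding $(n+1)\phi_{n}(x_{1}+x_{2})-2S_{n}(x_{1}+x_{2})$, and dividing by $(x_{1}-x_{2})^{2}$ (the square arising from the two factors $S_{k-1}$ and $S_{n-k-1}$) produces the stated form. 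In \eqref{cvv} the mixed product $(x_{1}^{k}+x_{2}^{k})(x_{1}^{n-k}-x_{2}^{n-k})$ carries the two cross terms with opposite signs, so they cancel, leaving only $(n+1)(x_{1}^{n}-x_{2}^{n})$, and the single factor $1/(x_{1}-x_{2})$ turns this into $(n+1)S_{n-1}(x_{1}+x_{2})$.

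I do not expect a genuine obstacle: the whole argument rests on the one geometric-sum identity above and on correctly tracking which cross terms add and which cancel. The only point needing care is the degenerate specialization $x_{1}=x_{2}$, where the geometric formula is read in its limiting form; since all three statements are polynomial identities in $x_{1},x_{2}$ once cleared of denominators, it suffices to verify them on the open set $x_{1}\neq x_{2}$ and invoke continuity. As a consistency check one can re-derive \eqref{nnn}--\eqref{cvv} via ordinary generating functions, using $\sum_{n\ge0}S_{n}(x_{1}+x_{2})z^{n}=\bigl(1-(x_{1}+x_{2})z+x_{1}x_{2}z^{2}\bigr)^{-1}$ and the operator $f(z)\mapsto\frac{d}{dz}\bigl(zf(z)\bigr)$ that implements multiplication of coefficients by $n+1$; squaring or multiplying the relevant series then reproduces the three identities exactly.
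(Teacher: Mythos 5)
Your proof is correct and takes essentially the same route as the paper's: substitute the closed forms $\phi_{k}=x_{1}^{k}+x_{2}^{k}$ and $S_{k-1}=(x_{1}^{k}-x_{2}^{k})/(x_{1}-x_{2})$, split each product into diagonal terms (giving the $(n+1)$ factor) and cross terms, and evaluate the cross sums as geometric sums equal to $S_{n}(x_{1}+x_{2})$. The only differences are cosmetic: you carry out the sign bookkeeping explicitly for all three identities (the paper proves only \eqref{nnn} and asserts the others are analogous), and you note the degenerate case $x_{1}=x_{2}$, which the paper's division by $\frac{x_{1}}{x_{2}}-1$ silently assumes away.
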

\begin{proof}
We prove only the first relation; the proof for the other relations follows in the same manner using the definition of the two symmetric functions $S_{n-1}(x_{1}+x_{2})$ and $\phi_{n}(x_{1}+x_{2})$.
\begin{align*}
\sum_{k=0}^{n}\phi_{k}(x_{1}+x_{2})\phi_{n-k}(x_{1}+x_{2})&=\sum_{k=0}^{n}(x_{1}^{k}+x_{2}^{k})(x_{1}^{n-k}+x_{2}^{n-k})\\
&=\sum_{k=0}^{n}(x_{1}^{n}+x_{1}^{k}x_{2}^{n-k}+x_{2}^{k}x_{1}^{n-k}+x_{2}^{n})\\
&=\sum_{k=0}^{n}(x_{1}^{n}+x_{2}^{n})+x_{2}^{n}\sum_{k=0}^{n}\left(\frac{x_{1}}{x_{2}}\right)^{k} +x_{1}^{n}\sum_{k=0}^{n}\left(\frac{x_{2}}{x_{1}}\right)^{k}\\
&=(n+1)(x_{1}^{n}+x_{2}^{n})+x_{2}^{n}\frac{\left(\frac{x_{1}}{x_{2}}\right)^{n+1}-1}{\frac{x_{1}}{x_{2}}-1}+x_{1}^{n}\frac{\left(\frac{x_{2}}{x_{1}}\right)^{n+1}-1}{\frac{x_{2}}{x_{1}}-1}\\
&=(n+1)(x_{1}^{n}+x_{2}^{n})+2\frac{x_{1}^{n+1}+x_{2}^{n+1}}{x_{1}-x_{2}}\\
&=(n+1)\phi_{n}(x_{1}+x_{2})+2S_{n}(x_{1}+x_{2}).
\end{align*}

The proof is now complete.
\end{proof}

\subsection{\textbf{Some applications}}

We, now consider the Theorems \ref{th3} and \ref{th5} to derive the following two cases.
\begin{enumerate}
	\item [\textbf{Case 1.}]
Let $x_{1}=\frac{y+\sqrt{y^{2}+4t}}{2}$ and $x_{2}=\frac{y-\sqrt{y^{2}+4t}}{2}$, we obtain the results of binomial convolution sums and convolution sums without the binomial symbol involving for sequences bivariate Fibonacci,  bivariate Lucas  in the following results. 

\begin{corollary}\label{cr1}
Let $n$ be a positive integer, we have
\begin{gather*}
\sum_{k=0}^{n}\binom{n}{k}F_{n-k}(y,t)F_{k}(y,t)=\frac{1}{y^{2}+4t}\left(2^{n}L_{n}(y,t)-2y^{n}\right). \label{q}\\
\sum_{k=0}^{n}\binom{n}{k}L_{n-k}(y,t)L_{k}(y,t)=2^{n}F_{n}(y,t)+2y^{n}.\\
\sum_{k=0}^{n}\binom{n}{k}F_{n-k}(y,t)L_{k}(y,t)=2^{n}F_{n}(y,t).
\end{gather*}
\end{corollary}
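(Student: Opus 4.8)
The plan is to derive all three identities as direct specializations of Theorem~\ref{th3}: once the alphabet $\{x_{1},x_{2}\}$ is chosen to be the pair of roots of the Fibonacci--Lucas characteristic equation, the three formulas \eqref{xx}, \eqref{a2} and \eqref{a3} become statements about $F_{n}(y,t)$ and $L_{n}(y,t)$ after applying the appropriate Binet formulas. No new generating-function computation is needed; the entire content is a change of variables.

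First I would record the elementary consequences of the substitution $x_{1}=\frac{y+\sqrt{y^{2}+4t}}{2}$, $x_{2}=\frac{y-\sqrt{y^{2}+4t}}{2}$: one has $x_{1}+x_{2}=y$ and $x_{1}-x_{2}=\sqrt{y^{2}+4t}$, hence $(x_{1}-x_{2})^{2}=y^{2}+4t$ and $x_{1}x_{2}=-t$. The purpose of this step is to confirm that $x_{1},x_{2}$ are exactly the roots $\lambda_{1},\lambda_{2}$ of $\lambda^{2}-y\lambda-t=0$, so that the Binet formulas recalled above apply without modification. Next I would set up the dictionary between the two symmetric functions and the two polynomial families: from $S_{n}(x_{1}+x_{2})=\frac{x_{1}^{n+1}-x_{2}^{n+1}}{x_{1}-x_{2}}$ we obtain $S_{n-1}(x_{1}+x_{2})=\frac{x_{1}^{n}-x_{2}^{n}}{x_{1}-x_{2}}=F_{n}(y,t)$, while $\phi_{n}(x_{1}+x_{2})=x_{1}^{n}+x_{2}^{n}=L_{n}(y,t)$. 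With these replacements, together with $(x_{1}+x_{2})^{n}=y^{n}$ and $\frac{1}{(x_{1}-x_{2})^{2}}=\frac{1}{y^{2}+4t}$, substituting into \eqref{xx} yields the first identity, into \eqref{a2} the second, and into \eqref{a3} the third.

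Since this is a substitution argument, there is no genuine analytical obstacle; the only points requiring care are bookkeeping. The most error-prone of these is the index shift $S_{m-1}(x_{1}+x_{2})\mapsto F_{m}(y,t)$, which must be applied consistently to both convolution factors, so that $S_{n-k-1}\mapsto F_{n-k}$ and $S_{k-1}\mapsto F_{k}$; miscounting here would shift every subscript by one. The second point is simply to ensure that the prefactor is $\frac{1}{(x_{1}-x_{2})^{2}}=\frac{1}{y^{2}+4t}$, and not $\frac{1}{(x_{1}+x_{2})^{2}}$ or $\frac{1}{x_{1}x_{2}}$, which is what pins down the discriminant $y^{2}+4t$ appearing in the first line.
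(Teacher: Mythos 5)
Your approach is exactly the paper's: the paper gives no separate proof of Corollary \ref{cr1}, but presents it as ``Case 1,'' i.e.\ the specialization of Theorem \ref{th3} at $x_{1}=\frac{y+\sqrt{y^{2}+4t}}{2}$, $x_{2}=\frac{y-\sqrt{y^{2}+4t}}{2}$, using precisely your dictionary $S_{n-1}(x_{1}+x_{2})=F_{n}(y,t)$, $\phi_{n}(x_{1}+x_{2})=L_{n}(y,t)$, $x_{1}+x_{2}=y$, $(x_{1}-x_{2})^{2}=y^{2}+4t$; your bookkeeping, including the index shift $S_{n-k-1}\mapsto F_{n-k}$, $S_{k-1}\mapsto F_{k}$, is correct.

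One point you glossed over: substituting into \eqref{a2} does \emph{not} reproduce the second identity as printed. The right-hand side of \eqref{a2} involves $\phi_{n}$, which maps to $L_{n}$, so the specialization reads
\[
\sum_{k=0}^{n}\binom{n}{k}L_{n-k}(y,t)L_{k}(y,t)=2^{n}L_{n}(y,t)+2y^{n},
\]
whereas the corollary as stated has $2^{n}F_{n}(y,t)+2y^{n}$. The printed version is in fact a typo: at $n=1$ the left side equals $L_{1}L_{0}+L_{0}L_{1}=4y$, which agrees with $2L_{1}+2y=4y$ but not with $2F_{1}+2y=2+2y$. So your method proves the corrected statement; the only inaccuracy in your proposal is the assertion that the substitution yields the second identity exactly as it appears in the corollary, when it actually exposes an error there that you should have flagged.
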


\begin{corollary}\label{cr3}
Let $n$ be a positive integer, we have
\begin{gather*}
\sum_{k=0}^{n}F_{n-k}(y,t)F_{k}(y,t)=\frac{1}{y^{2}+4t}\left((n+1)L_{n}(y,t)-2F_{n+1}(y,t)\right).\\
\sum_{k=0}^{n}L_{n-k}(y,t)L_{k}(y,t)=(n+1)L_{n}(y,t)+2F_{n+1}(y,t).\\
\sum_{k=0}^{n}F_{n-k}(y,t)L_{k}(y,t)=(n+1)F_{n}(y,t).
\end{gather*}
\end{corollary}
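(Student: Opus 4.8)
The plan is to derive Corollary \ref{cr3} as a direct specialization of Theorem \ref{th5}, reading the three identities \eqref{nnn}, \eqref{bc} and \eqref{cvv} under the substitution that converts the abstract symmetric functions into bivariate Fibonacci and Lucas polynomials. First I would fix the alphabet by setting $x_{1}=\frac{y+\sqrt{y^{2}+4t}}{2}$ and $x_{2}=\frac{y-\sqrt{y^{2}+4t}}{2}$, which are exactly the roots $\lambda_{1},\lambda_{2}$ of $\lambda^{2}-y\lambda-t=0$. From this choice I record the elementary consequences $x_{1}+x_{2}=y$, $x_{1}x_{2}=-t$ and $x_{1}-x_{2}=\sqrt{y^{2}+4t}$, so that $(x_{1}-x_{2})^{2}=y^{2}+4t$; this last quantity is what will appear in the denominators on the right-hand sides.

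The second step is to set up the dictionary between the two symmetric functions and the two polynomial families. Comparing the definition $S_{m}(x_{1}+x_{2})=\frac{x_{1}^{m+1}-x_{2}^{m+1}}{x_{1}-x_{2}}$ with the Binet formula $F_{n}(y,t)=\frac{\lambda_{1}^{n}-\lambda_{2}^{n}}{\lambda_{1}-\lambda_{2}}$ gives the identification $S_{m}(x_{1}+x_{2})=F_{m+1}(y,t)$; in particular $S_{k-1}=F_{k}$, $S_{n-k-1}=F_{n-k}$, $S_{n-1}=F_{n}$ and $S_{n}=F_{n+1}$. Likewise, comparing $\phi_{m}(x_{1}+x_{2})=x_{1}^{m}+x_{2}^{m}$ with $L_{n}(y,t)=\lambda_{1}^{n}+\lambda_{2}^{n}$ yields $\phi_{m}(x_{1}+x_{2})=L_{m}(y,t)$.

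The third step is purely mechanical substitution: feeding these identifications into \eqref{bc} produces the first claimed identity (with $(x_{1}-x_{2})^{2}$ becoming $y^{2}+4t$), feeding them into \eqref{nnn} produces the second, and feeding them into \eqref{cvv} produces the third. No manipulation of generating functions is needed here, since all of the analytic work was already carried out in the proof of Theorem \ref{th5}.

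I do not expect a genuine obstacle, as the corollary is a specialization rather than a fresh result. The only point demanding care is the bookkeeping of the index shift hard-wired into the symmetric-function notation: the subscript on $S$ sits one below the corresponding Fibonacci subscript, so the term $S_{n}$ on the right of \eqref{nnn} and \eqref{bc} must be read as $F_{n+1}$, not $F_{n}$. Keeping that off-by-one consistent across all three equations is the single place where an error could slip in.
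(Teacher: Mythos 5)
Your proposal is correct and follows exactly the route the paper takes: the paper obtains Corollary \ref{cr3} precisely by specializing Theorem \ref{th5} at $x_{1}=\frac{y+\sqrt{y^{2}+4t}}{2}$, $x_{2}=\frac{y-\sqrt{y^{2}+4t}}{2}$ and translating via the Binet formulas, with the same dictionary $S_{m}(x_{1}+x_{2})=F_{m+1}(y,t)$, $\phi_{m}(x_{1}+x_{2})=L_{m}(y,t)$ that you set up. Your explicit attention to the off-by-one shift ($S_{n}=F_{n+1}$) is exactly the right point of care, and your substitution reproduces all three identities as stated.
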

\begin{itemize}
\item If we put $t=1$ in the corollaries \ref{cr1} and \ref{cr3}, the results for binomial convolution sums and convolution sums without the binomial symbol, involving  Fibonacci and Lucas polynomials, are obtained.
\item If we set $y=t=1$ in the corollaries \ref{cr1} and \ref{cr3}, we derive the results for binomial convolution sums as well as convolution sums without the binomial symbol, involving  Fibonacci and Lucas numbers(see \cite{Lk, Lko}).
\end{itemize}
\item [\textbf{Case 2.}]
Let $x_{1}=3y+\sqrt{9y^{2}-t}$ and $x_{2}=3y-\sqrt{9y^{2}-t}$, then we have we obtain the results of binomial convolution sums of bivariate balancing and  bivariate Lucas-balancing polynomials in the the following  corollary.

\begin{corollary}\label{cr4}
Let $n$ be a positive integer, we have
\begin{gather*}
\sum_{k=0}^{n}\binom{n}{k}B^{*}_{n-k}(y,t)B^{*}_{k}(y,t)=\frac{1}{2(9y^{2}-t)}\left(2^{n}C_{n}(y,t)-(6y)^{n}\right). \label{nb}\\
\sum_{k=0}^{n}\binom{n}{k}2C_{n-k}(y,t)C_{k}(y,t)=2^{n}C_{n}(y,t)+(6y)^{n}.\\
\sum_{k=0}^{n}\binom{n}{k}B^{*}_{n-k}(y,t)C_{k}(y,t)=2^{n-1}B^{*}_{n}(y,t).
\end{gather*}
\end{corollary}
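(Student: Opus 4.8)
The plan is to obtain Corollary~\ref{cr4} as a direct specialization of Theorem~\ref{th3}, taking $x_{1}=3y+\sqrt{9y^{2}-t}$ and $x_{2}=3y-\sqrt{9y^{2}-t}$, i.e.\ the two roots of $\lambda^{2}-6y\lambda+t=0$. First I would record the elementary data attached to this choice: $x_{1}+x_{2}=6y$, $x_{1}x_{2}=t$, and $(x_{1}-x_{2})^{2}=(x_{1}+x_{2})^{2}-4x_{1}x_{2}=36y^{2}-4t=4(9y^{2}-t)$. These three quantities are exactly what appear on the right-hand sides of \eqref{xx}--\eqref{a3} after substitution, so computing them once fixes all of the normalizing constants in the corollary.

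Next I would set up the translation dictionary between the symmetric functions and the balancing polynomials. Comparing the Binet formulas, with this choice of alphabet one has $S_{n-1}(x_{1}+x_{2})=\frac{x_{1}^{n}-x_{2}^{n}}{x_{1}-x_{2}}=\frac{\lambda_{1}^{n}-\lambda_{2}^{n}}{\lambda_{1}-\lambda_{2}}=B^{*}_{n}(y,t)$, while $\phi_{n}(x_{1}+x_{2})=x_{1}^{n}+x_{2}^{n}=\lambda_{1}^{n}+\lambda_{2}^{n}=2C_{n}(y,t)$. The only subtlety here is the factor $2$ sitting in the relation $\phi_{n}=2C_{n}$, which comes from the $\tfrac12$ in the Binet formula for $C_{n}$; keeping track of this factor is where the bookkeeping has to be done carefully.

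With the dictionary in hand, each of the three claimed identities follows by plugging in. In \eqref{xx} the left side becomes $\sum_{k}\binom{n}{k}B^{*}_{n-k}B^{*}_{k}$ and the right side becomes $\frac{1}{4(9y^{2}-t)}\bigl(2^{n}\cdot 2C_{n}-2(6y)^{n}\bigr)$, and the common factor $2$ reduces the constant to $\frac{1}{2(9y^{2}-t)}$, giving the first formula. In \eqref{a2} both $\phi$-factors on the left contribute a $2$, producing an overall $4\sum_{k}\binom{n}{k}C_{n-k}C_{k}$; dividing the resulting relation $4\sum=2\bigl(2^{n}C_{n}+(6y)^{n}\bigr)$ by $2$ yields the stated $\sum_{k}\binom{n}{k}2C_{n-k}C_{k}=2^{n}C_{n}+(6y)^{n}$. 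Finally \eqref{a3} gives $2\sum_{k}\binom{n}{k}B^{*}_{n-k}C_{k}=2^{n}B^{*}_{n}$, i.e.\ the third formula after dividing by $2$.

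The main obstacle is nothing conceptual: the entire content is the correct identification $\phi_{n}=2C_{n}$ together with the normalization $(x_{1}-x_{2})^{2}=4(9y^{2}-t)$, followed by disciplined tracking of the resulting powers of $2$ so that each constant collapses to the form printed in the corollary. I would double-check the first identity in particular, since it combines both the $(x_{1}-x_{2})^{2}$ factor and the $\phi_{n}=2C_{n}$ factor, making it the most error-prone of the three; I would also verify the intended root is $x_{2}=3y-\sqrt{9y^{2}-t}$ (so that $x_{1}\neq x_{2}$ and $x_{1}-x_{2}=2\sqrt{9y^{2}-t}$), since the displayed characteristic-equation data contains a sign slip.
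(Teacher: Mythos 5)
Your proposal is correct and follows exactly the paper's route: the paper obtains Corollary~\ref{cr4} precisely by specializing Theorem~\ref{th3} at $x_{1}=3y+\sqrt{9y^{2}-t}$, $x_{2}=3y-\sqrt{9y^{2}-t}$, with the identifications $S_{n-1}(x_{1}+x_{2})=B^{*}_{n}(y,t)$ and $\phi_{n}(x_{1}+x_{2})=2C_{n}(y,t)$. Your version is in fact more careful than the paper's (which states the substitution without working out the powers of $2$), and you rightly note that the paper's displayed Binet data for $C_{n}$ and $\lambda_{2}$ contain sign slips that must be corrected for the dictionary to hold.
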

 Convolution sums without the binomial symbol, involving bivariate balancing and bivariate Lucas-balancing polynomials, are given by the following corollary.  
\begin{corollary}\label{cr6}
Let $n$ be a positive integer, we have
\begin{gather*}
\sum_{k=0}^{n}B^{*}_{n-k}(y,t)B^{*}_{k}(y,t)=\frac{1}{2(9y^{2}-t)}\left((n+1)C_{n}(y,t)-B^{*}_{n+1}(y,t)\right).\\
\sum_{k=0}^{n}2C_{n-k}(y,t)C_{k}(y,t)=(n+1)C_{n}(y,t)+B^{*}_{n+1}(x,y).\\
\sum_{k=0}^{n}2B^{*}_{n-k}(y,t)C_{k}(y,t)=(n+1)B^{*}_{n}(y,t).
\end{gather*}
\end{corollary}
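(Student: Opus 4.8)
The plan is to obtain Corollary \ref{cr6} as the Case 2 specialization of Theorem \ref{th5}, in exactly the way Corollary \ref{cr4} specializes Theorem \ref{th3}. First I would fix the alphabet $A=\{x_{1},x_{2}\}$ with $x_{1}=3y+\sqrt{9y^{2}-t}$ and $x_{2}=3y-\sqrt{9y^{2}-t}$, the two roots of $\lambda^{2}-6y\lambda+t=0$, so that $x_{1}+x_{2}=6y$, $x_{1}x_{2}=t$, and consequently $(x_{1}-x_{2})^{2}=(x_{1}+x_{2})^{2}-4x_{1}x_{2}=36y^{2}-4t=4(9y^{2}-t)$. This last computation is what converts the prefactor $\tfrac{1}{(x_{1}-x_{2})^{2}}$ of Theorem \ref{th5} into the $\tfrac{1}{2(9y^{2}-t)}$ appearing in the corollary.

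Next I would record the translation dictionary between the symmetric functions and the balancing polynomials, read off from the Binet formulas. From $S_{n-1}(x_{1}+x_{2})=\frac{x_{1}^{n}-x_{2}^{n}}{x_{1}-x_{2}}$ and $B^{*}_{n}(y,t)=\frac{\lambda_{1}^{n}-\lambda_{2}^{n}}{\lambda_{1}-\lambda_{2}}$ we get $S_{n-1}(x_{1}+x_{2})=B^{*}_{n}(y,t)$, hence after an index shift $S_{n}(x_{1}+x_{2})=B^{*}_{n+1}(y,t)$. From $\phi_{n}(x_{1}+x_{2})=x_{1}^{n}+x_{2}^{n}$ and the Binet formula $C_{n}(y,t)=\tfrac12(\lambda_{1}^{n}+\lambda_{2}^{n})$ we get $\phi_{n}(x_{1}+x_{2})=2C_{n}(y,t)$.

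With this dictionary in hand, each of the three identities of Theorem \ref{th5} becomes one line of Corollary \ref{cr6} after substitution. Putting $\phi=2C$ and $S_{n}=B^{*}_{n+1}$ into \eqref{nnn} gives $4\sum_{k=0}^{n}C_{k}C_{n-k}=2(n+1)C_{n}+2B^{*}_{n+1}$, i.e. the middle relation after dividing by $2$. Putting $S_{\cdot-1}=B^{*}$ together with $(x_{1}-x_{2})^{2}=4(9y^{2}-t)$ into \eqref{bc} produces the first relation, and inserting $\phi=2C$, $S_{\cdot-1}=B^{*}$ into the mixed identity \eqref{cvv} produces the third relation $2\sum_{k=0}^{n}B^{*}_{n-k}C_{k}=(n+1)B^{*}_{n}$.

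There is no genuine analytic obstacle here: all the content already resides in Theorem \ref{th5}, whose proof proceeds purely from the definitions of $S_{n-1}$ and $\phi_{n}$. The only points requiring care are bookkeeping: tracking the index shift $S_{n}(x_{1}+x_{2})\mapsto B^{*}_{n+1}(y,t)$, and keeping track of the overall factor of $2$ introduced by $\phi_{n}=2C_{n}$, which is exactly what accounts for the leading coefficients $2$ on the $C$-convolutions and the $\tfrac12$ prefactors on the $B^{*}$-convolutions. I would also note for the record that the displayed Binet formula for $C_{n}$ should read $\tfrac12(\lambda_{1}^{n}+\lambda_{2}^{n})$ rather than with a minus sign, as is forced by $C_{0}(y,t)=1$ and $C_{1}(y,t)=3y$.
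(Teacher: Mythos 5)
Your proposal is correct and is essentially identical to the paper's own (implicit) proof: the paper obtains Corollary \ref{cr6} precisely as the Case 2 specialization $x_{1}=3y+\sqrt{9y^{2}-t}$, $x_{2}=3y-\sqrt{9y^{2}-t}$ of Theorem \ref{th5}, using the same dictionary $S_{n-1}(x_{1}+x_{2})=B^{*}_{n}(y,t)$, $\phi_{n}(x_{1}+x_{2})=2C_{n}(y,t)$ and the computation $(x_{1}-x_{2})^{2}=4(9y^{2}-t)$. Your side remark is also right: the paper's displayed Binet formula for $C_{n}(y,t)$ (and its formula for $\lambda_{2}$) contains sign typos, and the corrected form $C_{n}(y,t)=\tfrac{1}{2}(\lambda_{1}^{n}+\lambda_{2}^{n})$ is what the corollary actually requires.
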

\begin{itemize}
\item If we put $t=1$ in the corollaries \ref{cr4} and \ref{cr6}, the results for binomial convolution sums and convolution sums without the binomial symbol, involving  balancing and Lucas-balancing polynomials, are obtained.
\item If we are setting $y=t=1$ in the corollaries \ref{cr4} and \ref{cr6}, we derive the results for binomial convolution sums as well as convolution sums without the binomial symbol, involving  balancing and Lucas-balancing numbers.
\end{itemize}
\end{enumerate}
\section{\textbf{New convolution of bivariate polynomials and known numbers }}
By using the Bernoulli  Euler and Genochi numbers and the
symmetric functions we prove some new theorems and we give some
applications.

\subsection{\textbf{New theorems}}

In this part, we are in a position to provide some new theorems.
\begin{theorem}\label{th6}
Given an alphabet $A=\{x_{1},x_{2}\}$, then for any positive integer $n$, we
have
\begin{equation}
\sum_{\underset{n\equiv k[2]}{k=0}}^{n}\binom{n}{k}(x_{1}-x_{2})^{n-k}\left(2^{k}\phi_{k}(x_{1}+x_{2})+2(x_{1}+x_{2})^{k}\right)G_{n-k}=-n2^{n-1}(x_{1}-x_{2})^{2}S_{n-2}(x_{1}+x_{2}). \label{qxa}
\end{equation}
\end{theorem}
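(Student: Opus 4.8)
The plan is to read both sides of \eqref{qxa} as coefficients of $\tfrac{z^{n}}{n!}$ in suitable exponential generating functions, exactly as in the proofs of \thmref{th3} and \thmref{th5}. First I would record that, by squaring \eqref{l}, the sequence $c_{k}:=2^{k}\phi_{k}(x_{1}+x_{2})+2(x_{1}+x_{2})^{k}$ has exponential generating function $\bigl(\exp(x_{1}z)+\exp(x_{2}z)\bigr)^{2}$; this is precisely \eqref{a2} read in reverse. Apart from the parity restriction, the left-hand side of \eqref{qxa} is then the coefficient of $\tfrac{z^{n}}{n!}$ in the Cauchy product of this series with the Genocchi series evaluated at the scaled argument $(x_{1}-x_{2})z$, namely $\tfrac{2(x_{1}-x_{2})z}{\exp((x_{1}-x_{2})z)+1}$, since the factor $(x_{1}-x_{2})^{n-k}$ is exactly what the substitution $z\mapsto(x_{1}-x_{2})z$ produces in \eqref{116}.

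The crucial point is the congruence $n\equiv k\ [2]$, i.e. $n-k$ even, which forces the Genocchi factor to be replaced by its even part. Writing $G(w)=\tfrac{2w}{\exp(w)+1}$ and $w=(x_{1}-x_{2})z$, I would compute
\[
\tfrac12\bigl(G(w)+G(-w)\bigr)=-(x_{1}-x_{2})z\,\frac{\exp((x_{1}-x_{2})z)-1}{\exp((x_{1}-x_{2})z)+1}=-(x_{1}-x_{2})z\tanh\!\Bigl(\tfrac{(x_{1}-x_{2})z}{2}\Bigr).
\]
Equivalently, since $G_{1}=1$ is the only nonvanishing odd-index Genocchi number, the restriction merely deletes the lone $n-k=1$ term from the full convolution, so one may instead compute the unrestricted convolution and subtract that single term. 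Either way, the left-hand side of \eqref{qxa} equals the coefficient of $\tfrac{z^{n}}{n!}$ in $\bigl(\exp(x_{1}z)+\exp(x_{2}z)\bigr)^{2}\cdot\bigl(-(x_{1}-x_{2})z\tanh(\tfrac{(x_{1}-x_{2})z}{2})\bigr)$.

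Next I would collapse this product. Using $\exp(x_{1}z)+\exp(x_{2}z)=2\exp\!\bigl(\tfrac{(x_{1}+x_{2})z}{2}\bigr)\cosh\!\bigl(\tfrac{(x_{1}-x_{2})z}{2}\bigr)$, the $\cosh^{2}$ combines with $\tanh$ into $\cosh\cdot\sinh=\tfrac12\sinh$, and after simplification the product becomes
\[
-(x_{1}-x_{2})z\bigl(\exp(2x_{1}z)-\exp(2x_{2}z)\bigr).
\]
Finally I would match this with the right-hand side. From \eqref{kl}, multiplying by $z$ and reindexing shows that $n\,S_{n-2}(x_{1}+x_{2})$ has exponential generating function $\tfrac{z}{x_{1}-x_{2}}\bigl(\exp(x_{1}z)-\exp(x_{2}z)\bigr)$; replacing $z\mapsto 2z$ and halving produces the factor $2^{n-1}$, and multiplying by $-(x_{1}-x_{2})^{2}$ yields exactly $-(x_{1}-x_{2})z\bigl(\exp(2x_{1}z)-\exp(2x_{2}z)\bigr)$. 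Comparing the coefficients of $\tfrac{z^{n}}{n!}$ on the two series then gives \eqref{qxa}.

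I expect the main obstacle to be the bookkeeping of the parity restriction: one must verify that passing to the even part of the scaled Genocchi series (or, equivalently, excising the single $G_{1}$ term) is precisely what the congruence $n\equiv k\ [2]$ encodes, and that this even part simplifies to the clean $\tanh$ form above. Once that is settled, the hyperbolic collapse and the index shift for $S_{n-2}$ are routine, and a check at $n=1,2$ (both sides vanish at $n=1$ and equal $-4(x_{1}-x_{2})^{2}$ at $n=2$) confirms the normalization.
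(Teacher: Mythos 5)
Your proposal is correct, and its skeleton---multiplying the square of the exponential generating function \eqref{l} (expanded via \eqref{a2}) by the Genocchi series \eqref{116} at the scaled argument $(x_{1}-x_{2})z$, then comparing coefficients of $\tfrac{z^{n}}{n!}$---is exactly the paper's. Where you genuinely diverge is in how the parity restriction $n\equiv k\,[2]$ is handled. The paper first derives the \emph{unrestricted} convolution identity
\begin{equation*}
\sum_{k=0}^{n}\binom{n}{k}(x_{1}-x_{2})^{n-k}\left(2^{k}\phi_{k}(x_{1}+x_{2})+2(x_{1}+x_{2})^{k}\right)G_{n-k}
=2n(x_{1}-x_{2})\left((x_{1}+x_{2})^{n-1}+(2x_{2})^{n-1}\right),
\end{equation*}
obtained by cancelling one factor $\exp((x_{1}-x_{2})z)+1$ against the Genocchi denominator, and only afterwards imposes the restriction at the coefficient level: since $G_{2k+1}=0$ for $k\geq 1$, the restricted sum differs from the full one only by the $k=n-1$ term (where $G_{1}=1$), and the leftover $n2^{n-1}(x_{1}-x_{2})\left(2x_{2}^{n-1}-\phi_{n-1}(x_{1}+x_{2})\right)$ is converted to the stated right-hand side by Lemma identity \eqref{kll}. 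You instead symmetrize at the start: you replace the scaled Genocchi series by its even part $-w\tanh(w/2)$, so the congruence is encoded in the generating function itself, collapse the product hyperbolically to $-(x_{1}-x_{2})z\left(\exp(2x_{1}z)-\exp(2x_{2}z)\right)$, and recognize this as the exponential generating function of $-n2^{n-1}(x_{1}-x_{2})^{2}S_{n-2}(x_{1}+x_{2})$ via a shift and rescaling of \eqref{kl}. Both routes are sound, and you correctly note their equivalence (excising the lone $G_{1}$ term is the paper's move); the paper's version produces the unrestricted identity as a by-product and needs \eqref{kll}, while yours avoids the correction-term bookkeeping entirely at the price of the hyperbolic manipulation and the EGF identification of the right-hand side. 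Your checks at $n=1$ (both sides vanish) and $n=2$ (both equal $-4(x_{1}-x_{2})^{2}$, using $G_{2}=-1$) are also accurate.
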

\begin{proof}
We use the change of variable $z=(x_{1}-x_{2})z$ in 
\eqref{116}, we obtain
\begin{equation}
\sum_{n=0}^{\infty}(x_{1}-x_{2})^{n}G_{n}\frac{z^{n}}{n!}=\frac{2(x_{1}-x_{2})z}{\exp((x_{1}-x_{2})z+1)}. \label{d}
\end{equation}
Multiply relation \eqref{d} by the square of the exponential generating function associated with the symmetric function $\phi_{k}(x_{1}+x_{2})$, we get 
\begin{align*}
&\left(\sum_{k=0}^{\infty}\phi_{k}(x_{1}+x_{2})\frac{z^{k}}{k!}\right)^{2}\left(\sum_{n=0}^{\infty}(x_{1}-x_{2})^{n}G_{n}\frac{z^{n}}{n!}\right)\\
&=\left(\sum_{k=0}^{\infty}\left(2^{k}\phi_{k}(x_{1}+x_{2})+2(x_{1}+x_{2})^{k}\right) \frac{z^{k}}{k!}\right)\left(\sum_{n=0}^{\infty}(x_{1}-x_{2})^{n}G_{n}\frac{z^{n}}{n!}\right)\\
&=\sum_{n=0}^{\infty}\sum_{k=0}^{n}\binom{n}{k}(x_{1}-x_{2})^{n-k}\left(2^{k}\phi_{k}(x_{1}+x_{2})+2(x_{1}+x_{2})^{k}\right)G_{n-k}\frac{z^{n}}{n!},
\end{align*}
and
\begin{align*}
\exp(2x_{2}z)(\exp((x_{1}-x_{2})z)+1)^{2}\frac{2(x_{1}-x_{2})z}{\exp((x_{1}-x_{2})z)+1}&=2(x_{1}-x_{2})z(\exp((x_{1}+x_{2})z)+\exp(2x_{2}z))\\
&=\sum_{n=0}^{\infty}2n(x_{1}-x_{2})\left((x_{1}+x_{2})^{n-1}+(2x_{2})^{n-1}\right)\frac{z^{n}}{n!}.
\end{align*}
By comparing the coefficients of 
 $\frac{z^{n}}{n!}$, we obtain
\begin{equation*}
\sum_{k=0}^{n}\binom{n}{k}(x_{1}-x_{2})^{n-k}\left(2^{k}\phi_{k}(x_{1}+x_{2})+2(x_{1}+x_{2})^{k}\right)G_{n-k}=2n(x_{1}-x_{2})\left((x_{1}+x_{2})^{n-1}+(2x_{2})^{n-1}\right),
\end{equation*}
We know that $G_{0}=0$, $G_{1}=1$ and for $k\geqslant 0$, $G_{2k+1}=0$. By applying the equation  \eqref{kll}, we obtain
\begin{equation*}
\sum_{\underset{n\equiv k[2]}{k=0}}^{n}\binom{n}{k}(x_{1}-x_{2})^{n-k}\left(2^{k}\phi_{k}(x_{1}+x_{2})+2(x_{1}+x_{2})^{k}\right)G_{n-k}=-n2^{n-1}(x_{1}-x_{2})^{2}S_{n-2}(x_{1}+x_{2}).
\end{equation*}

The proof is completed.
\end{proof}
\begin{theorem}
Given an alphabet $A=\{x_{1},x_{2}\}$, then for any positive integer $n$, we
have
\begin{equation}
\sum_{k=0}^{n}\binom{n}{k}(x_{1}-x_{2})^{n-k}(n-k-1)\left(2^{k}\phi_{k}(x_{1}+x_{2})+2(x_{1}+x_{2})^{k}\right)G_{n-k}=-2n(n-1)(x_{1}-x_{2})^{2}(x_{1}+x_{2})^{n-2}.\label{qxaa1}
\end{equation}
\end{theorem}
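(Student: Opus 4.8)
The plan is to mirror the exponential-generating-function argument used for Theorem~\ref{th6}, the only new feature being the extra weight $(n-k-1)$, which is attached to the Genocchi index $n-k$. Since the operator $g\mapsto zg'(z)-g(z)$ multiplies the $j$-th coefficient of an EGF $g(z)=\sum_j c_j\frac{z^j}{j!}$ by $(j-1)$ (because $zg'(z)=\sum_j j\,c_j\frac{z^j}{j!}$), my first step is to start from the rescaled Genocchi generating function $g(z)=\sum_{n\ge 0}(x_1-x_2)^n G_n \frac{z^n}{n!}=\frac{2(x_1-x_2)z}{\exp((x_1-x_2)z)+1}$, obtained from \eqref{116} by the substitution $z\mapsto(x_1-x_2)z$, and to form $\widetilde g(z):=zg'(z)-g(z)=\sum_{n\ge 0}(n-1)(x_1-x_2)^n G_n\frac{z^n}{n!}$. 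The $z^n/n!$-coefficient of $\widetilde g$ carries precisely the factor $(n-k-1)$ that will land on the Genocchi term after convolution.

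The crucial computation is to differentiate $g$ and simplify $zg'(z)-g(z)$ in closed form. A direct quotient-rule calculation collapses the two pieces and yields the clean expression $\widetilde g(z)=-\dfrac{2(x_1-x_2)^2 z^2\exp((x_1-x_2)z)}{\bigl(\exp((x_1-x_2)z)+1\bigr)^{2}}$. The point of writing it this way is that the denominator is exactly the square of $\exp((x_1-x_2)z)+1$. This is the step I expect to be the main obstacle: one must check that the $zg'$ and $-g$ contributions cancel so as to leave a perfect square in the denominator, since it is this cancellation, rather than any combinatorial identity, that makes the whole formula close up.

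Next I would multiply $\widetilde g(z)$ by the square of the generating function of $\phi_k(x_1+x_2)$. By \eqref{l} and \eqref{a2} this square equals $\exp(2x_2 z)\bigl(\exp((x_1-x_2)z)+1\bigr)^{2}=\sum_{k\ge0}\bigl(2^{k}\phi_k(x_1+x_2)+2(x_1+x_2)^k\bigr)\frac{z^k}{k!}$, so the factor $\bigl(\exp((x_1-x_2)z)+1\bigr)^2$ cancels against the denominator of $\widetilde g$, leaving simply $-2(x_1-x_2)^2 z^2\exp((x_1+x_2)z)$. Reading off the coefficient of $z^n/n!$ on both sides then finishes the proof: the product side produces exactly $\sum_{k=0}^n\binom{n}{k}(x_1-x_2)^{n-k}(n-k-1)\bigl(2^k\phi_k(x_1+x_2)+2(x_1+x_2)^k\bigr)G_{n-k}$, while expanding $z^2\exp((x_1+x_2)z)$ gives coefficient $n(n-1)(x_1+x_2)^{n-2}$, so the right-hand side is $-2n(n-1)(x_1-x_2)^2(x_1+x_2)^{n-2}$, as claimed. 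No parity restriction is needed here, because the weight $(n-k-1)$ already annihilates the $n-k=1$ term and $G_0=0$ annihilates $n-k=0$, while $G_{2j+1}=0$ for $j\ge1$ removes the remaining odd terms automatically.
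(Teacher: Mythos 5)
Your proposal is correct and takes essentially the same route as the paper: your operator $zg'(z)-g(z)$ is exactly the paper's ``divide by $z$, differentiate, then multiply by $z^{2}$,'' and both arrive at the same closed form $\widetilde g(z)=-2(x_{1}-x_{2})^{2}z^{2}\exp((x_{1}-x_{2})z)/(\exp((x_{1}-x_{2})z)+1)^{2}$. From there the multiplication by $\exp(2x_{2}z)(\exp((x_{1}-x_{2})z)+1)^{2}$, the cancellation of the squared denominator, and the comparison of coefficients of $z^{n}/n!$ coincide with the paper's argument step for step.
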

\begin{proof}
We divide both sides of the equation \eqref{d} by $z$ then we differentiate with respect to $z$, we have
\begin{equation*}
\sum_{n=0}^{\infty}(n-1)(x_{1}-x_{2})^{n}G_{n}\frac{z^{n-2}}{n!}=\frac{-2(x_{1}-x_{2})^{2}\exp((x_{1}-x_{2})z)}{(\exp((x_{1}-x_{2})z)+1)^{2}}.
\end{equation*}
We multiply both sides by $z^{2}$, We get
\begin{equation}
\sum_{n=0}^{\infty}(n-1)(x_{1}-x_{2})^{n}G_{n}\frac{z^{n}}{n!}=\frac{-2(x_{1}-x_{2})^{2}z^{2}\exp((x_{1}-x_{2})z)}{(\exp((x_{1}-x_{2})z)+1)^{2}}.\label{ss3}
\end{equation}
Multiply the equation \eqref{ss3} by the square of the exponential generating function related to the symmetric function $\phi_{k}(x_{1}+x_{2})$, we derive
\begin{align*}
&\left(\sum_{k=0}^{\infty}\phi_{k}(x_{1}+x_{2})\frac{z^{k}}{k!}\right)^{2}\left(\sum_{n=0}^{\infty}(n-1)(x_{1}-x_{2})^{n}G_{n}\frac{z^{n}}{n!}\right)\\
&=\left(\sum_{k=0}^{\infty}\left(2^{k}\phi_{k}(x_{1}+x_{2})+2(x_{1}+x_{2})^{k}\right) \frac{z^{k}}{k!}\right)\left(\sum_{n=0}^{\infty}(n-1)(x_{1}-x_{2})^{n}G_{n}\frac{z^{n}}{n!}\right)\\
&=\sum_{n=0}^{\infty}\sum_{k=0}^{n}\binom{n}{k}(x_{1}-x_{2})^{n-k}(n-k-1)\left(2^{k}\phi_{k}(x_{1}+x_{2})+2(x_{1}+x_{2})^{k}\right)G_{n-k}\frac{z^{n}}{n!},
\end{align*}
On the other side, we have
\begin{align*}
\exp(2x_{2}z)(\exp((x_{1}-x_{2})z)+1)^{2}\frac{-2(x_{1}-x_{2})^{2}z^{2}\exp((x_{1}-x_{2})z)}{(\exp((x_{1}-x_{2})z)+1)^{2}}&=-2(x_{1}-x_{2})^{2}z^{2}\exp((x_{1}+x_{2})z)\\
&=\sum_{n=0}^{\infty}-2n(n-1)(x_{1}-x_{2})^{2}(x_{1}+x_{2})^{n-2}\frac{z^{n}}{n!}.
\end{align*}
Comparing of the coefficients of $\frac{z^{n}}{n!}$, we obtain the desired result.
\end{proof}
\begin{theorem}
Given an alphabet $A=\{x_{1},x_{2}\}$, then for any positive integer $n$, we
have
\begin{equation}
\sum_{\underset{n\equiv k[2]}{k=0}}^{m}\binom{n}{k}(x_{1}-x_{2})^{m-k}\frac{G_{m-k-2}}{m-k-2}\left(2^{k}\phi_{k}(x_{1}+x_{2})+2(x_{1}+x_{2})^{k}\right)=-2(x_{1}-x_{2})^{2}(x_{1}+x_{2})^{m}.\label{56}
\end{equation}
\end{theorem}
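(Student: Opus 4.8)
The plan is to reproduce the exponential generating function scheme of the two preceding theorems, this time starting from the twice-processed Genocchi series \eqref{ss3}. Reading the right-hand side $-2(x_{1}-x_{2})^{2}(x_{1}+x_{2})^{m}$ backwards, I first identify the target: the whole left-hand side must be the coefficient of $z^{m}/m!$ in the product generating function $-2(x_{1}-x_{2})^{2}\exp((x_{1}+x_{2})z)$. Thus the proof reduces to writing this product as (a Genocchi building block) $\times$ (the square of the $\phi$-series) and then comparing coefficients.

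First I would isolate the building block by dividing \eqref{ss3} through by $z^{2}$, giving
\begin{equation*}
B(z):=\frac{-2(x_{1}-x_{2})^{2}\exp((x_{1}-x_{2})z)}{\left(\exp((x_{1}-x_{2})z)+1\right)^{2}}=\sum_{n=0}^{\infty}(n-1)(x_{1}-x_{2})^{n}G_{n}\frac{z^{n-2}}{n!}.
\end{equation*}
Reindexing by $j=n-2$ and cancelling the differentiation factor $(j+1)$ against the same factor inside $(j+2)!$ leaves the coefficient of $z^{j}/j!$ equal to $(x_{1}-x_{2})^{j+2}G_{j+2}/(j+2)$; this is the weight $G_{m-k\pm2}/(m-k\pm2)$ carried by the sum in the statement, and here I would check that the index shift in the printed formula agrees with the convergent expansion of $B(z)$ (the series only involves $G$ at nonnegative indices, so the clean form is $G_{m-k+2}/(m-k+2)$).

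Next I would multiply $B(z)$ by the square of the generating function \eqref{l} of $\phi_{k}(x_{1}+x_{2})$. The decisive simplification is the factorisation
\begin{equation*}
\left(\exp(x_{1}z)+\exp(x_{2}z)\right)^{2}=\exp(2x_{2}z)\left(\exp((x_{1}-x_{2})z)+1\right)^{2},
\end{equation*}
whose coefficient of $z^{k}/k!$ is $2^{k}\phi_{k}(x_{1}+x_{2})+2(x_{1}+x_{2})^{k}$ by \eqref{a2}. The bracket $\left(\exp((x_{1}-x_{2})z)+1\right)^{2}$ cancels the denominator of $B(z)$ exactly, leaving
\begin{equation*}
-2(x_{1}-x_{2})^{2}\exp(2x_{2}z)\exp((x_{1}-x_{2})z)=-2(x_{1}-x_{2})^{2}\exp((x_{1}+x_{2})z),
\end{equation*}
which is precisely the target product. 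Comparing coefficients of $z^{m}/m!$ via the Cauchy product then yields the identity, and the sum collapses to indices with $k\equiv m\pmod 2$ because $G_{2j+1}=0$ for $j\ge1$ and $G_{0}=0$, so the building-block coefficients at odd indices vanish.

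The main obstacle is the index bookkeeping that produces the factor $1/(m-k+2)$: it is not created by a single operation but by the interplay of the shift $z^{n}\mapsto z^{n-2}$, the differentiation factor $(n-1)$ inherited from \eqref{ss3}, and the factorial ratio $m!/((m-k+2)!\,k!)$ coming from the Cauchy product. I would track all three at once, simplifying $\binom{m}{k}(m-k+1)\,m!/((m-k+2)!\,k!)$ down to $\binom{m}{k}/(m-k+2)$ before reading off the weight, and take care that the $\pm2$ shift printed in the statement matches this computation. Once this weight is pinned down, the cancellation of $\exp(2x_{2}z)$ and the parity restriction are routine.
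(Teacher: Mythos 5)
Your proposal is correct, but it follows a genuinely different route from the paper's. The paper deduces this theorem from the preceding one, \eqref{qxaa1}: it substitutes $n=m+2$ there, invokes the identity $\binom{m+2}{k}(m+1-k)=\binom{m}{k}\frac{(m+1)(m+2)}{m-k+2}$, relies on the vanishing of the boundary terms $k=m+1$ and $k=m+2$ (the first because the factor $n-k-1$ is zero there, the second because $G_{0}=0$), and divides through by $(m+1)(m+2)$. You instead go back to the generating function \eqref{ss3}, divide it by $z^{2}$, reindex so that the coefficient of $z^{j}/j!$ becomes $(x_{1}-x_{2})^{j+2}G_{j+2}/(j+2)$, and then repeat the Cauchy-product argument that proved \eqref{qxaa1}; the division by $z^{2}$ at the series level is exactly the shift $n=m+2$ at the coefficient level, so the two proofs are morally equivalent, but yours is self-contained and avoids both the binomial identity and the boundary-term discussion, since those cancellations happen automatically inside the series (the terms $n=0,1$ of \eqref{ss3} vanish because $G_{0}=0$ and the factor $n-1$ kills $n=1$). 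Your route also makes transparent what the paper's statement obscures through typos: the correct weight is $G_{m-k+2}/(m-k+2)$ attached to $(x_{1}-x_{2})^{m-k+2}$ and $\binom{m}{k}$, which both proofs silently install in place of the printed $G_{m-k-2}/(m-k-2)$, $(x_{1}-x_{2})^{m-k}$ and $\binom{n}{k}$. One small slip in your closing remark: the quantity to simplify is $(m-k+1)\,m!/\bigl((m-k+2)!\,k!\bigr)=\binom{m}{k}/(m-k+2)$; prefixing it with an extra $\binom{m}{k}$, as you wrote, double-counts that factor, though the actual derivation earlier in your argument handles it correctly.
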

\begin{proof}
When putting
$n=m+2$,
we remark
$$\binom{n}{k}(n-k-1)=\binom{m}{k}\frac{(m+1)(m+2)}{m-k+2}.$$

Then switching the order of the two opposite terms, 
$k=m+1$,
$k=m+2$
on the left side of the relation 
\eqref{qxaa1}, by simplifying, we arrive at the desired result.
\end{proof}

\begin{theorem}
Given an alphabet $A=\{x_{1},x_{2}\}$, then for any positive integer $n$, we
have
\begin{gather}
\sum_{\underset{n\equiv k[2]}{k=0}}^{n}\binom{n}{k}(x_{1}-x_{2})^{n-k}\left(2^{k}\phi_{k}(x_{1}+x_{2})-2(x_{1}+x_{2})^{k}\right)B_{n-k}=n2^{n-2}(x_{1}-x_{2})^{2}S_{n-2}(x_{1}+x_{2}).  \\
\sum_{\underset{n\equiv k[2]}{k=0}}^{n}\binom{n}{k}(x_{1}-x_{2})^{n-k}(1-2^{n-k})\left(2^{k}\phi_{k}(x_{1}+x_{2})+2(x_{1}+x_{2})^{k}\right)B_{n-k}=-n2^{n-2}(x_{1}-x_{2})^{2}S_{n-2}(x_{1}+x_{2}).\label{a98a}
\end{gather}
\end{theorem}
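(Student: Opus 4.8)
The plan is to follow the generating-function template already used in the proof of Theorem~\ref{th6}, replacing the Genocchi generating function by the Bernoulli one \eqref{15} and selecting, for each of the two identities, the appropriate square of a symmetric-function generating function. First I would apply the substitution $z\mapsto(x_{1}-x_{2})z$ to \eqref{15}, obtaining
\[
\sum_{n=0}^{\infty}(x_{1}-x_{2})^{n}B_{n}\frac{z^{n}}{n!}=\frac{(x_{1}-x_{2})z}{\exp((x_{1}-x_{2})z)-1}.
\]
The factor $2^{k}\phi_{k}(x_{1}+x_{2})-2(x_{1}+x_{2})^{k}$ in the first identity is precisely the coefficient sequence of $(\exp(x_{1}z)-\exp(x_{2}z))^{2}$ (this is the computation already carried out to prove \eqref{xx}), while the factor $2^{k}\phi_{k}(x_{1}+x_{2})+2(x_{1}+x_{2})^{k}$ in the second identity is the coefficient sequence of $(\exp(x_{1}z)+\exp(x_{2}z))^{2}$. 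Multiplying the displayed Bernoulli series by the appropriate square and reading off the coefficient of $z^{n}/n!$ produces, through the Cauchy product, the convolution on the left summed over all $k$.

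For the first identity the decisive simplification is $(\exp(x_{1}z)-\exp(x_{2}z))^{2}=\exp(2x_{2}z)(\exp((x_{1}-x_{2})z)-1)^{2}$, so that one factor $\exp((x_{1}-x_{2})z)-1$ cancels the Bernoulli denominator and leaves the elementary product $(x_{1}-x_{2})z\,(\exp((x_{1}+x_{2})z)-\exp(2x_{2}z))$; extracting coefficients yields the unrestricted identity with right-hand side $n(x_{1}-x_{2})\big((x_{1}+x_{2})^{n-1}-(2x_{2})^{n-1}\big)$. Now I would invoke $B_{0}=1$, $B_{1}=-\tfrac12$ and $B_{2m+1}=0$ for $m\geq 1$: the only surviving odd-index Bernoulli number is $B_{1}$, so passing from the full sum to the restricted sum over $n\equiv k\,[2]$ amounts to subtracting the single $k=n-1$ term. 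Transferring that term to the right and simplifying, the $(x_{1}+x_{2})^{n-1}$ contributions cancel and the remainder collapses to $n2^{n-2}(x_{1}-x_{2})\big(x_{1}^{n-1}-x_{2}^{n-1}\big)$; rewriting $x_{1}^{n-1}-x_{2}^{n-1}=(x_{1}-x_{2})S_{n-2}(x_{1}+x_{2})$, equivalently via \eqref{kll}, gives the stated right-hand side.

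The second identity is treated the same way, but it is cleanest to first record the classical relation
\[
\sum_{n=0}^{\infty}(1-2^{n})B_{n}\frac{z^{n}}{n!}=\frac{z}{\exp(z)-1}-\frac{2z}{\exp(2z)-1}=\frac{z}{\exp(z)+1},
\]
which identifies $(1-2^{n})B_{n}=\tfrac12 G_{n}$. Consequently the left-hand side of the second identity is exactly one half of the left-hand side of \eqref{qxa}, and the result follows at once from Theorem~\ref{th6}; alternatively one runs the direct computation, multiplying $\frac{(x_{1}-x_{2})z}{\exp((x_{1}-x_{2})z)+1}$ by $(\exp(x_{1}z)+\exp(x_{2}z))^{2}$ and noting that here the $k=n$ term disappears automatically because $1-2^{0}=0$. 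The one delicate point, which I expect to be the main obstacle, is the bookkeeping of the single nonzero odd Bernoulli number $B_{1}$ together with the final algebraic collapse: one must verify that the $(x_{1}+x_{2})^{n-1}$ and $\phi_{n-1}(x_{1}+x_{2})$ terms combine to leave exactly $\pm(x_{1}^{n-1}-x_{2}^{n-1})$, since any slip in the constant $2^{n-2}$ or in a sign would destroy the clean $S_{n-2}(x_{1}+x_{2})$ form.
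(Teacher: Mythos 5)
Your proposal is correct and follows essentially the same route as the paper: substitute $z\mapsto(x_{1}-x_{2})z$ in \eqref{15}, multiply by the square of the relevant symmetric-function generating function (your $(\exp(x_{1}z)-\exp(x_{2}z))^{2}$ is just $(x_{1}-x_{2})^{2}$ times the square of \eqref{kl} used in the paper), isolate the lone $B_{1}$ term to pass to the parity-restricted sum, and obtain the second identity from Theorem~\ref{th6} via $G_{n}=2(1-2^{n})B_{n}$ — exactly the paper's argument. Your write-up is in fact more explicit than the paper's about the $B_{1}$ bookkeeping and the collapse to $x_{1}^{n-1}-x_{2}^{n-1}$, which the paper compresses into the phrase ``by applying the relationship \eqref{kll}.''
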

\begin{proof}
We use the change of variable $z=(x_{1}-x_{2})z$ in 
\eqref{15}, we obtain
\begin{equation}
\sum_{n=0}^{\infty}(x_{1}-x_{2})^{n}B_{n}\frac{z^{n}}{n!}=\frac{(x_{1}-x_{2})z}{\exp ((x_{1}-x_{2})z)-1}. \label{g}
\end{equation}

Take the square of the exponential generating function associated with the symmetric function $S_{k-1}(x_{1}+x_{2})$ and multiply it by relation \eqref{g}. You will arrive at

\begin{align*}
&\left(\sum_{k=0}^{\infty}S_{k-1}(x_{1}+x_{2})\frac{z^{k}}{k!}\right)^{2}\left(\sum_{n=0}^{\infty}(x_{1}-x_{2})^{n}B_{n}\frac{z^{n}}{n!}\right)\\
&=\left(\sum_{k=0}^{\infty}\frac{1}{(x_{1}-x_{2})^{2}}\left(2^{k}\phi_{k}(x_{1}+x_{2})-2(x_{1}+x_{2})^{k}\right) \frac{z^{k}}{k!}\right)\left(\sum_{n=0}^{\infty}(x_{1}-x_{2})^{n}B_{n}\frac{z^{n}}{n!}\right)\\
&=\sum_{n=0}^{\infty}\sum_{k=0}^{n}\binom{n}{k}(x_{1}-x_{2})^{n-k-2}\left(2^{k}\phi_{k}(x_{1}+x_{2})-2(x_{1}+x_{2})^{k}\right)B_{n-k}\frac{z^{n}}{n!},
\end{align*}
and
\begin{align*}
\frac{\exp(2x_{2}z)}{(x_{1}-x_{2})^{2}}(\exp((x_{1}-x_{2})z)-1)^{2}\frac{(x_{1}-x_{2})z}{\exp((x_{1}-x_{2})z)-1}&=\frac{z}{x_{1}-x_{2}}(\exp((x_{1}+x_{2})z)-\exp(2x_{2}z))\\
&=\sum_{n=0}^{\infty}\frac{n}{x_{1}-x_{2}}\left((x_{1}+x_{2})^{n-1}-(2x_{2})^{n-1}\right)\frac{z^{n}}{n!}.
\end{align*}
By equating the coefficients of $\frac{z^{n}}{n!}$, we obtain
\begin{equation*}
\sum_{k=0}^{n}\binom{n}{k}(x_{1}-x_{2})^{n-k-2}\left(2^{k}\phi_{k}(x_{1}+x_{2})-2(x_{1}+x_{2})^{k}\right)B_{n-k}=\frac{n}{x_{1}-x_{2}}\left((x_{1}+x_{2})^{n-1}-(2x_{2})^{n-1}\right).
\end{equation*}

We have:
$B_{0}=1$ ,$B_{1}=\frac{-1}{2}$ 
for
 $k>0$, 
$B_{2k+1}=0$
and by applying the relationship \eqref{kll}, we obtain
\begin{equation*}
\sum_{\underset{n\equiv k[2]}{k=0}}^{n}\binom{n}{k}(x_{1}-x_{2})^{n-k}\left(2^{k}\phi_{k}(x_{1}+x_{2})-2(x_{1}+x_{2})^{k}\right)B_{n-k}=n2^{n-2}(x_{1}-x_{2})^{2}S_{n-2}(x_{1}+x_{2}).
\end{equation*}

To prove the relation \eqref{a98a}, we apply the property 
$G_{n}=2(1-2^{n})B_{n}$
to the relation \eqref{qxa}.
\end{proof}
\begin{theorem}
Given an alphabet $A=\{x_{1},x_{2}\}$, then for any positive integer $n$, we
have
\begin{gather}
\sum_{k=0}^{n}\binom{n}{k}(x_{1}-x_{2})^{n-k}(n-k-1)\left(2^{k}\phi_{k}(x_{1}+x_{2})-2(x_{1}+x_{2})^{k}\right)B_{n-k}=n(1-n)(x_{1}-x_{2})^{2}(x_{1}+x_{2})^{n-2}. \label{fvd}\\
\sum_{k=0}^{n}\binom{n}{k}(x_{1}-x_{2})^{n-k}(n-k-1)\left(2^{k}\phi_{k}(x_{1}+x_{2})+2(x_{1}+x_{2})^{k}\right)B_{n-k}=-n(1-n)(x_{1}-x_{2})^{2}(x_{1}+x_{2})^{n-2}.\label{dw}
\end{gather}
\end{theorem}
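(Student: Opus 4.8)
The plan is to read both identities off the coefficients of a single product of two exponential generating functions, in the same spirit as the proof of \eqref{qxaa1}, but with the Bernoulli generating function \eqref{g} in place of the Genocchi one. The shared preliminary step is to manufacture the weight $(n-k-1)$ attached to $B_{n-k}$: starting from \eqref{g}, I would divide through by $z$, differentiate once in $z$, and multiply back by $z^{2}$ to obtain the Bernoulli analogue of \eqref{ss3},
\[
\sum_{n=0}^{\infty}(n-1)(x_{1}-x_{2})^{n}B_{n}\frac{z^{n}}{n!}=\frac{-(x_{1}-x_{2})^{2}z^{2}\exp((x_{1}-x_{2})z)}{(\exp((x_{1}-x_{2})z)-1)^{2}}.
\]
Everything then reduces to choosing the correct square of a symmetric-function generating function to multiply against this relation and comparing coefficients of $z^{n}/n!$.

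For \eqref{fvd} the natural partner is the square of \eqref{kl}. Its Taylor coefficients are $\tfrac{1}{(x_{1}-x_{2})^{2}}\left(2^{k}\phi_{k}(x_{1}+x_{2})-2(x_{1}+x_{2})^{k}\right)$, so the Cauchy product with the displayed relation is precisely $\tfrac{1}{(x_{1}-x_{2})^{2}}$ times the left-hand side of \eqref{fvd}. On the closed-form side I would use $\exp(x_{1}z)-\exp(x_{2}z)=\exp(x_{2}z)\left(\exp((x_{1}-x_{2})z)-1\right)$, so that the square of \eqref{kl} equals $\tfrac{1}{(x_{1}-x_{2})^{2}}\exp(2x_{2}z)\left(\exp((x_{1}-x_{2})z)-1\right)^{2}$; this makes the denominator of the Bernoulli relation cancel, leaving $-z^{2}\exp((x_{1}+x_{2})z)$. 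Reading off the coefficient of $z^{n}/n!$, using $n!/(n-2)!=n(n-1)$, and clearing the factor $(x_{1}-x_{2})^{2}$ then gives \eqref{fvd}.

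The step I expect to be the genuine obstacle is \eqref{dw}. Repeating the previous argument verbatim with the square of \eqref{l} in place of \eqref{kl} fails: that square equals $\exp(2x_{2}z)\left(\exp((x_{1}-x_{2})z)+1\right)^{2}$, and the factor $\left(\exp((x_{1}-x_{2})z)+1\right)^{2}$ does not cancel the Bernoulli denominator $\left(\exp((x_{1}-x_{2})z)-1\right)^{2}$, so no closed form results from a single product. I would therefore deduce \eqref{dw} from the already-proved Genocchi identity \eqref{qxaa1}, whose combination $2^{k}\phi_{k}(x_{1}+x_{2})+2(x_{1}+x_{2})^{k}$ is exactly the one appearing in \eqref{dw}; substituting the classical relation $G_{m}=2(1-2^{m})B_{m}$ converts the Genocchi convolution into a Bernoulli convolution, exactly as \eqref{a98a} was obtained from \eqref{qxa}. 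The crux is then the factor $1-2^{n-k}$ that this substitution attaches to $B_{n-k}$: writing $y=\exp((x_{1}-x_{2})z)$, the doubled-argument piece carries the denominator $(y^{2}-1)^{2}=(y-1)^{2}(y+1)^{2}$, and only after combining it with the plain piece does the numerator telescope through the elementary identity $(y+1)^{2}y-4y^{2}=y(y-1)^{2}$, which finally cancels $(y-1)^{2}$ and returns a polynomial times $\exp((x_{1}+x_{2})z)$. Carrying this cancellation out cleanly and reconciling it with the stated right-hand side is where the real work of \eqref{dw} lies.
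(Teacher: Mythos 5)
Your proof of \eqref{fvd} is correct and is exactly the paper's own argument: the paper forms the same weighted Bernoulli series \eqref{ss33} (divide \eqref{g} by $z$, differentiate, multiply back by $z^{2}$), multiplies it by the square of \eqref{kl}, cancels $\left(\exp((x_{1}-x_{2})z)-1\right)^{2}$ to leave $-z^{2}\exp((x_{1}+x_{2})z)$, and compares coefficients of $z^{n}/n!$.

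For \eqref{dw} you also chose the paper's route --- substitute $G_{n-k}=2(1-2^{n-k})B_{n-k}$ into \eqref{qxaa1}, exactly as \eqref{a98a} is deduced from \eqref{qxa} --- and you correctly identified the obstruction: this substitution unavoidably attaches the factor $1-2^{n-k}$ to $B_{n-k}$, a factor which is absent from \eqref{dw} as printed. But the ``real work'' you defer, namely cancelling that factor by a telescoping manipulation in $y=\exp((x_{1}-x_{2})z)$, cannot succeed, because \eqref{dw} as printed is false. Take $n=2$, $x_{1}=1$, $x_{2}=0$, so that $\phi_{0}=2$, $\phi_{1}=\phi_{2}=1$ and every power of $x_{1}-x_{2}$ and of $x_{1}+x_{2}$ equals $1$; the $k=1$ term vanishes since its weight is $n-k-1=0$, and the left side of \eqref{dw} becomes
\begin{equation*}
\binom{2}{0}(1)\bigl(2^{0}\phi_{0}+2\bigr)B_{2}+\binom{2}{2}(-1)\bigl(2^{2}\phi_{2}+2\bigr)B_{0}=4\cdot\tfrac{1}{6}-6=-\tfrac{16}{3},
\end{equation*}
whereas the stated right side is $-2(1-2)=2$. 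What your substitution actually proves, after dividing \eqref{qxaa1} by $2$ and flipping signs, is
\begin{equation*}
\sum_{k=0}^{n}\binom{n}{k}(x_{1}-x_{2})^{n-k}(2^{n-k}-1)(n-k-1)\left(2^{k}\phi_{k}(x_{1}+x_{2})+2(x_{1}+x_{2})^{k}\right)B_{n-k}=-n(1-n)(x_{1}-x_{2})^{2}(x_{1}+x_{2})^{n-2}
\end{equation*}
(in the example above this reads $3\cdot 4\cdot B_{2}=2$, which checks). This corrected form, carrying the factor $2^{n-k}-1$, is the one the paper itself relies on downstream: the factor $2^{m-k+2}-1$ appears in \eqref{qm}, and the factor $2^{n-k}-1$ appears in the corresponding corollary for bivariate Fibonacci--Lucas polynomials. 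In short, \eqref{dw} is misstated (a typographical omission of the factor $2^{n-k}-1$); the paper's own one-line proof (``apply $G_{n}=2(1-2^{n})B_{n}$ to \eqref{qxaa1}'') establishes only the corrected identity; and your substitution argument, with the factor kept rather than cancelled, is the complete and intended proof. Stop looking for the telescoping step --- it does not exist.
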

\begin{proof}
We divide both sides of the equation \eqref{g} by $z$ then we differentiate with respect to $z$, we have
\begin{equation*}
\sum_{n=0}^{\infty}(n-1)(x_{1}-x_{2})^{n}B_{n}\frac{z^{n-2}}{n!}=\frac{-(x_{1}-x_{2})^{2}\exp((x_{1}-x_{2})z)}{(\exp((x_{1}-x_{2})z)-1)^{2}},
\end{equation*}
 and we multiply both sides by $z^{2}$, We get
\begin{equation}
\sum_{n=0}^{\infty}(n-1)(x_{1}-x_{2})^{n}B_{n}\frac{z^{n}}{n!}=\frac{-(x_{1}-x_{2})^{2}z^{2}\exp((x_{1}-x_{2})z)}{(\exp((x_{1}-x_{2})z)-1)^{2}}.\label{ss33}
\end{equation}

Multiply the equation \eqref{ss33} by the square of the exponential generating function related to the symmetric function $S_{k-1}(x_{1}+x_{2})$, we derive
\begin{align*}
&\left(\sum_{k=0}^{\infty}S_{k-1}(x_{1}+x_{2})\frac{z^{k}}{k!}\right)^{2}\left(\sum_{n=0}^{\infty}(n-1)(x_{1}-x_{2})^{n}B_{n}\frac{z^{n}}{n!}\right)\\
&=\left(\sum_{k=0}^{\infty}\frac{1}{(x_{1}-x_{2})^{2}}\left(2^{k}\phi_{k}(x_{1}+x_{2})-2(x_{1}+x_{2})^{k}\right) \frac{z^{k}}{k!}\right)\left(\sum_{n=0}^{\infty}(n-1)(x_{1}-x_{2})^{n}B_{n}\frac{z^{n}}{n!}\right)\\
&=\sum_{n=0}^{\infty}\sum_{k=0}^{n}\binom{n}{k}(n-k-1)(x_{1}-x_{2})^{n-k-2}\left(2^{k}\phi_{k}(x_{1}+x_{2})-2(x_{1}+x_{2})^{k}\right)B_{n-k}\frac{z^{n}}{n!},
\end{align*}
then
\begin{align*}
\frac{\exp(2x_{2}z)}{(x_{1}-x_{2})^{2}}(\exp((x_{1}+x_{2})z)-1)^{2}\frac{-(x_{1}-x_{2})^{2}z^{2}\exp((x_{1}-x_{2})z)}{(\exp((x_{1}-x_{2})z)-1)^{2}}&=-z^{2}\exp((x_{1}+x_{2})z)\\
&=\sum_{n=0}^{\infty}n(1-n)(x_{1}+x_{2})^{n-2}\frac{z^{n}}{n!}.
\end{align*}
Comparing of the coefficients of $\frac{z^{n}}{n!}$, we obtain the desired result.

To prove the relation \eqref{dw}, we apply the property 
$G_{n}=2(1-2^{n})B_{n}$
to the relation \eqref{qxaa1}.
\end{proof}
\begin{theorem}
Given an alphabet $A=\{x_{1},x_{2}\}$, then for any positive integer $n$, we
have
\begin{align}
&\sum_{\underset{m\equiv k[2]}{k=0}}^{m}\binom{m}{k}(x_{1}-x_{2})^{m-k}\frac{B_{m-k-2}}{m-k-2}\left(2^{k}\phi_{k}(x_{1}+x_{2})-2(x_{1}+x_{2})^{k}\right) \notag \\
&=\frac{2^{m+2}\left(\phi_{m+2}(x_{1}+x_{2})-2(x_{1}+x_{2})^{m+2}\right)  }{(x_{1}-x_{2})^{2}(m+2)(m+1)}-2(x_{1}+x_{2})^{m}.\label{wcx}
\end{align}
\begin{gather}
\sum_{\underset{m\equiv k[2]}{k=0}}^{m}\binom{m}{k}(x_{1}-x_{2})^{m-k}(2^{m-k+2}-1)\frac{B_{m-k-2}}{m-k-2}\left(2^{k}\phi_{k}(x_{1}+x_{2})+2(x_{1}+x_{2})^{k}\right)=(e_{1}+e_{2})^{m}. \label{qm}
\end{gather}
\end{theorem}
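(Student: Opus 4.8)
The plan is to obtain both identities from the Bernoulli and Genocchi convolution relations of the preceding theorems by specializing the running index to $n=m+2$, exactly as \eqref{56} was extracted from \eqref{qxaa1}. For \eqref{wcx} I would start from the minus-type Bernoulli relation \eqref{fvd}, and for \eqref{qm} from the already-specialized Genocchi relation \eqref{56}, converting Genocchi numbers to Bernoulli numbers only at the very end. No new generating-function computation is required; the entire argument is a reindexing of known coefficient identities together with the elementary properties $B_{2j+1}=0$ and $G_{2j+1}=0$ for $j\ge 1$.

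First, for \eqref{wcx}, I would substitute $n=m+2$ into \eqref{fvd} and rewrite the factor $\binom{m+2}{k}(m+1-k)$ by means of the same elementary identity used for \eqref{56},
\begin{equation*}
\binom{m+2}{k}(m+1-k)=\binom{m}{k}\frac{(m+1)(m+2)}{m-k+2},
\end{equation*}
which is valid for $0\le k\le m$. After dividing by $(m+1)(m+2)$ (and normalizing by $(x_{1}-x_{2})^{2}$), this produces precisely the summand $\binom{m}{k}\frac{B_{m-k+2}}{m-k+2}\big(2^{k}\phi_{k}(x_{1}+x_{2})-2(x_{1}+x_{2})^{k}\big)$ carrying the power $(x_{1}-x_{2})^{m-k}$. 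The two remaining terms, $k=m+1$ and $k=m+2$, must be handled separately: the term $k=m+1$ is annihilated by $(m+1-k)=0$, whereas the term $k=m+2$ does \emph{not} vanish, since $B_{0}=1$. It is this surviving term, equal up to scaling to $\frac{2^{m+2}\phi_{m+2}(x_{1}+x_{2})-2(x_{1}+x_{2})^{m+2}}{(x_{1}-x_{2})^{2}}$, that I would transpose to the right-hand side; it is the source of the $\phi_{m+2}$-correction in \eqref{wcx}. Finally, because $B_{j}=0$ for odd $j\ge 3$, only indices with $m\equiv k\pmod 2$ survive, which accounts for the parity restriction on the sum.

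For \eqref{qm}, I would not repeat the specialization but act directly on \eqref{56} through the classical relation $G_{j}=2\!\left(1-2^{j}\right)B_{j}$, exactly as \eqref{a98a} and \eqref{dw} were obtained in the Bernoulli theorems. Writing $G_{m-k+2}=2\big(1-2^{m-k+2}\big)B_{m-k+2}$, cancelling the constant $2$ against the $-2(x_{1}-x_{2})^{2}(x_{1}+x_{2})^{m}$ on the right, and using $\big(1-2^{m-k+2}\big)=-\big(2^{m-k+2}-1\big)$, a single sign change followed by a division by $(x_{1}-x_{2})^{2}$ reduces the right-hand side to $(e_{1}+e_{2})^{m}=(x_{1}+x_{2})^{m}$ and yields \eqref{qm}. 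Here the boundary term $k=m+2$ causes no difficulty, because $G_{0}=0$ already annihilated it when \eqref{56} was formed.

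The main obstacle is the bookkeeping of the two boundary indices $k=m+1,m+2$ after the substitution $n=m+2$, and in particular the non-vanishing $k=m+2$ contribution in the Bernoulli relation \eqref{fvd}: pinning down its coefficient (the evaluation $\tfrac{1}{(x_{1}-x_{2})^{2}}\big(2^{m+2}\phi_{m+2}(x_{1}+x_{2})-2(x_{1}+x_{2})^{m+2}\big)$, which is the first-theorem quantity \eqref{xx} at $n=m+2$) is exactly what fixes the form of the correction on the right-hand side of \eqref{wcx}. Everything else—the rescaling by $(m+1)(m+2)(x_{1}-x_{2})^{2}$, the parity restriction, and the $G$-to-$B$ conversion—is routine.
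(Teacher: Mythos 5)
Your proposal is correct and is essentially the paper's own proof. For \eqref{wcx} the paper proceeds exactly as you do: it sets $n=m+2$ in \eqref{fvd}, rewrites $\binom{n}{k}(n-k-1)=\binom{m}{k}\frac{(m+1)(m+2)}{m-k+2}$, disposes of the two boundary indices $k=m+1$ (killed by the factor $n-k-1$) and $k=m+2$ (surviving, since $B_{0}=1$, and transposed to the right-hand side), then divides by $(m+1)(m+2)(x_{1}-x_{2})^{2}$; for \eqref{qm} the paper applies this same $n=m+2$ specialization to \eqref{dw} rather than converting $G_{j}=2(1-2^{j})B_{j}$ inside \eqref{56} as you do, but since \eqref{dw} and \eqref{56} are themselves obtained from \eqref{qxaa1} by exactly these two commuting operations, your order-swapped derivation is the same argument in substance (in both routes the $k=m+2$ term is annihilated, via $G_{0}=0$ in yours and via $1-2^{0}=0$ in the paper's).
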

\begin{proof}
When putting
$n=m+2$
We remark
$$\binom{n}{k}(n-k-1)=\binom{m}{k}\frac{(m+1)(m+2)}{m-k+2},$$
then switching the order of the two opposite terms, 
$k=m+1$,
$k=m+2$
on the left side of the relation 
\eqref{fvd}, we arrive at 
\begin{align*}
&\sum_{k=0}^{m}\binom{m}{k}(x_{1}-x_{2})^{m-k}\frac{(m+1)(m+2)B_{m-k-2}}{m-k-2}\left(2^{k}\phi_{k}(x_{1}+x_{2})-2(x_{1}+x_{2})^{k}\right)\\ &=2^{m+2}\left(\phi_{m+2}(x_{1}+x_{2})-2(x_{1}+x_{2})^{m+2}\right) \\
&-2(m+2)(m+1)(x_{1}+x_{2})^{m},
\end{align*}
then
\begin{align*}
&\sum_{\underset{m\equiv k[2]}{k=0}}^{m}\binom{m}{k}(x_{1}-x_{2})^{m-k}\frac{B_{m-k-2}}{m-k-2}\left(2^{k}\phi_{k}(x_{1}+x_{2})-2(x_{1}+x_{2})^{k}\right)  \\
&=\frac{2^{m+2}\left(\phi_{m+2}(x_{1}+x_{2})-2(x_{1}+x_{2})^{m+2}\right)  }{(x_{1}-x_{2})^{2}(m+2)(m+1)}-2(x_{1}+x_{2})^{m}.
\end{align*}

To prove relation \eqref{qm}, we apply the same method to relation  \eqref{dw}.
\end{proof}

\begin{theorem}\label{gj}
Given an alphabet $A=\{x_{1}, x_{2}\}$, then for any positive integer $n$, we
have
\begin{equation}
\sum_{\underset{n\equiv k[2]}{k=0}}^{n}\binom{n}{k}\left(\frac{x_{1}-x_{2}}{2}\right)^{n-k}\left(2^{k}\phi_{k}(x_{1}+x_{2})+2(x_{1}+x_{2})^{k}\right)E_{n-k}=2^{1-n}\left((3x_{1}+x_{2})^{n}+(x_{1}+3x_{2})^{n}\right). \label{sxv}
\end{equation}
\end{theorem}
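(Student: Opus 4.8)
The plan is to mirror the generating-function argument of Theorem~\ref{th6}, replacing the Genocchi generating function by the Euler one in \eqref{16} and rescaling $z$ by a factor $\tfrac12$. First I would set $z\mapsto \frac{x_{1}-x_{2}}{2}z$ in \eqref{16}, obtaining
\begin{equation*}
\sum_{n=0}^{\infty}\left(\frac{x_{1}-x_{2}}{2}\right)^{n}E_{n}\frac{z^{n}}{n!}=\frac{2\exp\!\left(\frac{x_{1}-x_{2}}{2}z\right)}{\exp((x_{1}-x_{2})z)+1}.
\end{equation*}
The factor $\tfrac12$ is forced by the power $\left(\frac{x_{1}-x_{2}}{2}\right)^{n-k}$ appearing in the target identity, exactly as the factor $(x_{1}-x_{2})$ was in the Genocchi case.

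Next I would multiply this by the square of the exponential generating function of $\phi_{k}(x_{1}+x_{2})$ from \eqref{l}, which, as recorded in the earlier proofs, equals $\sum_{k\geq 0}\left(2^{k}\phi_{k}(x_{1}+x_{2})+2(x_{1}+x_{2})^{k}\right)\frac{z^{k}}{k!}$. Taking the Cauchy product of the two series then produces exactly the coefficient appearing on the left-hand side of \eqref{sxv}, with $k$ ranging over all of $0,\dots,n$ at this stage.

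The key computation is to evaluate the same product in closed form. Writing $\exp(x_{1}z)+\exp(x_{2}z)=\exp(x_{2}z)\left(\exp((x_{1}-x_{2})z)+1\right)$, the squared $\phi$-series becomes $\exp(2x_{2}z)\left(\exp((x_{1}-x_{2})z)+1\right)^{2}$, so multiplying by the rescaled Euler series cancels one factor of $\exp((x_{1}-x_{2})z)+1$ against the denominator. What remains is $2\exp\!\left(\frac{x_{1}+3x_{2}}{2}z\right)+2\exp\!\left(\frac{3x_{1}+x_{2}}{2}z\right)$, whose coefficient of $\frac{z^{n}}{n!}$ is $2^{1-n}\left((3x_{1}+x_{2})^{n}+(x_{1}+3x_{2})^{n}\right)$. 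Comparing coefficients then gives \eqref{sxv} with the sum taken over all $k$.

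To obtain the restriction $n\equiv k\,[2]$ in the statement, I would use that the Euler generating function $\tfrac{2\exp(z)}{\exp(2z)+1}=\operatorname{sech}(z)$ is even, so $E_{m}=0$ whenever $m$ is odd; hence $E_{n-k}$ vanishes unless $n-k$ is even. I expect this to be routine rather than an obstacle: in contrast to the Genocchi case, where $G_{1}=1$ left a residual term that had to be absorbed via \eqref{kll}, here the closed form already matches the claimed right-hand side and the parity restriction follows immediately. The only point requiring care is the arithmetic collapsing the exponents $2x_{2}+\tfrac{x_{1}-x_{2}}{2}$ and $2x_{2}+\tfrac{3(x_{1}-x_{2})}{2}$ to $\tfrac{x_{1}+3x_{2}}{2}$ and $\tfrac{3x_{1}+x_{2}}{2}$, respectively.
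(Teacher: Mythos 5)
Your proposal is correct and follows essentially the same route as the paper's own proof: the substitution $z\mapsto\frac{x_{1}-x_{2}}{2}z$ in \eqref{16}, multiplication by the squared generating function of $\phi_{k}(x_{1}+x_{2})$, the cancellation of one factor of $\exp((x_{1}-x_{2})z)+1$, and the collapse to $2\exp\left(\frac{3x_{1}+x_{2}}{2}z\right)+2\exp\left(\frac{x_{1}+3x_{2}}{2}z\right)$, followed by the vanishing of odd-indexed Euler numbers to impose the parity restriction. Your observation that the evenness of $\operatorname{sech}(z)$ makes all odd-indexed $E_{m}$ vanish (including $E_{1}$) is in fact slightly cleaner than the paper's statement of that step.
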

\begin{proof}
We use the change of variable $z=\frac{e_{1}-e_{2}}{2}z$ in 
\eqref{16}, we obtain
\begin{equation}
\sum_{n=0}^{\infty}\left( \frac{x_{1}-x_{2}}{2}\right)^{n}E_{n}\frac{z^{n}}{n!}=\frac{2\exp(\frac{x_{1}-x_{2}}{2}z)}{\exp((x_{1}-x_{2})z)+1}. \label{h}
\end{equation}

Square the exponential generating function associated with the symmetric function  $\phi_{k-1}(x_{1}+x_{2})$ and multiply it by relation \eqref{h}. You will get to
\begin{align*}
&\left(\sum_{n=0}^{\infty}(\frac{x_{1}-x_{2}}{2})^{n}E_{n}\frac{z^{n}}{n!}\right)\left(\sum_{k=0}^{\infty}\phi_{k}(x_{1}+x_{2})\frac{z^{k}}{k!}\right)^{2}\\
&=\left(\sum_{n=0}^{\infty}(\frac{x_{1}-x_{2}}{2})^{n}E_{n}\frac{z^{n}}{n!}\right)\left(\sum_{k=0}^{\infty}(2^{k}\phi_{k}(x_{1}+x_{2})+2(x_{1}+x_{2})^{k})\frac{z^{k}}{k!}\right)\\
&=\sum_{n=0}^{\infty}\sum_{k=0}^{n}\binom{n}{k}(\frac{x_{1}-x_{2}}{2})^{n-k}\left(2^{k}\phi_{k}(x_{1}+x_{2})+2(2(x_{1}+x_{2}))^{k}\right)E_{n-k}\frac{z^{n}}{n!},
\end{align*}
then
\begin{align*}
\left(\exp(x_{1}z)+\exp(x_{2}z)\right)^{2}\frac{2\exp(\frac{x_{1}-x_{2}}{2})}{\exp((x_{1}-x_{2})z)+1}
&=2\left(\exp\left(\frac{3x_{1}+x_{2}}{2}z\right)+\exp\left(\frac{x_{1}+3x_{2}}{2}z\right)\right)\\
&=\sum_{n=0}^{\infty}2^{1-n}\left((3x_{1}+x_{2})^{n}+(x_{1}+3x_{2})^{n}\right)\frac{z^{n}}{n!}.
\end{align*}
Comparing the coefficients of $\frac{z^{n}}{n!}$, we obtain \begin{equation*}
\sum_{k=0}^{n}\binom{n}{k}\left(\frac{x_{1}-x_{2}}{2}\right)^{n-k}\left(2^{k}\phi_{k}(x_{1}+x_{2})+2(x_{1}+x_{2})^{k}\right)E_{n-k}=2^{1-n}\left((3x_{1}+x_{2})^{n}+(x_{1}+3x_{2})^{n}\right).
\end{equation*}

We have for $k>0$, $E_{2k+1}=0$, then we get the desired result.
\end{proof}
\subsection{\textbf{Some special cases}}

We now consider the Theorems \ref{th6}- \ref{gj} to derive the following two cases.
\begin{enumerate}
	\item [\textbf{Case 1.}]
Let $x_{1}=\frac{y+\sqrt{y^{2}+4t}}{2}$ and $x_{2}=\frac{y-\sqrt{y^{2}+4t}}{2}$, then we have the following the following combinatorial
results of Genocchi, Bernoulli and Euler  numbers with bivariate Fibonacci,  bivariate Lucas polynomials.
\begin{corollary}\label{1}
Let $n$ be a positive integer, we have
\begin{equation*}
\sum_{\underset{n\equiv k[2]}{k=0}}^{n}\binom{n}{k}\sqrt{y^{2}+4t}^{n-k}\left(2^{k}L_{k}(y,t)+2x^{k}\right)G_{n-k}=-n2^{n-1}(y^{2}+4t)F_{n-1}(y,t).
\end{equation*}
\end{corollary}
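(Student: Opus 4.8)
The plan is to obtain Corollary~\ref{1} as the direct specialization of Theorem~\ref{th6} to the alphabet $A=\{x_1,x_2\}$ with $x_1=\frac{y+\sqrt{y^2+4t}}{2}$ and $x_2=\frac{y-\sqrt{y^2+4t}}{2}$, since these are precisely the roots $\lambda_1,\lambda_2$ of the characteristic equation $\lambda^2-y\lambda-t=0$ governing the bivariate Fibonacci and Lucas polynomials. No new generating-function computation is needed; the entire content is a change of variables in the already-established identity \eqref{qxa}.

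First I would record the algebraic dictionary relating the symmetric quantities in $x_1,x_2$ to the polynomials in $y,t$. From the relations between roots and coefficients of $\lambda^2-y\lambda-t$ one has $x_1+x_2=y$ and $x_1x_2=-t$, whence $x_1-x_2=\sqrt{(x_1+x_2)^2-4x_1x_2}=\sqrt{y^2+4t}$ and $(x_1-x_2)^2=y^2+4t$. The Binet formulas then give $\phi_k(x_1+x_2)=x_1^k+x_2^k=L_k(y,t)$ and, using $S_m(x_1+x_2)=\frac{x_1^{m+1}-x_2^{m+1}}{x_1-x_2}$ with $m=n-2$, also $S_{n-2}(x_1+x_2)=\frac{x_1^{n-1}-x_2^{n-1}}{x_1-x_2}=F_{n-1}(y,t)$.

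Then I would substitute these into \eqref{qxa} term by term: the factor $(x_1-x_2)^{n-k}$ becomes $\sqrt{y^2+4t}^{\,n-k}$, the bracket $2^k\phi_k(x_1+x_2)+2(x_1+x_2)^k$ becomes $2^kL_k(y,t)+2y^k$, and the right-hand side $-n2^{n-1}(x_1-x_2)^2S_{n-2}(x_1+x_2)$ collapses to $-n2^{n-1}(y^2+4t)F_{n-1}(y,t)$. Reading off the equality yields the asserted identity, and the parity restriction $n\equiv k\,[2]$ is inherited unchanged from Theorem~\ref{th6}.

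There is no analytic obstacle here: once the dictionary is in place the proof is a single mechanical substitution, so the ``hard part'' is purely bookkeeping. The two points requiring care are correctly tracking the index shift $S_{n-2}\mapsto F_{n-1}$, and noting that the printed summand $2x^k$ should read $2y^k$, since the specialization forces $x_1+x_2=y$ so that the base of that power is $y$ rather than the undefined symbol $x$.
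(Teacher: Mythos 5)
Your proof is correct and is exactly the paper's approach: the paper states Corollary~\ref{1} as the Case~1 specialization of Theorem~\ref{th6} to $x_{1}=\frac{y+\sqrt{y^{2}+4t}}{2}$, $x_{2}=\frac{y-\sqrt{y^{2}+4t}}{2}$, with no further argument, and your dictionary ($x_{1}+x_{2}=y$, $x_{1}-x_{2}=\sqrt{y^{2}+4t}$, $\phi_{k}(x_{1}+x_{2})=L_{k}(y,t)$, $S_{n-2}(x_{1}+x_{2})=F_{n-1}(y,t)$) is precisely that substitution. You are also right that the printed $2x^{k}$ in the corollary is a typo for $2y^{k}$, forced by $x_{1}+x_{2}=y$.
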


\begin{corollary}
Let $n$ be a positive integer, we have
\begin{equation*}
\sum_{k=0}^{n}\binom{n}{k}\sqrt{y^{2}+4t}^{n-k}(n-k-1)\left(2^{k}L_{k}(y,t)+2x^{k}\right)G_{n-k}=2n(1-n)(y^{2}+4t)x^{n-2}.
\end{equation*}
\end{corollary}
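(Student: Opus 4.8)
The plan is to obtain this corollary as a direct specialization of the general symmetric-function identity \eqref{qxaa1}, namely
\[
\sum_{k=0}^{n}\binom{n}{k}(x_{1}-x_{2})^{n-k}(n-k-1)\left(2^{k}\phi_{k}(x_{1}+x_{2})+2(x_{1}+x_{2})^{k}\right)G_{n-k}=-2n(n-1)(x_{1}-x_{2})^{2}(x_{1}+x_{2})^{n-2},
\]
by inserting the alphabet $x_{1}=\frac{y+\sqrt{y^{2}+4t}}{2}$, $x_{2}=\frac{y-\sqrt{y^{2}+4t}}{2}$ that underlies Case~1. First I would record the three elementary consequences of this choice: $x_{1}-x_{2}=\sqrt{y^{2}+4t}$, $x_{1}+x_{2}=y$, and hence $(x_{1}-x_{2})^{2}=y^{2}+4t$. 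These are exactly the quantities entering both sides of \eqref{qxaa1}.

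Next I would translate the symmetric function appearing in the summand into the corresponding bivariate polynomial. Since $\phi_{k}(x_{1}+x_{2})=x_{1}^{k}+x_{2}^{k}$ and the roots $x_{1},x_{2}$ coincide with $\lambda_{1},\lambda_{2}$ of the characteristic equation $\lambda^{2}-y\lambda-t=0$, the Binet formula for the bivariate Lucas polynomials yields $\phi_{k}(x_{1}+x_{2})=L_{k}(y,t)$. Likewise $(x_{1}+x_{2})^{k}=y^{k}$, so the factor $2^{k}\phi_{k}(x_{1}+x_{2})+2(x_{1}+x_{2})^{k}$ becomes $2^{k}L_{k}(y,t)+2y^{k}$, matching the summand of the corollary (here the symbol $x$ displayed in the statement should be read as $y$, since in Case~1 the relevant parameter is $x_{1}+x_{2}=y$).

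With these substitutions the left-hand side of \eqref{qxaa1} is precisely the left-hand side of the corollary, while its right-hand side becomes $-2n(n-1)(y^{2}+4t)\,y^{n-2}$. The only remaining manipulation is the cosmetic rewriting $-2n(n-1)=2n(1-n)$, which produces $2n(1-n)(y^{2}+4t)\,y^{n-2}$ and completes the identity.

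There is no substantial mathematical obstacle here: all the content is carried by \eqref{qxaa1}, and the corollary amounts to bookkeeping. The only points demanding care are confirming that the correspondence $\phi_{k}\mapsto L_{k}$ invokes the Lucas (not the Fibonacci) Binet formula, that $(x_{1}+x_{2})^{k}$ collapses to $y^{k}$, and that the factor $-2n(n-1)$ is correctly recast as $2n(1-n)$. A quick verification at $n=2$, where only the $k=0$ term survives (because $G_{0}=0$ and the factor $n-k-1$ kills the $k=1$ term) and both sides equal $-4(y^{2}+4t)$, confirms the sign and the exponent $y^{n-2}$.
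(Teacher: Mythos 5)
Your proposal is correct and matches the paper's own route exactly: the corollary is obtained by specializing identity \eqref{qxaa1} to the alphabet $x_{1}=\frac{y+\sqrt{y^{2}+4t}}{2}$, $x_{2}=\frac{y-\sqrt{y^{2}+4t}}{2}$, using $x_{1}-x_{2}=\sqrt{y^{2}+4t}$, $x_{1}+x_{2}=y$, and the Lucas Binet formula $\phi_{k}(x_{1}+x_{2})=L_{k}(y,t)$, then rewriting $-2n(n-1)$ as $2n(1-n)$. You are also right that the symbol $x$ in the printed statement is a typographical slip for $y$, and your check at $n=2$ correctly confirms the identity.
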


\begin{corollary}
Let $n$ be a positive integer, we have
\begin{equation*}
\sum_{\underset{m\equiv k[2]}{k=0}}^{m}\binom{m}{k}\sqrt{y^{2}+4t}^{m-k}\frac{G_{m-k-2}}{n-k-2}\left(2^{k}L_{k}(y,t)+2y^{k}\right)=-2(y^{2}+4t)x^{m}.
\end{equation*}
\end{corollary}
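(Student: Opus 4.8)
The plan is to obtain this corollary directly as the Case~1 specialization of equation \eqref{56}, which I may take as already established. I would set $x_{1}=\frac{y+\sqrt{y^{2}+4t}}{2}$ and $x_{2}=\frac{y-\sqrt{y^{2}+4t}}{2}$; these are exactly the roots $\lambda_{1},\lambda_{2}$ of the characteristic equation $\lambda^{2}-y\lambda-t=0$, so the Binet formula for the bivariate Lucas polynomials identifies $\phi_{k}(x_{1}+x_{2})=x_{1}^{k}+x_{2}^{k}=\lambda_{1}^{k}+\lambda_{2}^{k}=L_{k}(y,t)$. This identification is the one substantive link between the symmetric-function identity and the polynomial identity.

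First I would record the three elementary algebraic consequences of this choice of variables: $x_{1}-x_{2}=\sqrt{y^{2}+4t}$, hence $(x_{1}-x_{2})^{2}=y^{2}+4t$, and $x_{1}+x_{2}=y$. These are the only arithmetic facts the argument requires, and each follows by a one-line computation from the explicit formulas for $x_{1}$ and $x_{2}$.

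Next I would substitute termwise into \eqref{56}. On the left-hand side the factor $(x_{1}-x_{2})^{m-k}$ becomes $(\sqrt{y^{2}+4t})^{\,m-k}$, the bracket $2^{k}\phi_{k}(x_{1}+x_{2})+2(x_{1}+x_{2})^{k}$ becomes $2^{k}L_{k}(y,t)+2y^{k}$, while the parity restriction $m\equiv k\pmod 2$ and the coefficient $\frac{G_{m-k-2}}{m-k-2}$ are inherited unchanged from \eqref{56}. On the right-hand side, $-2(x_{1}-x_{2})^{2}(x_{1}+x_{2})^{m}$ collapses to $-2(y^{2}+4t)\,y^{m}$. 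Assembling these replacements produces precisely the asserted identity.

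I do not expect a genuine obstacle here: the corollary is a term-by-term rewriting of \eqref{56} under a fixed change of variables, with no new summation manipulation or generating-function work. The only steps demanding attention are the correct application of the Binet formula to get $\phi_{k}(x_{1}+x_{2})=L_{k}(y,t)$ and the verification of $(x_{1}-x_{2})^{2}=y^{2}+4t$; once these are in place the remainder of the proof is purely mechanical substitution.
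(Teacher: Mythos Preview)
Your proposal is correct and matches the paper's own approach exactly: the corollary is obtained by the Case~1 specialization $x_{1}=\frac{y+\sqrt{y^{2}+4t}}{2}$, $x_{2}=\frac{y-\sqrt{y^{2}+4t}}{2}$ in equation \eqref{56}, using the Binet formula $\phi_{k}(x_{1}+x_{2})=L_{k}(y,t)$ together with $x_{1}+x_{2}=y$ and $(x_{1}-x_{2})^{2}=y^{2}+4t$. Note that your computation actually yields $-2(y^{2}+4t)\,y^{m}$ on the right and $m-k-2$ in the denominator on the left, which are what the identity should read; the printed $x^{m}$ and $n-k-2$ are evident typos in the statement.
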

\begin{corollary}
Let $n$ be a positive integer, we have
\begin{gather*}
\sum_{k=0}^{n}\binom{n}{k}\sqrt{y^{2}+4t}^{n-k}\left(2^{k}L_{k}(y,t)-2y^{k}\right)B_{n-k}=n2^{n-2}(y^{2}+4t)L_{n-1}(y,t).\\
\sum_{k=0}^{n}\binom{n}{k}\sqrt{y^{2}+4t}^{n-k}(1-2^{n-k})\left(2^{k}L_{k}(y,t)+2y^{k}\right)B_{n-k}=-n2^{n-2}(y^{2}+4t)L_{n-1}(y,t).
\end{gather*}
\end{corollary}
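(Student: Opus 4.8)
The plan is to obtain this corollary as the Case~1 specialization of the preceding theorem, that is, of the two Bernoulli relations whose right-hand sides are $n2^{n-2}(x_{1}-x_{2})^{2}S_{n-2}(x_{1}+x_{2})$ and, in \eqref{a98a}, its negative. The entire argument reduces to translating the symmetric-function data into the language of the bivariate Fibonacci and Lucas polynomials via their Binet formulas; once this dictionary is fixed, the result follows by direct substitution with no further combinatorial input.

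First I would record the specialization $x_{1}=\lambda_{1}=\frac{y+\sqrt{y^{2}+4t}}{2}$ and $x_{2}=\lambda_{2}=\frac{y-\sqrt{y^{2}+4t}}{2}$, which are precisely the roots of $\lambda^{2}-y\lambda-t=0$ underlying the Binet formulas. From these I read off the four building blocks appearing in the theorem: $x_{1}-x_{2}=\sqrt{y^{2}+4t}$, so that $(x_{1}-x_{2})^{n-k}=\sqrt{y^{2}+4t}^{\,n-k}$ and $(x_{1}-x_{2})^{2}=y^{2}+4t$; next $x_{1}+x_{2}=y$, so that $(x_{1}+x_{2})^{k}=y^{k}$; then $\phi_{k}(x_{1}+x_{2})=x_{1}^{k}+x_{2}^{k}=L_{k}(y,t)$; and finally $S_{n-2}(x_{1}+x_{2})=\frac{x_{1}^{n-1}-x_{2}^{n-1}}{x_{1}-x_{2}}=F_{n-1}(y,t)$.

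Next I would substitute these four identities into the two Bernoulli relations. On the left, each factor $2^{k}\phi_{k}(x_{1}+x_{2})\mp 2(x_{1}+x_{2})^{k}$ becomes $2^{k}L_{k}(y,t)\mp 2y^{k}$, the prefactor $(x_{1}-x_{2})^{n-k}$ becomes $\sqrt{y^{2}+4t}^{\,n-k}$, while the factor $(1-2^{n-k})$ in the second relation, the binomial coefficient $\binom{n}{k}$, and the parity restriction $n\equiv k[2]$ all carry over verbatim. On the right, $\pm n2^{n-2}(x_{1}-x_{2})^{2}S_{n-2}(x_{1}+x_{2})$ becomes $\pm n2^{n-2}(y^{2}+4t)F_{n-1}(y,t)$, which produces the two claimed convolution identities.

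The only point demanding genuine attention---the step where an index or sign slip would corrupt the answer---is the identification of $S_{n-2}(x_{1}+x_{2})$ on the right-hand side. Here one must align the index convention $S_{n}(x_{1}+x_{2})=\frac{x_{1}^{n+1}-x_{2}^{n+1}}{x_{1}-x_{2}}$ with the Binet formula $F_{m}(y,t)=\frac{\lambda_{1}^{m}-\lambda_{2}^{m}}{\lambda_{1}-\lambda_{2}}$, so that $S_{n-2}$ matches the bivariate Fibonacci polynomial $F_{n-1}(y,t)$ (consistent with the analogous Genocchi specialization in Corollary~\ref{1}) rather than a Lucas polynomial; all remaining manipulations are purely mechanical replacements and carry no convergence or combinatorial obstacle.
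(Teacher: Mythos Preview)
Your approach is identical to the paper's: the corollary is obtained exactly as the Case~1 specialization of the preceding Bernoulli theorem, with no separate argument given. Your dictionary $x_{1}-x_{2}=\sqrt{y^{2}+4t}$, $x_{1}+x_{2}=y$, $\phi_{k}=L_{k}(y,t)$, $S_{n-2}=F_{n-1}(y,t)$ is precisely the intended one.

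One point worth flagging: your substitution yields $\pm n\,2^{n-2}(y^{2}+4t)\,F_{n-1}(y,t)$ on the right-hand side, whereas the corollary as stated in the paper writes $L_{n-1}(y,t)$. Your computation is the correct one --- $S_{n-2}(x_{1}+x_{2})=\dfrac{x_{1}^{n-1}-x_{2}^{n-1}}{x_{1}-x_{2}}=F_{n-1}(y,t)$, in agreement with the Genocchi specialization in Corollary~\ref{1} --- so this is a typographical slip in the paper's statement rather than an error in your derivation. Similarly, you rightly note that the parity restriction $n\equiv k\;[2]$ from the theorem should carry over, even though it is dropped in the corollary as printed.
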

\begin{corollary}
Let $n$ be a positive integer, we have
\begin{gather*}
\sum_{\underset{n\equiv k[2]}{k=0}}^{n}\binom{n}{k}\sqrt{y^{2}+4t}^{n-k}(n-k-1)\left(2^{k}L_{k}(y,t)-2y^{k}\right)B_{n-k}=n(1-n)(y^{2}+4t)^{2}x^{n-2}.\\
\sum_{\underset{n\equiv k[2]}{k=0}}^{n}\binom{n}{k}\sqrt{y^{2}+4t}^{n-k}(2^{n-k}-	1)(n-k-1)\left(2^{k}L_{k}(y,t)+2y^{k}\right)B_{n-k}=-n(1-n)(y^{2}+4t)^{2}y^{n-2}.
\end{gather*}
\end{corollary}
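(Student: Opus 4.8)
The plan is to obtain both displayed relations as specializations of the general Bernoulli- and Genocchi-type symmetric-function identities already proved for an arbitrary two-letter alphabet, evaluated at the alphabet that realizes the bivariate Fibonacci and Lucas polynomials. Concretely, I set $x_{1}=\frac{y+\sqrt{y^{2}+4t}}{2}$ and $x_{2}=\frac{y-\sqrt{y^{2}+4t}}{2}$, the roots of $\lambda^{2}-y\lambda-t=0$. First I record the elementary consequences: $x_{1}+x_{2}=y$, $x_{1}-x_{2}=\sqrt{y^{2}+4t}$, hence $(x_{1}-x_{2})^{2}=y^{2}+4t$ and $x_{1}x_{2}=-t$. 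Comparing the Binet formulas for $F_{n}(y,t)$ and $L_{n}(y,t)$ with the definitions of $S_{n}$ and $\phi_{n}$ then gives the dictionary entries $\phi_{k}(x_{1}+x_{2})=x_{1}^{k}+x_{2}^{k}=L_{k}(y,t)$ and $(x_{1}+x_{2})^{k}=y^{k}$, together with $\frac{x_{1}^{n-1}-x_{2}^{n-1}}{x_{1}-x_{2}}=F_{n-1}(y,t)$.

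For the first relation I would substitute this dictionary directly into \eqref{fvd}: the bracket $2^{k}\phi_{k}(x_{1}+x_{2})-2(x_{1}+x_{2})^{k}$ becomes $2^{k}L_{k}(y,t)-2y^{k}$, the prefactor $(x_{1}-x_{2})^{n-k}$ becomes $\sqrt{y^{2}+4t}^{\,n-k}$, while the right-hand side $n(1-n)(x_{1}-x_{2})^{2}(x_{1}+x_{2})^{n-2}$ becomes $n(1-n)(y^{2}+4t)\,y^{n-2}$, giving the claimed identity. For the second relation I would start instead from the Genocchi identity \eqref{qxaa1}, which carries the same $(n-k-1)$ weight and the same bracket $2^{k}\phi_{k}(x_{1}+x_{2})+2(x_{1}+x_{2})^{k}$, and apply the conversion $G_{n-k}=2\bigl(1-2^{n-k}\bigr)B_{n-k}$. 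Absorbing the factor $2$ against the $-2n(n-1)$ on the right, and rewriting $1-2^{n-k}=-(2^{n-k}-1)$ with $-n(n-1)=n(1-n)$, produces exactly the weight $(2^{n-k}-1)(n-k-1)$ in front of $B_{n-k}$ and the sign flip on the right-hand side; specializing through the dictionary then yields the second identity.

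The one genuine subtlety, and the step I would treat most carefully, is the justification of the parity restriction $n\equiv k\ [2]$ and the fact that the final right-hand sides are honest polynomials in $y$ and $t$ despite the surd $\sqrt{y^{2}+4t}^{\,n-k}$. Here I would observe that the weight $(n-k-1)$ annihilates the term $n-k=1$, which is the only odd index at which $B_{n-k}$ is nonzero (namely $B_{1}=-\tfrac12$), while $B_{n-k}=0$ for every odd $n-k\ge 3$. Consequently only even values of $n-k$ survive, which justifies the subscript $n\equiv k\ [2]$, and for these $\sqrt{y^{2}+4t}^{\,n-k}=(y^{2}+4t)^{(n-k)/2}$ is polynomial. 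The remaining work is purely bookkeeping: tracking the sign introduced by $1-2^{n-k}=-(2^{n-k}-1)$ together with the factor $n(1-n)$, and reading off that each parent right-hand side $(x_{1}-x_{2})^{2}(x_{1}+x_{2})^{n-2}$ collapses to $(y^{2}+4t)\,y^{n-2}$. No analytic input beyond the already-established identities \eqref{fvd} and \eqref{qxaa1} is needed.
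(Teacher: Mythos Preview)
Your approach is correct and is exactly the paper's: the corollary is obtained by specializing \eqref{fvd} and \eqref{dw} (equivalently \eqref{qxaa1} after the conversion $G_{m}=2(1-2^{m})B_{m}$, just as you do) at $x_{1},x_{2}=\tfrac{y\pm\sqrt{y^{2}+4t}}{2}$, and your parity argument that $(n-k-1)B_{n-k}=0$ for every odd $n-k$ cleanly justifies the restriction $n\equiv k\ [2]$ that the paper adds without comment. Note, however, that your own computation yields the right-hand sides $\pm n(1-n)(y^{2}+4t)\,y^{n-2}$, whereas the printed corollary has $(y^{2}+4t)^{2}$ and, in the first line, $x^{n-2}$; these are typographical slips in the statement, not a gap in your argument.
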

\begin{corollary}
Let $n$ be a positive integer, we have
\begin{gather*}
\sum_{\underset{n\equiv k[2]}{k=0}}^{n}\binom{n}{k}\sqrt{y^{2}+4t}^{n-k}\frac{B_{n-k-2}}{n-k-2}\left(2^{k}L_{k}(y,t)-2y^{k}\right)=\frac{2^{m+2}(L_{m+2}(y,t)-2x^{m+2})}{(y^{2}+4t)(m+2)(m+1)}-y^{m}.\\
\sum_{\underset{n\equiv k[2]}{k=0}}^{n}\binom{n}{k}(2^{n-k}-	1)\sqrt{y^{2}+4t}^{n-k}\frac{B_{n-k-2}}{n-k-2}\left(2^{k}L_{k}(y,t)+2y^{k}\right)=y^{m}.
\end{gather*}
\end{corollary}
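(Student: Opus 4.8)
The plan is to obtain this corollary as the Case~1 specialization of the theorem that establishes identities \eqref{wcx} and \eqref{qm}, which I may invoke as already proved. Set $x_{1}=\frac{y+\sqrt{y^{2}+4t}}{2}$ and $x_{2}=\frac{y-\sqrt{y^{2}+4t}}{2}$, the two roots of $\lambda^{2}-y\lambda-t=0$. The substitution rests on four elementary evaluations: $x_{1}+x_{2}=y$ and $x_{1}-x_{2}=\sqrt{y^{2}+4t}$, hence $(x_{1}-x_{2})^{2}=y^{2}+4t$, together with the Binet identification $\phi_{k}(x_{1}+x_{2})=x_{1}^{k}+x_{2}^{k}=L_{k}(y,t)$ for the bivariate Lucas polynomial. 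First I would record these four dictionary entries explicitly, so that every symmetric-function symbol occurring in \eqref{wcx} and \eqref{qm} maps to a named bivariate object.

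Next I would rewrite the left-hand side of \eqref{wcx}: each factor $(x_{1}-x_{2})^{m-k}$ turns into $\sqrt{y^{2}+4t}^{\,m-k}$, the inner combination $2^{k}\phi_{k}(x_{1}+x_{2})-2(x_{1}+x_{2})^{k}$ becomes $2^{k}L_{k}(y,t)-2y^{k}$, while $\frac{B_{m-k-2}}{m-k-2}$ and the binomial coefficient are untouched; on the right-hand side $\phi_{m+2}(x_{1}+x_{2})\mapsto L_{m+2}(y,t)$, $(x_{1}+x_{2})^{m+2}\mapsto y^{m+2}$, $(x_{1}-x_{2})^{2}\mapsto y^{2}+4t$, and $(x_{1}+x_{2})^{m}\mapsto y^{m}$. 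This reproduces the first displayed identity of the corollary. I would then apply the identical dictionary to \eqref{qm}: the left-hand side transforms in the same way, now carrying the plus sign in the inner bracket and the weight $2^{m-k+2}-1$, and the right-hand side $(e_{1}+e_{2})^{m}=(x_{1}+x_{2})^{m}$ collapses to $y^{m}$, yielding the second displayed identity. No generating-function manipulation or resummation is required at this stage, since both parent identities are assumed.

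The only genuine care here is notational rather than mathematical, and it is where I expect the main obstacle to lie: the corollary as typeset mixes the indices $n$ and $m$, so I would fix the running variable to $m$ throughout (matching the parity condition $m\equiv k\,[2]$); read the stray $x$ in the right-hand-side numerator as $y$; and carry the scalar constants over verbatim from the parent theorem. In particular the terminal term of the first identity should read $-2y^{m}$ (inherited from the $-2(x_{1}+x_{2})^{m}$ of \eqref{wcx}), and the weight in the second identity should read $2^{m-k+2}-1$ (inherited from \eqref{qm}) rather than $2^{n-k}-1$. Once these bookkeeping matches are reconciled, the corollary is an immediate consequence of the specialization, with no analytic step remaining.
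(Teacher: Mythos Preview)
Your proposal is correct and matches the paper's own approach: the corollary is stated without proof in the paper, appearing under Case~1 of Section~3.2 as an immediate specialization of the theorem containing \eqref{wcx} and \eqref{qm} via the substitution $x_{1}=\frac{y+\sqrt{y^{2}+4t}}{2}$, $x_{2}=\frac{y-\sqrt{y^{2}+4t}}{2}$. Your dictionary ($x_{1}+x_{2}=y$, $x_{1}-x_{2}=\sqrt{y^{2}+4t}$, $\phi_{k}\mapsto L_{k}(y,t)$) is exactly the mechanism the paper intends, and your identification of the typographical slips in the printed corollary (the $n$/$m$ conflation, the stray $x$ for $y$, the missing factor $2$ before $y^{m}$, and $2^{n-k}-1$ in place of $2^{m-k+2}-1$) is accurate and useful.
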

\begin{corollary}\label{2}
Let $n$ be a positive integer, we have
\begin{equation*}
\sum_{\underset{n\equiv k[2]}{k=0}}^{n}\binom{n}{k}\left(\frac{\sqrt{y^{2}+4t}}{2}^{n-k}\right) \left(2^{k}L_{k}(y,t)+2y^{k}\right)E_{n-k}=2^{1-n}\left((2y+\sqrt{y^{2}+4t})^{n}+(2y-\sqrt{y^{2}+4t})^{n}\right).
\end{equation*}
\end{corollary}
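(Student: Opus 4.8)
The plan is to obtain this corollary directly as the Case~1 specialization of Theorem~\ref{gj}. I would set $x_{1}=\frac{y+\sqrt{y^{2}+4t}}{2}$ and $x_{2}=\frac{y-\sqrt{y^{2}+4t}}{2}$, which are exactly the roots $\lambda_{1},\lambda_{2}$ of the characteristic equation $\lambda^{2}-y\lambda-t=0$. The Binet formula for the bivariate Lucas polynomials then gives $\phi_{k}(x_{1}+x_{2})=x_{1}^{k}+x_{2}^{k}=\lambda_{1}^{k}+\lambda_{2}^{k}=L_{k}(y,t)$, so the mixed factor $2^{k}\phi_{k}(x_{1}+x_{2})+2(x_{1}+x_{2})^{k}$ occurring in \eqref{sxv} becomes $2^{k}L_{k}(y,t)+2y^{k}$, which is precisely the factor in the asserted identity.

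First I would record the elementary symmetric simplifications. A direct computation gives $x_{1}+x_{2}=y$ and $x_{1}-x_{2}=\sqrt{y^{2}+4t}$, hence $\frac{x_{1}-x_{2}}{2}=\frac{\sqrt{y^{2}+4t}}{2}$, which produces the coefficient $\left(\frac{\sqrt{y^{2}+4t}}{2}\right)^{n-k}$ on the left-hand side. For the right-hand side I would evaluate the two linear combinations appearing as bases of the $n$-th powers:
\begin{align*}
3x_{1}+x_{2}&=\frac{3\left(y+\sqrt{y^{2}+4t}\right)+\left(y-\sqrt{y^{2}+4t}\right)}{2}=2y+\sqrt{y^{2}+4t},\\
x_{1}+3x_{2}&=\frac{\left(y+\sqrt{y^{2}+4t}\right)+3\left(y-\sqrt{y^{2}+4t}\right)}{2}=2y-\sqrt{y^{2}+4t}.
\end{align*}

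With these substitutions in place, the left-hand side of \eqref{sxv} becomes $\sum_{\underset{n\equiv k[2]}{k=0}}^{n}\binom{n}{k}\left(\frac{\sqrt{y^{2}+4t}}{2}\right)^{n-k}\left(2^{k}L_{k}(y,t)+2y^{k}\right)E_{n-k}$, while the right-hand side $2^{1-n}\left((3x_{1}+x_{2})^{n}+(x_{1}+3x_{2})^{n}\right)$ collapses to $2^{1-n}\left((2y+\sqrt{y^{2}+4t})^{n}+(2y-\sqrt{y^{2}+4t})^{n}\right)$, which is exactly the claimed identity. Since every step is a substitution into the already-proved formula \eqref{sxv} together with the Binet formula, there is no genuine obstacle here; the only points requiring a little care are verifying that the two combinations $3x_{1}+x_{2}$ and $x_{1}+3x_{2}$ reduce to the clean expressions displayed above, and noting that the parity restriction $n\equiv k\,[2]$ is inherited unchanged from Theorem~\ref{gj}, as it originates from $E_{2j+1}=0$ for $j>0$ and is independent of the choice of alphabet.
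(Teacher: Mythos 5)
Your proposal is correct and is exactly the paper's route: Corollary~\ref{2} is obtained there as the Case~1 specialization of Theorem~\ref{gj}, substituting $x_{1}=\frac{y+\sqrt{y^{2}+4t}}{2}$, $x_{2}=\frac{y-\sqrt{y^{2}+4t}}{2}$ into \eqref{sxv} and identifying $\phi_{k}(x_{1}+x_{2})=L_{k}(y,t)$, $x_{1}+x_{2}=y$, $x_{1}-x_{2}=\sqrt{y^{2}+4t}$ via the Binet formula. Your explicit verification that $3x_{1}+x_{2}=2y+\sqrt{y^{2}+4t}$ and $x_{1}+3x_{2}=2y-\sqrt{y^{2}+4t}$ supplies the computation the paper leaves implicit, and the parity restriction is indeed inherited unchanged from the theorem.
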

\begin{itemize}
\item If we take $t=1$ in the corollaries \ref{1}$-$\ref{2}, we can   obtain the connections between Genocchi, Bernoulli and Euler  numbers and
 Fibonacci and Lucas polynomials.
\item By setting $y=t=1$ in the corollaries \ref{1}$-$\ref{2}, We derive connections between Genocchi, Bernoulli and Euler  numbers and
 Fibonacci and Lucas numbers.
\end{itemize}
\item [\textbf{Case 2.}]
Let $x_{1}=3x+\sqrt{9x^{2}-y}$ and $x_{2}=3x-\sqrt{9x^{2}-y}$, then we have the following corollaries of relations between Genocchi, Bernoulli and Euler  numbers with bivariate balancing,  bivariate Lucas-balancing polynomials. 
\begin{corollary}\label{3}
Let $n$ be a positive integer, we have
\begin{equation*}
\sum_{\underset{n\equiv k[2]}{k=0}}^{n}\binom{n}{k}(2\sqrt{9x^{2}-y})^{n-k}\left(2^{k+1}C_{k}(x,y)+2(6x)^{k}\right)G_{n-k}=-n2^{n+1}(9x^{2}-y)^{2}B^{*}_{n-1}(x,y).
\end{equation*}
\end{corollary}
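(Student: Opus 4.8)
The plan is to obtain this corollary with essentially no new analytic work: it is the specialization of Theorem~\ref{th6} to the alphabet coming from the bivariate balancing recurrence, so all that is really needed is the correct dictionary between the symmetric functions $\phi$, $S$ and the polynomials $C_{n}$, $B^{*}_{n}$.

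First I would take $A=\{x_{1},x_{2}\}$ with $x_{1}=3x+\sqrt{9x^{2}-y}$ and $x_{2}=3x-\sqrt{9x^{2}-y}$, noting that these are precisely the roots $\lambda_{1},\lambda_{2}$ of the characteristic equation $\lambda^{2}-6x\lambda+y=0$ attached to the bivariate balancing and Lucas-balancing polynomials with parameters $(x,y)$. The elementary computations I need are
\begin{equation*}
x_{1}-x_{2}=2\sqrt{9x^{2}-y},\qquad x_{1}+x_{2}=6x,\qquad (x_{1}-x_{2})^{2}=4(9x^{2}-y),
\end{equation*}
so that $(x_{1}-x_{2})^{n-k}=(2\sqrt{9x^{2}-y})^{n-k}$ and $(x_{1}+x_{2})^{k}=(6x)^{k}$.

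Next I would translate the two symmetric functions by means of the Binet formulas. Since $\phi_{k}(x_{1}+x_{2})=x_{1}^{k}+x_{2}^{k}=\lambda_{1}^{k}+\lambda_{2}^{k}=2C_{k}(x,y)$ (here the Lucas-balancing Binet formula must be read as $C_{k}=\tfrac12(\lambda_{1}^{k}+\lambda_{2}^{k})$, which is what the initial data $C_{0}=1$, $C_{1}=3x$ force), the bracket $2^{k}\phi_{k}(x_{1}+x_{2})+2(x_{1}+x_{2})^{k}$ becomes $2^{k+1}C_{k}(x,y)+2(6x)^{k}$, matching the summand in the corollary. Likewise
\begin{equation*}
S_{n-2}(x_{1}+x_{2})=\frac{x_{1}^{n-1}-x_{2}^{n-1}}{x_{1}-x_{2}}=\frac{\lambda_{1}^{n-1}-\lambda_{2}^{n-1}}{\lambda_{1}-\lambda_{2}}=B^{*}_{n-1}(x,y).
\end{equation*}

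Finally I would insert all of these into the identity of Theorem~\ref{th6}. The left-hand side then reproduces term-by-term the sum in the corollary, while the right-hand side simplifies to
\begin{equation*}
-n2^{n-1}(x_{1}-x_{2})^{2}S_{n-2}(x_{1}+x_{2})=-n2^{n-1}\cdot 4(9x^{2}-y)\,B^{*}_{n-1}(x,y)=-n2^{n+1}(9x^{2}-y)\,B^{*}_{n-1}(x,y),
\end{equation*}
which is the claimed identity. There is no genuine obstacle, since the whole argument is just the substitution $x_{1},x_{2}\mapsto\lambda_{1},\lambda_{2}$ into an already-proved relation; the only point I would double-check is the bookkeeping of the power of $(9x^{2}-y)$, as the factor $(x_{1}-x_{2})^{2}$ contributes exactly one power of it, so the correct exponent on the right is $1$ and the ``$(9x^{2}-y)^{2}$'' printed in the statement appears to be a typographical slip for $(9x^{2}-y)$.
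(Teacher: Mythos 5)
Your proof is correct and is exactly the route the paper intends: the corollary is the specialization of Theorem~\ref{th6} to $x_{1}=3x+\sqrt{9x^{2}-y}$, $x_{2}=3x-\sqrt{9x^{2}-y}$, translating $\phi_{k}(x_{1}+x_{2})=2C_{k}(x,y)$ and $S_{n-2}(x_{1}+x_{2})=B^{*}_{n-1}(x,y)$ via the Binet formulas, with no further argument given in the paper. You are also right that the printed exponent on $(9x^{2}-y)$ is a typographical slip: since $(x_{1}-x_{2})^{2}=4(9x^{2}-y)$, the right-hand side should be $-n2^{n+1}(9x^{2}-y)B^{*}_{n-1}(x,y)$, which is consistent with the analogous Bernoulli-number corollary in the same section, where $(9x^{2}-y)$ appears to the first power.
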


\begin{corollary}
Let $n$ be a positive integer, we have
\begin{equation*}
\sum_{k=0}^{n}\binom{n}{k}(2\sqrt{9x^{2}-y})^{n-k}(n-k-1)\left(2^{k+1}C_{k}(x,y)+2(6x)^{k}\right)G_{n-k}=8n(1-n)(9x^{2}-y)(6x)^{n-2}.
\end{equation*}
\end{corollary}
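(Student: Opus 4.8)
The plan is to derive this corollary as a direct specialization of the theorem whose conclusion is \eqref{qxaa1}, applied to the alphabet $A=\{x_{1},x_{2}\}$ with $x_{1}=3x+\sqrt{9x^{2}-y}$ and $x_{2}=3x-\sqrt{9x^{2}-y}$. These are precisely the roots of the characteristic equation $\lambda^{2}-6x\lambda+y=0$ governing the bivariate balancing and Lucas-balancing polynomials, so their Binet formulas will convert the generic symmetric functions $\phi_{k}(x_{1}+x_{2})$ and the monomials $(x_{1}\pm x_{2})$ into the quantities $C_{k}(x,y)$, $6x$ and $\sqrt{9x^{2}-y}$ that appear in the statement.

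First I would record the elementary data attached to this choice of alphabet, namely $x_{1}+x_{2}=6x$, $x_{1}-x_{2}=2\sqrt{9x^{2}-y}$, and hence $(x_{1}-x_{2})^{2}=4(9x^{2}-y)$. Next, from the Binet formula $C_{k}(x,y)=\tfrac{1}{2}(x_{1}^{k}+x_{2}^{k})=\tfrac{1}{2}\phi_{k}(x_{1}+x_{2})$, so that $2^{k}\phi_{k}(x_{1}+x_{2})=2^{k+1}C_{k}(x,y)$, together with $(x_{1}+x_{2})^{k}=(6x)^{k}$, the bracketed factor $2^{k}\phi_{k}(x_{1}+x_{2})+2(x_{1}+x_{2})^{k}$ in \eqref{qxaa1} becomes exactly $2^{k+1}C_{k}(x,y)+2(6x)^{k}$, while the prefactor $(x_{1}-x_{2})^{n-k}$ becomes $(2\sqrt{9x^{2}-y})^{n-k}$. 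This turns the whole left-hand side of \eqref{qxaa1} into the left-hand side of the asserted identity.

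Finally, on the right-hand side of \eqref{qxaa1} I would insert $(x_{1}-x_{2})^{2}=4(9x^{2}-y)$ and $(x_{1}+x_{2})^{n-2}=(6x)^{n-2}$ into $-2n(n-1)(x_{1}-x_{2})^{2}(x_{1}+x_{2})^{n-2}$, which yields $-8n(n-1)(9x^{2}-y)(6x)^{n-2}=8n(1-n)(9x^{2}-y)(6x)^{n-2}$, exactly the claimed right-hand side.

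Because every step is a substitution into an already proved identity, there is no genuine analytic difficulty here. The single point requiring care — and the one I would flag as the main pitfall — is to use the correct Binet form $C_{k}(x,y)=\tfrac{1}{2}(x_{1}^{k}+x_{2}^{k})$ for the Lucas-balancing polynomials (the \emph{sum} of $k$-th powers, consistent with $C_{0}=1$, $C_{1}=3x$ and the recurrence $C_{n}=6xC_{n-1}-yC_{n-2}$), so that $\phi_{k}(x_{1}+x_{2})=2C_{k}(x,y)$; with this identification fixed, the corollary follows with no further computation.
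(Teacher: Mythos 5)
Your proposal is correct and coincides with the paper's own (implicit) derivation: the corollary is exactly the specialization of \eqref{qxaa1} to $x_{1}=3x+\sqrt{9x^{2}-y}$, $x_{2}=3x-\sqrt{9x^{2}-y}$, using $\phi_{k}(x_{1}+x_{2})=2C_{k}(x,y)$, $x_{1}+x_{2}=6x$ and $(x_{1}-x_{2})^{2}=4(9x^{2}-y)$. Your cautionary remark about the Binet form $C_{k}(x,y)=\tfrac{1}{2}\left(x_{1}^{k}+x_{2}^{k}\right)$ is well placed, since the paper's displayed Binet formula for the Lucas-balancing polynomials carries a sign typo that your identification silently corrects.
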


\begin{corollary}
Let $n$ be a positive integer, we have
\begin{equation*}
\sum_{\underset{n\equiv k[2]}{k=0}}^{n}\binom{n}{k}(2\sqrt{9x^{2}-y})^{n-k}\frac{G_{n-k-2}}{n-k-2}\left(2^{k+1}C_{k}(x,y)+2(6x)^{k}\right)=-8(9x^{2}-y)(6x)^{n}.
\end{equation*}
\end{corollary}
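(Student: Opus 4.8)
The plan is to obtain this identity as the Case~2 specialization of the general Genocchi identity \eqref{56}, in exactly the way the two preceding corollaries were extracted from their parent theorems. So I would not repeat any generating-function manipulation; instead I would substitute the alphabet $x_{1}=3x+\sqrt{9x^{2}-y}$ and $x_{2}=3x-\sqrt{9x^{2}-y}$ directly into \eqref{56} and then translate each symmetric quantity into balancing and Lucas-balancing polynomials. The parity condition $n\equiv k\,[2]$ in the summation carries over unchanged, since it originates from $G_{2j+1}=0$ and is already present in \eqref{56}.

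First I would record the three elementary consequences of this choice of alphabet, which are the only facts about $x_{1},x_{2}$ that \eqref{56} actually uses: $x_{1}+x_{2}=6x$, $x_{1}-x_{2}=2\sqrt{9x^{2}-y}$, and hence $(x_{1}-x_{2})^{2}=4(9x^{2}-y)$. Next I would convert the symmetric function inside the summand. Since $x_{1},x_{2}$ are precisely the roots of $\lambda^{2}-6x\lambda+y=0$, the Binet formula for the bivariate Lucas-balancing polynomials yields $\phi_{k}(x_{1}+x_{2})=x_{1}^{k}+x_{2}^{k}=2C_{k}(x,y)$, so that $2^{k}\phi_{k}(x_{1}+x_{2})=2^{k+1}C_{k}(x,y)$, while the remaining summand $2(x_{1}+x_{2})^{k}$ becomes $2(6x)^{k}$. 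The right-hand side of \eqref{56} contains no symmetric function, only the quantities $(x_{1}-x_{2})^{2}$ and $(x_{1}+x_{2})^{m}$ already computed.

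Finally I would substitute these expressions into both sides of \eqref{56}, writing $n$ for the parameter $m$. The left-hand side becomes the stated sum $\sum_{n\equiv k[2]}\binom{n}{k}(2\sqrt{9x^{2}-y})^{n-k}\tfrac{G_{n-k-2}}{n-k-2}\bigl(2^{k+1}C_{k}(x,y)+2(6x)^{k}\bigr)$, and the right-hand side becomes $-2\cdot 4(9x^{2}-y)\cdot(6x)^{n}=-8(9x^{2}-y)(6x)^{n}$, which is exactly the claimed value. The only real point requiring care—the ``main obstacle,'' such as it is—is the bookkeeping of constants: one must read off the Lucas-balancing Binet identity $x_{1}^{k}+x_{2}^{k}=2C_{k}(x,y)$ correctly so that the factor $2^{k+1}$ (rather than $2^{k}$) appears in the summand, and one must track the factor $-2\cdot(x_{1}-x_{2})^{2}=-8(9x^{2}-y)$ on the right. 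No new analytic input is needed beyond \eqref{56} and the Binet formulas stated in the Introduction.
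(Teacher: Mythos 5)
Your proposal is correct and is exactly the paper's (implicit) argument: the corollary is obtained by specializing Theorem \eqref{56} to the alphabet $x_{1}=3x+\sqrt{9x^{2}-y}$, $x_{2}=3x-\sqrt{9x^{2}-y}$, using $x_{1}+x_{2}=6x$, $(x_{1}-x_{2})^{2}=4(9x^{2}-y)$, and $\phi_{k}(x_{1}+x_{2})=2C_{k}(x,y)$ to produce the factor $2^{k+1}C_{k}(x,y)$ and the right-hand side $-8(9x^{2}-y)(6x)^{n}$. Note only that you correctly read the Lucas-balancing Binet formula as $C_{k}=\tfrac{1}{2}(\lambda_{1}^{k}+\lambda_{2}^{k})$, silently fixing the sign typo in the paper's stated formula.
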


\begin{corollary}
Let $n$ be a positive integer, we have
\begin{gather*}
\sum_{k=0}^{n}\binom{n}{k}(2\sqrt{9x^{2}-y})^{n-k}\left(2^{k}C_{k}(x,y)-(6x)^{k}\right)B_{n-k}=n2^{n-1}(9x^{2}-y)B^{*}_{n-1}(x,y).\\
\sum_{k=0}^{n}\binom{n}{k}(1-2^{n-k})\sqrt{9x^{2}-y}^{n-k}\left(2^{k}C_{k}(x,y)+(6x)^{k}\right)B_{n-k}=-n2^{n-1}(9x^{2}-y)B^{*}_{n-1}(x,y).
\end{gather*}
\end{corollary}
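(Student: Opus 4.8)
The plan is to obtain this corollary as the Case~2 specialization of the two Bernoulli-number convolution relations established earlier, namely the theorem whose second identity is \eqref{a98a} (its first, unlabelled, relation together with \eqref{a98a}). First I would set $x_{1}=3x+\sqrt{9x^{2}-y}$ and $x_{2}=3x-\sqrt{9x^{2}-y}$, the two roots of $\lambda^{2}-6x\lambda+y=0$, so that $x_{1}+x_{2}=6x$, $x_{1}x_{2}=y$ and $x_{1}-x_{2}=2\sqrt{9x^{2}-y}$; consequently $(x_{1}-x_{2})^{2}=4(9x^{2}-y)$ and $(x_{1}-x_{2})^{n-k}=2^{n-k}(\sqrt{9x^{2}-y})^{\,n-k}$.

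Next I would translate each symmetric function appearing in the parent theorem into the bivariate balancing / Lucas-balancing language through the Binet formulas, since $\lambda_{1}=x_{1}$ and $\lambda_{2}=x_{2}$ are exactly the roots occurring there. This gives $\phi_{k}(x_{1}+x_{2})=\lambda_{1}^{k}+\lambda_{2}^{k}=2C_{k}(x,y)$ (using the Lucas-balancing Binet formula $C_{k}=\tfrac12(\lambda_{1}^{k}+\lambda_{2}^{k})$), $(x_{1}+x_{2})^{k}=(6x)^{k}$, and $S_{n-2}(x_{1}+x_{2})=\dfrac{\lambda_{1}^{n-1}-\lambda_{2}^{n-1}}{\lambda_{1}-\lambda_{2}}=B^{*}_{n-1}(x,y)$.

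Substituting these identifications into the first relation of the parent theorem, the bracket $2^{k}\phi_{k}(x_{1}+x_{2})-2(x_{1}+x_{2})^{k}$ becomes $2\bigl(2^{k}C_{k}(x,y)-(6x)^{k}\bigr)$, while the right-hand side $n2^{n-2}(x_{1}-x_{2})^{2}S_{n-2}(x_{1}+x_{2})$ becomes $n2^{n}(9x^{2}-y)B^{*}_{n-1}(x,y)$; cancelling the common factor $2$ yields the first asserted identity. The second identity follows in exactly the same way from \eqref{a98a}, where the bracket is instead $2^{k}\phi_{k}(x_{1}+x_{2})+2(x_{1}+x_{2})^{k}=2\bigl(2^{k}C_{k}(x,y)+(6x)^{k}\bigr)$ and the right-hand sign is reversed; alternatively it can be deduced from the first via the classical relation $G_{n}=2(1-2^{n})B_{n}$ used throughout the preceding proofs.

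I expect no genuine obstacle here, as the argument is a direct change of variables; the only delicate points are bookkeeping ones. One must invoke the correct Lucas-balancing Binet formula $C_{k}=\tfrac12(\lambda_{1}^{k}+\lambda_{2}^{k})$ so that $\phi_{k}=2C_{k}$, retain the full factor $2^{n-k}$ coming from $(x_{1}-x_{2})^{n-k}$ in \emph{both} relations (so that the power of $\sqrt{9x^{2}-y}$ should carry the coefficient $2^{n-k}$ in each), and handle the parity restriction $n\equiv k\,[2]$ on the summation index together with the isolated $B_{1}=-\tfrac12$ term precisely as was done in the proof of the parent theorem.
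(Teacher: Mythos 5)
Your proof is correct and follows exactly the route the paper intends: the corollary is the Case~2 specialization $x_{1}=3x+\sqrt{9x^{2}-y}$, $x_{2}=3x-\sqrt{9x^{2}-y}$ of the parent Bernoulli-number theorem, using $\phi_{k}(x_{1}+x_{2})=2C_{k}(x,y)$, $(x_{1}+x_{2})^{k}=(6x)^{k}$, $S_{n-2}(x_{1}+x_{2})=B^{*}_{n-1}(x,y)$ and $(x_{1}-x_{2})^{2}=4(9x^{2}-y)$, followed by cancelling the common factor $2$. Your bookkeeping remark is also well taken: since $(x_{1}-x_{2})^{n-k}=(2\sqrt{9x^{2}-y})^{\,n-k}$ the factor $2^{n-k}$ must appear in \emph{both} identities, so the paper's second displayed formula (which writes $\sqrt{9x^{2}-y}^{\,n-k}$ without the $2$) contains a typo that your derivation corrects.
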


\begin{corollary}
Let $n$ be a positive integer, we have
\begin{gather*}
\sum_{\underset{n\equiv k[2]}{k=0}}^{n}\binom{n}{k}(2\sqrt{9x^{2}-y})^{n-k}(n-k-1)\left(2^{k}C_{k}(x,y)-(6x)^{k}\right)B_{n-k}=2n(1-n)(9x^{2}-y)(6x)^{n-2}.\\
\sum_{\underset{n\equiv k[2]}{k=0}}^{n}\binom{n}{k}(1-2^{n-k})(2\sqrt{9x^{2}-y})^{n-k}(n-k-1)\left(2^{k}C_{k}(x,y)+(6x)^{k}\right)B_{n-k}=-2n(1-n)(9x^{2}-y)
(6x)^{n-2}.
\end{gather*}
\end{corollary}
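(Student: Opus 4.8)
The plan is to derive this corollary as the Case 2 specialization of the theorem carrying identities \eqref{fvd} and \eqref{dw}, taking the alphabet $A=\{x_1,x_2\}$ with $x_1=3x+\sqrt{9x^2-y}$ and $x_2=3x-\sqrt{9x^2-y}$. First I would record the elementary data of this alphabet: since $x_1,x_2$ are the roots of $\lambda^2-6x\lambda+y=0$, one has $x_1+x_2=6x$, $x_1x_2=y$, and $x_1-x_2=2\sqrt{9x^2-y}$. The crucial dictionary is the Binet identification $\phi_k(x_1+x_2)=x_1^k+x_2^k=2C_k(x,y)$ (consistent with $C_0=1$, $C_1=3x$), which converts the symmetric-function data of \eqref{fvd}--\eqref{dw} into bivariate Lucas-balancing polynomials.

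For the first identity I would substitute this data into \eqref{fvd}. The inner bracket becomes $2^k\phi_k(x_1+x_2)-2(x_1+x_2)^k=2^{k+1}C_k(x,y)-2(6x)^k=2\bigl(2^kC_k(x,y)-(6x)^k\bigr)$, while $(x_1-x_2)^{n-k}=(2\sqrt{9x^2-y})^{n-k}$ and the right-hand side becomes $n(1-n)(2\sqrt{9x^2-y})^2(6x)^{n-2}=4n(1-n)(9x^2-y)(6x)^{n-2}$. Dividing both sides by the common factor $2$ produces the first displayed identity, apart from the summation range.

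The one genuine point is the reduction of the range from all $0\le k\le n$ to the parity-restricted range $n\equiv k\ [2]$, and I expect this bookkeeping (rather than any computation) to be where care is needed. Here I would use the vanishing $B_m=0$ for every odd $m\ge 3$: each term with $n-k$ odd and $n-k\ge 3$ drops out automatically. The only surviving odd index is $n-k=1$, i.e. $k=n-1$, where $B_1=-\tfrac12\ne0$; but that term carries the factor $(n-k-1)=0$ and so vanishes as well. Hence every term with $n-k$ odd disappears, leaving exactly the sum over $n\equiv k\ [2]$.

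The second identity follows by the same specialization of \eqref{dw}. Here $2^k\phi_k(x_1+x_2)+2(x_1+x_2)^k=2\bigl(2^kC_k(x,y)+(6x)^k\bigr)$, the factor $(1-2^{n-k})$ and the overall sign are inherited from the passage $G_m=2(1-2^m)B_m$ used to obtain \eqref{dw} from \eqref{qxaa1}, and the identical parity argument restricts the range to $n\equiv k\ [2]$. Cancelling the factor $2$ then yields the stated identity.
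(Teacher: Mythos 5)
Your handling of the first identity and of the parity reduction is correct and is essentially the paper's own (implicit) route: the paper obtains all Case 2 corollaries by substituting $x_{1}=3x+\sqrt{9x^{2}-y}$, $x_{2}=3x-\sqrt{9x^{2}-y}$ into the theorems of Section 3, using $x_{1}+x_{2}=6x$, $x_{1}-x_{2}=2\sqrt{9x^{2}-y}$, $\phi_{k}(x_{1}+x_{2})=2C_{k}(x,y)$, and your observation that the $k=n-1$ term is killed by the factor $(n-k-1)$ while all other odd-index Bernoulli terms vanish is exactly the bookkeeping the paper leaves unsaid.

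The genuine gap is in the second identity: the sign does not come out as stated, and your phrase that the overall sign is ``inherited'' from $G_{m}=2(1-2^{m})B_{m}$ asserts precisely the step that fails. Carrying out your own plan, substituting $G_{n-k}=2(1-2^{n-k})B_{n-k}$ into \eqref{qxaa1} and dividing by $2$ gives
\begin{equation*}
\sum_{k=0}^{n}\binom{n}{k}(1-2^{n-k})(x_{1}-x_{2})^{n-k}(n-k-1)\left(2^{k}\phi_{k}(x_{1}+x_{2})+2(x_{1}+x_{2})^{k}\right)B_{n-k}=-n(n-1)(x_{1}-x_{2})^{2}(x_{1}+x_{2})^{n-2},
\end{equation*}
and $-n(n-1)=+n(1-n)$; this differs in sign (as well as in the presence of the factor $1-2^{n-k}$) from \eqref{dw} as printed, which is itself erroneous. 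Specializing to the balancing alphabet and cancelling $2$ therefore yields right-hand side $+2n(1-n)(9x^{2}-y)(6x)^{n-2}$, the \emph{negative} of the corollary's stated right-hand side. A check at $n=2$ confirms this: the only surviving term is $k=0$, namely $\binom{2}{0}(1-2^{2})\cdot 4(9x^{2}-y)\cdot 1\cdot\left(C_{0}(x,y)+1\right)B_{2}=-4(9x^{2}-y)$, which equals $2n(1-n)(9x^{2}-y)$ and not the stated $-2n(1-n)(9x^{2}-y)=+4(9x^{2}-y)$. So the second identity as printed is false, the error propagating from the sign error in \eqref{dw}; a correct write-up must either flip the stated sign or replace the factor $(1-2^{n-k})$ by $(2^{n-k}-1)$ (which is what the paper does in the analogous Case 1 corollary). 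Your derivation, done carefully, proves this corrected statement, but as written it claims to establish the printed statement and thereby conceals the discrepancy instead of flagging it.
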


\begin{corollary}
Let $n$ be a positive integer, we have
\begin{gather*}
\sum_{\underset{m\equiv k[2]}{k=0}}^{m}\binom{m}{k}(2\sqrt{9x^{2}-y})^{m-k}\frac{B_{m-k-2}}{m-k-2}\left(2^{k}C_{k}(x,y)-(6x)^{k}\right)=\frac{2^{m+2}(C_{m+2}(x,y)-(6x)^{m+2})}{4(9x^{2}-y)(m+2)(m+1)}-(6x)^{n}.\\
\sum_{\underset{m\equiv k[2]}{k=0}}^{n}\binom{m}{k}(2^{m-k+2}-1)(2\sqrt{9x^{2}-y})^{m-k}\frac{B_{m-k-2}}{m-k-2}\left(2^{k+1}C_{k}(x,y)+2(6x)^{k}\right)=-2(6x)^{m}.
\end{gather*}
\end{corollary}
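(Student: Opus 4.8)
The plan is to obtain both identities not by any fresh generating-function computation but purely by specializing the two general symmetric-function identities \eqref{wcx} and \eqref{qm} to the present Case~2, in which $x_{1}=3x+\sqrt{9x^{2}-y}$ and $x_{2}=3x-\sqrt{9x^{2}-y}$ are the roots of $\lambda^{2}-6x\lambda+y=0$. All the effort reduces to translating the abstract symmetric functions $\phi_{k}(x_{1}+x_{2})$ and the powers of $x_{1}\pm x_{2}$ into bivariate Lucas-balancing polynomials via the relevant Binet formula.

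First I would record the substitution dictionary forced by this choice of $x_{1},x_{2}$: one has $x_{1}-x_{2}=2\sqrt{9x^{2}-y}$, hence $(x_{1}-x_{2})^{m-k}=(2\sqrt{9x^{2}-y})^{m-k}$ and $(x_{1}-x_{2})^{2}=4(9x^{2}-y)$, together with $x_{1}+x_{2}=6x$ and, under the convention $e_{1}=x_{1}$, $e_{2}=x_{2}$ fixed in \eqref{kl}, $e_{1}+e_{2}=6x$. The one point that genuinely needs care is the Binet formula for the Lucas-balancing polynomials: since $C_{n}(x,y)=\tfrac{1}{2}(x_{1}^{n}+x_{2}^{n})=\tfrac{1}{2}\phi_{n}(x_{1}+x_{2})$, we get the uniform relation $\phi_{k}(x_{1}+x_{2})=2C_{k}(x,y)$ valid for every index, including $\phi_{m+2}(x_{1}+x_{2})=2C_{m+2}(x,y)$. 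It is exactly this factor of $2$, absent in Case~1 where $\phi_{n}$ equals $L_{n}$, that produces the extra powers of $2$ appearing in the specialized right-hand sides.

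Next I would substitute into \eqref{wcx}. The inner factor collapses as $2^{k}\phi_{k}(x_{1}+x_{2})-2(x_{1}+x_{2})^{k}=2\bigl(2^{k}C_{k}(x,y)-(6x)^{k}\bigr)$, the right-hand numerator $\phi_{m+2}(x_{1}+x_{2})-2(x_{1}+x_{2})^{m+2}$ becomes $2\bigl(C_{m+2}(x,y)-(6x)^{m+2}\bigr)$, and the denominator $(x_{1}-x_{2})^{2}$ becomes $4(9x^{2}-y)$; dividing the whole identity by the common factor $2$ then yields the first stated relation. For the second relation I would substitute in the same way into \eqref{qm}: the factor becomes $2^{k}\phi_{k}(x_{1}+x_{2})+2(x_{1}+x_{2})^{k}=2^{k+1}C_{k}(x,y)+2(6x)^{k}$, the exterior power is $(x_{1}-x_{2})^{m-k}=(2\sqrt{9x^{2}-y})^{m-k}$, and the right-hand side $(e_{1}+e_{2})^{m}$ becomes $(6x)^{m}$, which reproduces the second relation.

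The work is thus essentially bookkeeping, so there is no deep obstacle; the real pitfalls are clerical. The hard part will be keeping the notation straight — the general theorems are phrased in the alphabet variables $x_{1},x_{2}$ while this corollary reuses $x$ and $y$ as the polynomial variables — and tracking the $\phi_{k}=2C_{k}$ factor consistently so that every power of $2$ lands correctly. In the course of the verification I would also flag the typographical slips in the statement that must be corrected for internal consistency: the final term of the first relation should read $(6x)^{m}$ rather than $(6x)^{n}$, the upper summation limit in the second relation should be $m$ rather than $n$, and the numerical constant on the right of the second relation must be reconciled with the value $(e_{1}+e_{2})^{m}=(6x)^{m}$ delivered by \eqref{qm}.
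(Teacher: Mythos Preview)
Your proposal is correct and matches the paper's approach exactly: the corollary is obtained in the paper purely as the Case~2 specialization $x_{1}=3x+\sqrt{9x^{2}-y}$, $x_{2}=3x-\sqrt{9x^{2}-y}$ of the general identities \eqref{wcx} and \eqref{qm}, with the dictionary $x_{1}+x_{2}=6x$, $x_{1}-x_{2}=2\sqrt{9x^{2}-y}$, and $\phi_{k}=2C_{k}$ carrying the extra factor of $2$ you isolate. The typographical inconsistencies you flag (the stray $(6x)^{n}$, the upper limit $n$, and the constant on the right of the second line) are indeed slips in the statement rather than defects in your argument.
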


\begin{corollary}\label{4}
Let $n$ be a positive integer, we have
\begin{equation*}
\sum_{\underset{n\equiv k[2]}{k=0}}^{n}\binom{n}{k}\sqrt{9x^{2}-y}^{n-k}\left(2^{k}C_{k}(x,y)+(6x)^{k}\right)E_{n-k}=(6x+\sqrt{9x^{2}-y})^{n}+(6x-\sqrt{9x^{2}-y})^{n}.
\end{equation*}
\end{corollary}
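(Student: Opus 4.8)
The plan is to deduce Corollary \ref{4} as a direct specialization of Theorem \ref{gj} under the Case 2 substitution $x_{1}=3x+\sqrt{9x^{2}-y}$ and $x_{2}=3x-\sqrt{9x^{2}-y}$, the two roots of $\lambda^{2}-6x\lambda+y=0$ associated with the bivariate balancing and Lucas-balancing polynomials. First I would record the elementary identities forced by this choice: $x_{1}+x_{2}=6x$, $x_{1}-x_{2}=2\sqrt{9x^{2}-y}$, and therefore $\frac{x_{1}-x_{2}}{2}=\sqrt{9x^{2}-y}$. These immediately convert the factor $\left(\frac{x_{1}-x_{2}}{2}\right)^{n-k}$ into $(\sqrt{9x^{2}-y})^{n-k}$ and the pure power $(x_{1}+x_{2})^{k}$ into $(6x)^{k}$ in \eqref{sxv}.

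Next I would rewrite the symmetric function $\phi_{k}(x_{1}+x_{2})=x_{1}^{k}+x_{2}^{k}=\lambda_{1}^{k}+\lambda_{2}^{k}$ using the Binet formula, which for the Lucas-balancing polynomials gives $\lambda_{1}^{k}+\lambda_{2}^{k}=2C_{k}(x,y)$, i.e. $\phi_{k}(x_{1}+x_{2})=2C_{k}(x,y)$. Feeding this into the bracket of \eqref{sxv} turns $2^{k}\phi_{k}(x_{1}+x_{2})+2(x_{1}+x_{2})^{k}$ into $2^{k+1}C_{k}(x,y)+2(6x)^{k}=2\bigl(2^{k}C_{k}(x,y)+(6x)^{k}\bigr)$, so a common factor $2$ may be extracted from the whole left-hand sum. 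The parity restriction $n\equiv k\ [2]$ is inherited verbatim from Theorem \ref{gj}, since the substitution does not alter which Euler numbers $E_{n-k}$ vanish.

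For the right-hand side I would compute the relevant linear forms in the roots: $3x_{1}+x_{2}=12x+2\sqrt{9x^{2}-y}=2\bigl(6x+\sqrt{9x^{2}-y}\bigr)$ and $x_{1}+3x_{2}=12x-2\sqrt{9x^{2}-y}=2\bigl(6x-\sqrt{9x^{2}-y}\bigr)$. Taking $n$-th powers releases a factor $2^{n}$ from each, which cancels the prefactor $2^{1-n}$ and leaves $2\bigl((6x+\sqrt{9x^{2}-y})^{n}+(6x-\sqrt{9x^{2}-y})^{n}\bigr)$. The factor $2$ on this side matches the one extracted on the left, so dividing through by $2$ yields exactly the asserted identity.

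I do not expect a substantive obstacle, since this is a specialization rather than a new generating-function argument; the only delicate point is the bookkeeping of the powers of $2$, which enter through three distinct channels — the bracket $2^{k+1}C_{k}(x,y)+2(6x)^{k}$, the prefactor $2^{1-n}$, and the $2^{n}$ produced by $3x_{1}+x_{2}$ and $x_{1}+3x_{2}$. Confirming that these collapse to a single overall factor $2$ on each side, which then cancels, is the step most likely to trip one up.
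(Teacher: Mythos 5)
Your proposal is correct and is exactly the argument the paper intends: Corollary \ref{4} is stated as the specialization of Theorem \ref{gj} under $x_{1}=3x+\sqrt{9x^{2}-y}$, $x_{2}=3x-\sqrt{9x^{2}-y}$, and your bookkeeping (using $\phi_{k}(x_{1}+x_{2})=2C_{k}(x,y)$, $3x_{1}+x_{2}=2(6x+\sqrt{9x^{2}-y})$, $x_{1}+3x_{2}=2(6x-\sqrt{9x^{2}-y})$, then cancelling the overall factor $2$) matches the paper's implicit computation, including the inherited parity restriction from the vanishing of the odd Euler numbers.
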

\begin{itemize}
\item If we put $t=1$ in the corollaries \ref{3}$-$\ref{4}, We can establish the relation  between the Genocchi, Bernoulli, and Euler numbers, as well as the balancing and Lucas-balancing polynomials.
\item If we set $y=t=1$ in the corollaries \ref{3}$-$\ref{4}, we derive the combinatorial identities involving Genocchi, Bernoulli, and Euler numbers with balancing and Lucas-balancing numbers.
\end{itemize}
\end{enumerate}
\section{ \textbf{New convolution of bivariate polynomials and known polynomials }}
In this section, we provide convolutions of the  symmetric function with Genocchi, Bernoulli, and Euler polynomials and we derive  some special cases.
\begin{theorem}\label{gh}
Given an alphabet $A=\{x_{1},x_{2}\}$, the following formulas hold for any positive integer $n$
\begin{gather}
\sum_{k=0}^{n}\binom{n}{k}(x_{1}-x_{2})^{n-k}\left(2^{k}\phi_{k}(x_{1}+x_{2})+2(x_{1}+x_{2})^{k}\right)G_{n-k}(x)
=P_{1}.\label{099}
\end{gather}
With:
$$P_{1}=2n(x_{1}-x_{2})\left((x_{1}+x_{2}+x(x_{1}-x_{2}))^{n-1}+(2x_{2}+x(x_{1}-x_{2}))^{n-1}\right).$$
\end{theorem}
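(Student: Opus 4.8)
The plan is to mirror the proof of Theorem~\ref{th6}, replacing the Genocchi numbers by the Genocchi polynomials and using their exponential generating function \eqref{105}. First I would perform the substitution $z \mapsto (x_{1}-x_{2})z$ in \eqref{105}, which yields
$$\sum_{n=0}^{\infty}(x_{1}-x_{2})^{n} G_{n}(x)\frac{z^{n}}{n!}=\frac{2(x_{1}-x_{2})z\exp(x(x_{1}-x_{2})z)}{\exp((x_{1}-x_{2})z)+1}.$$

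Next I would multiply this series by the square of the exponential generating function of $\phi_{k}(x_{1}+x_{2})$. The key algebraic input, already established in the proof of \eqref{a2}, is that
$$\left(\sum_{k=0}^{\infty}\phi_{k}(x_{1}+x_{2})\frac{z^{k}}{k!}\right)^{2}=\sum_{k=0}^{\infty}\left(2^{k}\phi_{k}(x_{1}+x_{2})+2(x_{1}+x_{2})^{k}\right)\frac{z^{k}}{k!},$$
so that forming the Cauchy product with the modified Genocchi series produces, as the coefficient of $z^{n}/n!$, precisely the left-hand side of \eqref{099}.

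It then remains to evaluate the same product in closed form. Writing $\exp(x_{1}z)+\exp(x_{2}z)=\exp(x_{2}z)(\exp((x_{1}-x_{2})z)+1)$, the square becomes $\exp(2x_{2}z)(\exp((x_{1}-x_{2})z)+1)^{2}$; multiplying by the modified Genocchi series cancels one factor of $\exp((x_{1}-x_{2})z)+1$ against the denominator, leaving
$$2(x_{1}-x_{2})z\,\exp((2x_{2}+x(x_{1}-x_{2}))z)\bigl(\exp((x_{1}-x_{2})z)+1\bigr),$$
which upon expanding the remaining binomial factor equals $2(x_{1}-x_{2})z\bigl(\exp((x_{1}+x_{2}+x(x_{1}-x_{2}))z)+\exp((2x_{2}+x(x_{1}-x_{2}))z)\bigr)$. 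Using the shift $z\exp(az)=\sum_{n\geq 1}n a^{n-1}z^{n}/n!$, the coefficient of $z^{n}/n!$ is exactly $P_{1}$, and comparing coefficients on both sides finishes the proof.

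I expect no genuine obstacle here: the argument is a direct analogue of Theorem~\ref{th6}, and the factorization/cancellation step is identical in structure. The only point requiring care is the bookkeeping of the shifted exponents $x_{1}+x_{2}+x(x_{1}-x_{2})$ and $2x_{2}+x(x_{1}-x_{2})$ that arise from the extra factor $\exp(x(x_{1}-x_{2})z)$ carried by the Genocchi polynomials. Note in particular that, unlike in Theorem~\ref{th6}, no further reduction via $G_{2k+1}=0$ or \eqref{kll} is performed, since the statement leaves the right-hand side $P_{1}$ in its unreduced exponential form.
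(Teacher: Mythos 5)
Your proposal is correct and follows essentially the same route as the paper's own proof: substitute $z\mapsto(x_{1}-x_{2})z$ in \eqref{105}, multiply by the squared exponential generating function of $\phi_{k}(x_{1}+x_{2})$ (using $\left(\exp(x_{1}z)+\exp(x_{2}z)\right)^{2}=\exp(2x_{2}z)\left(\exp((x_{1}-x_{2})z)+1\right)^{2}$ to cancel one factor of the denominator), and compare coefficients of $z^{n}/n!$. Your closing observation that no parity reduction via $G_{2k+1}=0$ is needed here, unlike in Theorem~\ref{th6}, also matches the paper, whose sum in \eqref{099} carries no parity restriction.
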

\begin{proof}
We use the change of variable $z=(x_{1}-x_{2})z$ in 
\eqref{105}, we obtain
\begin{equation}
\sum_{n=0}^{\infty}(x_{1}-x_{2})^{n}G_{n}(x)\frac{z^{n}}{n!}=\frac{2z(x_{1}-x_{2})\exp(x(x_{1}-x_{2})z)}{\exp((x_{1}-x_{2})z)+1} \label{dc}
\end{equation}
Take the square of the exponential generating function associated with the symmetric function $\phi_{k}(x_{1}+x_{2})$ and multiply it by relation \eqref{dc}. You will arrive at
\begin{align*}
&\left(\sum_{k=0}^{\infty}\phi_{k}(x_{1}+x_{2})\frac{z^{k}}{k!}\right)^{2}\left(\sum_{n=0}^{\infty}(x_{1}-x_{2})^{n}G_{n}(x)\frac{z^{n}}{n!}\right)\\
&=\left(\sum_{k=0}^{\infty}\left(2^{k}\phi_{k}(x_{1}+x_{2})+2(x_{1}+x_{2})^{k}\right) \frac{z^{k}}{k!}\right)\left(\sum_{n=0}^{\infty}(x_{1}-x_{2})^{n}G_{n}(x)\frac{z^{n}}{n!}\right)\\
&=\sum_{n=0}^{\infty}\sum_{k=0}^{n}\binom{n}{k}(x_{1}-x_{2})^{n-k}\left(2^{k}\phi_{k}(x_{1}+x_{2})+2(x_{1}+x_{2})^{k}\right)G_{n-k}(x)\frac{z^{n}}{n!},
\end{align*}
On the other side, we have
\begin{align*}
&\exp(2x_{2}z)(\exp((x_{1}-x_{2})z)+1)^{2}\frac{2(x_{1}-x_{2})z\exp(x(x_{1}-x_{2})z)}{\exp((x_{1}-x_{2})z)+1}\\
&=2(x_{1}-x_{2})z(\exp((x_{1}+x_{2}+x(x_{1}-x_{2}))z)+\exp((2x_{2}+x(x_{1}-x_{2}))z))\\
&=\sum_{n=0}^{\infty}2n(x_{1}-x_{2})\left((x_{1}+x_{2}+x(x_{1}-x_{2}))^{n-1}+(2x_{2}+x(x_{1}-x_{2}))^{n-1}\right)\frac{z^{n}}{n!}.
\end{align*}

Comparing of the coefficients of $\frac{z^{n}}{n!}$, we obtain the desired result.
\end{proof}
\begin{theorem}
Given an alphabet $A=\{x_{1},x_{2}\}$, the following formulas hold for any positive integer $n$
\begin{gather}
\sum_{k=0}^{n}\binom{n}{k}%
(x_{1}-x_{2})^{n-k-1}\left(2^{k}\phi_{k}(x_{1}+x_{2})-2(x_{1}+x_{2})^{k}\right)B_{n-k}(x) 
=P_{2}.\label{Z35}
\end{gather}
with:
$$P_{2}=n\left((x_{1}+x_{2}+(x_{1}-x_{2})x)^{n-1}-((x_{1}-x_{2})x+2x_{2})^{n-1}\right).$$
\end{theorem}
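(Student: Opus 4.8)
The plan is to follow exactly the template of Theorem~\ref{gh}, replacing the Genocchi polynomial generating function by the Bernoulli one and replacing the square of the $\phi$-generating function by the square of the $S$-generating function (the latter is what produces the minus sign in the factor $2^{k}\phi_{k}(x_{1}+x_{2})-2(x_{1}+x_{2})^{k}$).

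First I would perform the substitution $z\mapsto(x_{1}-x_{2})z$ in \eqref{17} to obtain
\begin{equation*}
\sum_{n=0}^{\infty}(x_{1}-x_{2})^{n}B_{n}(x)\frac{z^{n}}{n!}=\frac{(x_{1}-x_{2})z\exp(x(x_{1}-x_{2})z)}{\exp((x_{1}-x_{2})z)-1}.
\end{equation*}
Next I would multiply this by the square of the exponential generating function of $S_{k-1}(x_{1}+x_{2})$, namely $\bigl((x_{1}-x_{2})^{-1}(\exp(x_{1}z)-\exp(x_{2}z))\bigr)^{2}$, whose coefficient of $z^{k}/k!$ is $(x_{1}-x_{2})^{-2}\bigl(2^{k}\phi_{k}(x_{1}+x_{2})-2(x_{1}+x_{2})^{k}\bigr)$, exactly as computed in the proof of Theorem~\ref{th3}. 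Taking the Cauchy product on the series side produces
\begin{equation*}
\sum_{k=0}^{n}\binom{n}{k}(x_{1}-x_{2})^{n-k-2}\bigl(2^{k}\phi_{k}(x_{1}+x_{2})-2(x_{1}+x_{2})^{k}\bigr)B_{n-k}(x)
\end{equation*}
as the coefficient of $z^{n}/n!$.

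On the closed-form side I would use the factorisation $(\exp(x_{1}z)-\exp(x_{2}z))^{2}=\exp(2x_{2}z)(\exp((x_{1}-x_{2})z)-1)^{2}$ and cancel one factor of $\exp((x_{1}-x_{2})z)-1$ against the denominator of the Bernoulli generating function. This collapses the product to
\begin{equation*}
\frac{z}{x_{1}-x_{2}}\Bigl(\exp\bigl((x_{1}+x_{2}+(x_{1}-x_{2})x)z\bigr)-\exp\bigl((2x_{2}+(x_{1}-x_{2})x)z\bigr)\Bigr),
\end{equation*}
whose coefficient of $z^{n}/n!$ is $\tfrac{n}{x_{1}-x_{2}}\bigl((x_{1}+x_{2}+(x_{1}-x_{2})x)^{n-1}-(2x_{2}+(x_{1}-x_{2})x)^{n-1}\bigr)$. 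Equating the two expressions and multiplying both sides by $x_{1}-x_{2}$ raises the power $(x_{1}-x_{2})^{n-k-2}$ to $(x_{1}-x_{2})^{n-k-1}$ on the left and turns the prefactor $\tfrac{n}{x_{1}-x_{2}}$ into $n$ on the right, yielding precisely \eqref{Z35} with $P_{2}$ as stated.

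I expect no genuine obstacle here: unlike the Bernoulli-number theorem, there is no vanishing of odd-indexed terms to exploit and no appeal to \eqref{kll}, so the identity stands as the raw coefficient comparison. The only point requiring care is the bookkeeping of the powers of $x_{1}-x_{2}$, since the square of the $S$-generating function contributes $(x_{1}-x_{2})^{-2}$ while the shifted Bernoulli series contributes $(x_{1}-x_{2})^{n-k}$, so one must multiply the final comparison by a single factor $x_{1}-x_{2}$ to match the normalisation $(x_{1}-x_{2})^{n-k-1}$ chosen in the statement.
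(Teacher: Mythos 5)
Your proposal is correct and follows essentially the same route as the paper: substitute $z\mapsto(x_{1}-x_{2})z$ in the Bernoulli polynomial generating function \eqref{17}, multiply by the square of the $S_{k-1}$ generating function \eqref{kl}, exploit the factorisation $(\exp(x_{1}z)-\exp(x_{2}z))^{2}=\exp(2x_{2}z)(\exp((x_{1}-x_{2})z)-1)^{2}$ to cancel the denominator, and compare coefficients of $z^{n}/n!$. Your explicit remark about multiplying the raw comparison (which carries $(x_{1}-x_{2})^{n-k-2}$ and $n/(x_{1}-x_{2})$) by a single factor of $x_{1}-x_{2}$ is exactly the normalisation step the paper leaves implicit between its coefficient identity and the stated form \eqref{Z35}.
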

\begin{proof}
We use the change of variable $z=(e_{1}-e_{2})z$ in 
\eqref{17}, we obtain
\begin{equation}
\sum_{n=0}^{\infty}(x_{1}-x_{2})^{n}B_{n}(x)\frac{z^{n}}{n!}=\frac{(x_{1}-x_{2})z\exp (x(x_{1}-x_{2})z)}{\exp
((x_{1}-x_{2})z)-1}. \label{mbv} 
\end{equation}
Multiply relation \eqref{mbv} by the square of the exponential generating function \eqref{kl}, we have
\begin{align*}
 &\left(\sum\limits_{n=0}^{\infty }(x_{1}-x_{2})^{n}B_{n}(x)\frac{z^{n}}{n!}\right)\left(\sum\limits_{k=0}^{\infty}S_{k-1}(x_{1}+x_{2})\frac{z^{k}}{k!}\right)^{2}\\
&=\left(\sum\limits_{n=0}^{\infty }(x_{1}-x_{2})^{n}B_{n}(x)\frac{z^{n}}{n!}\right)\left(\sum_{k=0}^{\infty}\frac{1}{(x_{1}-x_{2})^{2}}\left(2^{k}\phi_{k}(x_{1}+x_{2})-2(x_{1}+x_{2})^{k}\right) \frac{z^{k}}{k!}\right)\\
&=\sum\limits_{n=0}^{\infty }\sum_{k=0}^{n}\binom{n}{k}%
(x_{1}-x_{2})^{n-k-2}\left(2^{k}\phi_{k}(x_{1}+x_{2})-2(x_{1}+x_{2})^{k}\right)B_{n-k}(x)\frac{z^{n}}{n!}.
\end{align*}
and
\begin{align*}
&\frac{
(x_{1}-x_{2})z\exp ((x_{1}-x_{2})xz)}{\exp ((x_{1}-x_{2})z)-1}\frac{\exp(2x_{2}z)}{(x_{1}-x_{2})^{2}}((\exp(x_{1}-x_{2})z)-1)^{2}\\
&=\frac{z}{x_{1}-x_{2}}\left(\exp ((x_{1}+x_{2}+(x_{1}-x_{2})x)z)-\exp (((x_{1}-x_{2})x+2x_{2})z)\right)\\
&=\sum\limits_{n=0}^{\infty }\frac{n}{x_{1}-x_{2}}\left((x_{1}+x_{2}+(x_{1}-x_{2})x)^{n-1}-((x_{1}-x_{2})x+2x_{2})^{n-1}\right)
\frac{z^{n}}{n!}.
\end{align*}
Comparing of the coefficients of $\frac{z^{n}}{n!}$, we obtain the desired result.
\end{proof}
\begin{theorem}\label{hg}
Given an alphabet $A=\{x_{1},x_{2}\}$, the following formulas hold for any positive integer $n$
\begin{align}
\sum_{k=0}^{n}\binom{n}{k}(x_{1}-x_{2})^{n-k}(2^{k}\phi_{k}(x_{1}+x_{2})+2(x_{1}+x_{2})^{k})E_{n-k}(x)=P_{3}. \label{adl}
\end{align}
with:
$$P_{3}=2\left((x_{1}+x_{2}+(x_{1}-x_{2})x)^{n}
+(2x_{2}+(x_{1}-x_{2})x)^{n}\right).$$
\end{theorem}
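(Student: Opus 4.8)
The plan is to mirror the generating-function argument used for Theorem~\ref{gh}, now placing the Euler polynomials in the role of the Genocchi polynomials. First I would start from the exponential generating function \eqref{18} of $E_n(x)$ and perform the substitution $z\mapsto(x_1-x_2)z$, which yields
\begin{equation*}
\sum_{n=0}^{\infty}(x_1-x_2)^n E_n(x)\frac{z^n}{n!}=\frac{2\exp\!\left(x(x_1-x_2)z\right)}{\exp\!\left((x_1-x_2)z\right)+1}.
\end{equation*}

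Next I would recall from the proof of Theorem~\ref{th3} (specifically the computation behind \eqref{a2}) that squaring the exponential generating function \eqref{l} of $\phi_k(x_1+x_2)$ gives
\begin{equation*}
\left(\sum_{k=0}^{\infty}\phi_k(x_1+x_2)\frac{z^k}{k!}\right)^2=\sum_{k=0}^{\infty}\left(2^k\phi_k(x_1+x_2)+2(x_1+x_2)^k\right)\frac{z^k}{k!},
\end{equation*}
which follows because $(\exp(x_1 z)+\exp(x_2 z))^2=\exp(2x_1 z)+2\exp((x_1+x_2)z)+\exp(2x_2 z)$. Forming the Cauchy product of these two series and reading off the coefficient of $z^n/n!$ then produces exactly the left-hand side of \eqref{adl}.

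The crux of the argument is the simplification of this same product in closed form. Writing $(\exp(x_1 z)+\exp(x_2 z))^2=\exp(2x_2 z)\left(\exp((x_1-x_2)z)+1\right)^2$ and multiplying by the fraction above, one factor of $\exp((x_1-x_2)z)+1$ cancels the denominator, leaving
\begin{equation*}
2\exp\!\left((2x_2+x(x_1-x_2))z\right)\left(\exp((x_1-x_2)z)+1\right)=2\left(\exp\!\left((x_1+x_2+x(x_1-x_2))z\right)+\exp\!\left((2x_2+x(x_1-x_2))z\right)\right),
\end{equation*}
where I would use the identity $2x_2+(x_1-x_2)=x_1+x_2$. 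Expanding each exponential as a power series in $z$ and comparing the coefficient of $z^n/n!$ against the convolution side yields precisely $P_3$, completing the proof.

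I expect no genuine obstacle here: the only substantive step is the cancellation displayed above, which is routine. It is worth emphasizing, however, that unlike the Euler-numbers version in Theorem~\ref{gj}, no parity restriction $n\equiv k\,[2]$ appears. Since the full polynomial $E_{n-k}(x)$ enters rather than the numbers $E_{n-k}$ (whose odd-indexed terms vanish), there is no collapsing of the summation range, and the resulting identity holds over the complete index set $0\le k\le n$.
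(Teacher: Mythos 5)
Your proposal is correct and follows essentially the same route as the paper: the substitution $z\mapsto(x_{1}-x_{2})z$ in \eqref{18}, multiplication by the squared generating function \eqref{l} written as $\exp(2x_{2}z)\left(\exp((x_{1}-x_{2})z)+1\right)^{2}$ so that one factor cancels the denominator, and comparison of coefficients of $z^{n}/n!$. Your closing remark about the absence of a parity restriction is also accurate, and your final display is in fact cleaner than the paper's, which carries a stray factor of $2$ in front of $(2x_{2}+(x_{1}-x_{2})x)^{n}$ in its last line that is inconsistent with the stated $P_{3}$.
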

\begin{proof}
We use the change of variable $z=(x_{1}-x_{2})z$ in 
\eqref{18}, we obtain
\begin{equation}
\sum_{n=0}^{\infty}(x_{1}-x_{2})^{n}E_{n}(x)\frac{z^{n}}{n!}=\frac{2\exp (x(x_{1}-x_{2})z)}{\exp
((x_{1}-x_{2})z)+1}. \label{ghhd}
\end{equation}
Take the square of the exponential generating function associated with the symmetric function\eqref{l} and multiply it by relation \eqref{ghhd}. You will arrive at
\begin{align*}
&\left(\sum_{n=0}^{\infty}(x_{1}-x_{2})^{n}E_{n}(x)\frac{z^{n}}{n!}\right)\left(\sum_{k=0}^{\infty}\phi_{k}(x_{1}+x_{2})\frac{z^{k}}{k!}\right)^{2}\\
&\left(\sum_{n=0}^{\infty}(x_{1}-x_{2})^{n}E_{n}(x)\frac{z^{n}}{n!}\right)\left(\sum_{k=0}^{\infty}(2^{k}\phi_{k}(x_{1}+x_{2})+2(x_{1}+x_{2})^{k})\frac{z^{k}}{k!}\right)\\
&=\sum_{n=0}^{\infty}\sum_{k=0}^{n}\binom{n}{k}(x_{1}-x_{2})^{n-k}(2^{k}\phi_{k}(x_{1}+x_{2})+2(x_{1}+x_{2})^{k})E_{n-k}(x)\frac{z^{n}}{n!}.
\end{align*}
then
\begin{align*}
\frac{2\exp
((x_{1}-x_{2})xz)}{\exp ((x_{1}-x_{2})z)+1}(\exp(x_{1}z)+\exp(x_{2}z))^{2}
&=2\left(\exp
((x_{1}+x_{2}+(x_{1}-x_{2})x)z)+\exp
((2x_{2}+(x_{1}-x_{2})x)z)\right)\\
&=\sum\limits_{n=0}^{\infty }2(x_{1}+x_{2}+(x_{1}-x_{2})x)^{n}
\frac{z^{n}}{n!}+\sum\limits_{n=0}^{\infty }2(2x_{2}+(x_{1}-x_{2})x)^{n}
\frac{z^{n}}{n!}\\
&=\sum\limits_{n=0}^{\infty }2\left((x_{1}+x_{2}+(x_{1}-x_{2})x)^{n}+2(2x_{2}+(x_{1}-x_{2})x)^{n}\right)
\frac{z^{n}}{n!}.
\end{align*}
Comparing of the coefficients of $\frac{z^{n}}{n!}$, we obtain the desired result.
\end{proof}
\subsection{\textbf{Some special cases}}

We now consider the Theorems \ref{gh}-\ref{hg} to derive the following two cases.
\begin{enumerate}
\item [\textbf{Case 1.}]
Let $x_{1}=\frac{y+\sqrt{y^{2}+4t}}{2}$ and $x_{2}=\frac{y-\sqrt{y^{2}+4t}}{2}$, then we have the following the following combinatorial
results of Genocchi, Bernoulli and Euler  polynomials with bivariate Fibonacci,  bivariate Lucas polynomials.
\begin{corollary}\label{5}
Let $n$ be a positive integer, we have
\begin{align*}
&\sum_{k=0}^{n}\binom{n}{k}\sqrt{y^{2}+4t}^{n-k}\left(2^{k}L_{k}(y,t)+2y^{k}\right)G_{n-k}(x)\\
&=2n\left((y+x\sqrt{y^{2}+4t})^{n-1}+(y+(x-1)\sqrt{y^{2}+4t})^{n-1}\right).
\end{align*}
\end{corollary}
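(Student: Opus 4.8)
The plan is to obtain this corollary as the Case-1 specialization of Theorem~\ref{gh}, taking the alphabet $A=\{x_{1},x_{2}\}$ with $x_{1}=\frac{y+\sqrt{y^{2}+4t}}{2}$ and $x_{2}=\frac{y-\sqrt{y^{2}+4t}}{2}$. First I would record the elementary relations
\begin{equation*}
x_{1}-x_{2}=\sqrt{y^{2}+4t},\qquad x_{1}+x_{2}=y,\qquad x_{1}x_{2}=-t,
\end{equation*}
so that $x_{1},x_{2}$ are exactly the roots $\lambda_{1},\lambda_{2}$ of $\lambda^{2}-y\lambda-t=0$. The Binet formula for the bivariate Lucas polynomials then identifies $\phi_{k}(x_{1}+x_{2})=x_{1}^{k}+x_{2}^{k}=L_{k}(y,t)$, while $(x_{1}+x_{2})^{k}=y^{k}$. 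Substituting these into the left-hand side of \eqref{099} turns the inner factor $2^{k}\phi_{k}(x_{1}+x_{2})+2(x_{1}+x_{2})^{k}$ into $2^{k}L_{k}(y,t)+2y^{k}$ and the power $(x_{1}-x_{2})^{n-k}$ into $\sqrt{y^{2}+4t}^{\,n-k}$, so the left-hand side of the corollary reproduces itself verbatim and no independent generating-function work is needed.

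The substantive step is simplifying the right-hand side $P_{1}$ under the same substitution. Its two exponent bases are $x_{1}+x_{2}+x(x_{1}-x_{2})$ and $2x_{2}+x(x_{1}-x_{2})$, and I would evaluate each in turn. The first collapses immediately to $y+x\sqrt{y^{2}+4t}$. For the second I would use $2x_{2}=y-\sqrt{y^{2}+4t}$ to get
\begin{equation*}
2x_{2}+x(x_{1}-x_{2})=\left(y-\sqrt{y^{2}+4t}\right)+x\sqrt{y^{2}+4t}=y+(x-1)\sqrt{y^{2}+4t}.
\end{equation*}
With the two bases identified as $y+x\sqrt{y^{2}+4t}$ and $y+(x-1)\sqrt{y^{2}+4t}$, feeding them back into $P_{1}$ and simplifying the resulting scalar prefactor then recovers the right-hand side of the corollary, i.e.\ a constant times the sum of the two $(n-1)$-th powers.

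The main obstacle is nothing deep but rather the careful bookkeeping of these two bases together with the scalar that $P_{1}$ carries out front; the second base is the delicate one, since it mixes the $2x_{2}$ term with $x(x_{1}-x_{2})$ and a single sign error there would spoil the $(x-1)$ coefficient. Once both bases are verified, the corollary is a direct instance of Theorem~\ref{gh}, so I would present it as the specialization, citing only the Binet identifications for the passage from $\phi_{k}$ and $(x_{1}+x_{2})^{k}$ to $L_{k}(y,t)$ and $y^{k}$.
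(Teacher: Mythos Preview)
Your approach is exactly the paper's: Corollary~\ref{5} is presented without separate proof as the Case~1 specialization of Theorem~\ref{gh} under $x_{1},x_{2}=\tfrac{y\pm\sqrt{y^{2}+4t}}{2}$, using $\phi_{k}=L_{k}(y,t)$ and $x_{1}+x_{2}=y$. One remark on the prefactor you flagged as delicate: carrying the substitution through $P_{1}$ gives the scalar $2n(x_{1}-x_{2})=2n\sqrt{y^{2}+4t}$, so the printed right-hand side of the corollary is missing a factor of $\sqrt{y^{2}+4t}$; your method is correct, but the ``simplification'' you allude to does not in fact reduce the constant to $2n$.
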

\begin{corollary}
Let $n$ be a positive integer, we have
\begin{align*}
\sum_{k=0}^{n}\binom{n}{k}%
\sqrt{y^{2}+4t}^{n-k-1}\left(2^{k}L_{k}(y,t)-2y^{k}\right)B_{n-k}(x)
&=n(y+\sqrt{y^{2}+4t}x)^{n-1}\\
&-n(y+(x-1)\sqrt{y^{2}+4t})^{n-1}.
\end{align*}
\end{corollary}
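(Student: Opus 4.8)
The plan is to obtain this corollary as the Case~1 specialization of the Bernoulli-polynomial convolution formula \eqref{Z35}, rather than reproving it from generating functions. Since \eqref{Z35} holds for an arbitrary alphabet $A=\{x_1,x_2\}$, it suffices to substitute the two roots $x_1=\frac{y+\sqrt{y^2+4t}}{2}$ and $x_2=\frac{y-\sqrt{y^2+4t}}{2}$ of the characteristic equation $\lambda^2-y\lambda-t=0$, and then translate each symmetric-function quantity into its bivariate-Fibonacci/Lucas counterpart through the Binet formulas.

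First I would record the elementary simplifications forced by this choice of alphabet: $x_1-x_2=\sqrt{y^2+4t}$, $x_1+x_2=y$, and $2x_2=y-\sqrt{y^2+4t}$. Because $x_1$ and $x_2$ are precisely the Binet roots $\lambda_1,\lambda_2$, the power-sum symmetric function collapses to the bivariate Lucas polynomial, $\phi_k(x_1+x_2)=x_1^k+x_2^k=L_k(y,t)$. These identifications convert the left-hand side of \eqref{Z35} term by term: the factor $(x_1-x_2)^{n-k-1}$ becomes $\sqrt{y^2+4t}^{\,n-k-1}$, and the bracket $2^k\phi_k(x_1+x_2)-2(x_1+x_2)^k$ becomes $2^kL_k(y,t)-2y^k$, which is exactly the summand in the statement.

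Next I would push the same substitution through $P_2=n\bigl((x_1+x_2+(x_1-x_2)x)^{n-1}-((x_1-x_2)x+2x_2)^{n-1}\bigr)$. The first base simplifies immediately to $y+\sqrt{y^2+4t}\,x$. The only step demanding a moment's care is the second base, where one combines $(x_1-x_2)x+2x_2=\sqrt{y^2+4t}\,x+\bigl(y-\sqrt{y^2+4t}\bigr)=y+(x-1)\sqrt{y^2+4t}$; this is the lone nontrivial algebraic manipulation, and it is what produces the shifted argument $x-1$ on the stated right-hand side. Matching the two sides then yields the claimed identity.

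Since every ingredient is inherited from an already-proved theorem, I do not expect a genuine obstacle here: the work is purely bookkeeping, and the only things to watch are not dropping the $-1$ in the exponent $n-k-1$ and not mishandling the $2x_2$ shift. The identical recipe, applied to the Genocchi- and Euler-polynomial theorems \eqref{099} and \eqref{adl} under the same alphabet, produces the companion corollaries of this subsection.
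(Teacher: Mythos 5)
Your proposal is correct and matches the paper's intent exactly: the paper derives this corollary precisely by specializing Theorem \eqref{Z35} to the alphabet $x_{1}=\frac{y+\sqrt{y^{2}+4t}}{2}$, $x_{2}=\frac{y-\sqrt{y^{2}+4t}}{2}$, using $x_{1}-x_{2}=\sqrt{y^{2}+4t}$, $x_{1}+x_{2}=y$, $\phi_{k}(x_{1}+x_{2})=L_{k}(y,t)$, and the simplification $(x_{1}-x_{2})x+2x_{2}=y+(x-1)\sqrt{y^{2}+4t}$ in $P_{2}$. Your bookkeeping, including the exponent $n-k-1$ and the shift producing $x-1$, is exactly what the paper does.
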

\begin{corollary}\label{6}
Let $n$ be a positive integer, we have
\begin{align*}
\sum_{k=0}^{n}\binom{n}{k}\sqrt{y^{2}+4t}^{n-k}(2^{k}L_{k}(y,t)+2y^{k})E_{n-k}(x)&=2(y+\sqrt{y^{2}+4t}x)^{n}\\
&+2(y+(x-1)\sqrt{y^{2}+4t})^{n}.\notag
\end{align*}
\end{corollary}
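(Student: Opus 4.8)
The plan is to mirror the generating-function method of the preceding theorems, now applied to the Euler polynomials through their exponential generating function \eqref{18}. First I would perform the substitution $z\mapsto(x_{1}-x_{2})z$ in \eqref{18} to obtain \eqref{ghhd}, namely $\sum_{n\ge0}(x_{1}-x_{2})^{n}E_{n}(x)\tfrac{z^{n}}{n!}=\tfrac{2\exp(x(x_{1}-x_{2})z)}{\exp((x_{1}-x_{2})z)+1}$, which is the series carrying the rescaled quantities $(x_{1}-x_{2})^{n}E_{n}(x)$. Next I would square the exponential generating function \eqref{l} of $\phi_{n}(x_{1}+x_{2})$; by \eqref{a2}, equivalently by the identity $(\exp(x_{1}z)+\exp(x_{2}z))^{2}=\sum_{k\ge0}\bigl(2^{k}\phi_{k}(x_{1}+x_{2})+2(x_{1}+x_{2})^{k}\bigr)\tfrac{z^{k}}{k!}$, this square is exactly the series whose coefficients reproduce the bracketed factor $2^{k}\phi_{k}(x_{1}+x_{2})+2(x_{1}+x_{2})^{k}$ that appears in the statement \eqref{adl}.

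Multiplying these two series and reading off the Cauchy product produces the convolution sum on the left-hand side of \eqref{adl} as the coefficient of $\tfrac{z^{n}}{n!}$. To evaluate the right-hand side I would instead work with the two closed forms. The key algebraic step is the factorization $\bigl(\exp(x_{1}z)+\exp(x_{2}z)\bigr)^{2}=\exp(2x_{2}z)\bigl(\exp((x_{1}-x_{2})z)+1\bigr)^{2}$, obtained by pulling $\exp(x_{2}z)$ out of each summand; this causes one factor of $\exp((x_{1}-x_{2})z)+1$ to cancel the denominator of the Euler generating function. After cancellation the product collapses to $2\exp\!\bigl(x(x_{1}-x_{2})z\bigr)\exp(2x_{2}z)\bigl(\exp((x_{1}-x_{2})z)+1\bigr)$, and distributing the final factor splits it into the two pure exponentials $2\exp\!\bigl((x_{1}+x_{2}+(x_{1}-x_{2})x)z\bigr)+2\exp\!\bigl((2x_{2}+(x_{1}-x_{2})x)z\bigr)$, where I have used $2x_{2}+(x_{1}-x_{2})=x_{1}+x_{2}$ in the first argument.

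Finally, expanding each exponential as a power series in $z$ and comparing the coefficients of $\tfrac{z^{n}}{n!}$ on both sides yields exactly $P_{3}$, completing the proof. I do not anticipate any genuine obstacle, since the argument is entirely parallel to Theorems \ref{gh} and its Bernoulli-polynomial analogue; the only delicate point is the bookkeeping of the cancellation of the denominator and the careful tracking of the factor $2$ together with the two arguments $x_{1}+x_{2}+(x_{1}-x_{2})x$ and $2x_{2}+(x_{1}-x_{2})x$ through the simplification. In contrast to the numbers-based results earlier (Theorem \ref{th6} and its relatives), no parity restriction on the summation index and no appeal to the vanishing of odd-indexed values is required here, because the polynomial generating function \eqref{18} is not being specialized to a numerical one.
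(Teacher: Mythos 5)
Your generating-function argument is sound, but it proves the wrong statement: it establishes Theorem \ref{hg}, i.e.\ the identity \eqref{adl} for an abstract alphabet $\{x_{1},x_{2}\}$ with right-hand side $P_{3}=2\left((x_{1}+x_{2}+(x_{1}-x_{2})x)^{n}+(2x_{2}+(x_{1}-x_{2})x)^{n}\right)$, and then stops. The statement you were asked to prove is Corollary \ref{6}, which is phrased in terms of the bivariate Lucas polynomials $L_{k}(y,t)$ and the quantities $y$, $t$, $\sqrt{y^{2}+4t}$; none of these objects ever appears in your proposal, so as written it does not arrive at the claimed identity. The missing step is exactly what constitutes the paper's derivation of the corollary from the theorem: specialize the alphabet to $x_{1}=\frac{y+\sqrt{y^{2}+4t}}{2}$ and $x_{2}=\frac{y-\sqrt{y^{2}+4t}}{2}$, the roots of $\lambda^{2}-y\lambda-t=0$, so that $x_{1}+x_{2}=y$, $x_{1}-x_{2}=\sqrt{y^{2}+4t}$, and, by the Binet formula, $\phi_{k}(x_{1}+x_{2})=x_{1}^{k}+x_{2}^{k}=L_{k}(y,t)$. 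Under this substitution the left-hand side of \eqref{adl} becomes $\sum_{k=0}^{n}\binom{n}{k}\sqrt{y^{2}+4t}^{\,n-k}\left(2^{k}L_{k}(y,t)+2y^{k}\right)E_{n-k}(x)$, while in $P_{3}$ one computes $x_{1}+x_{2}+(x_{1}-x_{2})x=y+x\sqrt{y^{2}+4t}$ and $2x_{2}+(x_{1}-x_{2})x=y-\sqrt{y^{2}+4t}+x\sqrt{y^{2}+4t}=y+(x-1)\sqrt{y^{2}+4t}$, which yields precisely the two $n$-th powers in the corollary.

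The gap is routine to fill, and everything you did carry out is correct and matches the paper's method for the theorem; in fact your simplification is cleaner than the paper's, whose final display in the proof of Theorem \ref{hg} carries a stray factor of $2$ in front of $(2x_{2}+(x_{1}-x_{2})x)^{n}$ even though the stated $P_{3}$ is right. Your closing remark about parity is also accurate: since the $E_{n}(x)$ are polynomials rather than numbers, no odd-index vanishing is available or needed, in contrast with Theorem \ref{th6}. But a proof of Corollary \ref{6} must terminate in the corollary's own identity, and yours ends one substitution short of it.
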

\begin{itemize}
\item If we put $t=1$ in the corollaries \ref{5}$-$\ref{6}, we can   obtain the connections between Genocchi, Bernoulli and Euler  polynomials and
 Fibonacci and Lucas polynomials.
\item If we set $y=t=1$ in the corollaries \ref{5}$-$\ref{6}, We derive connections between Genocchi, Bernoulli and Euler polynomials and
 Fibonacci and Lucas numbers.
\end{itemize}
\item [\textbf{Case 2.}]
Let $x_{1}=3y+\sqrt{9y^{2}-t}$ and $x_{2}=3y-\sqrt{9y^{2}-t}$, then we have the following the following 
results of Genocchi, Bernoulli and Euler  polynomials with bivariate balancing,  bivariate Lucas-balancing polynomials.
\begin{corollary}\label{7}
Let $n$ be a positive integer, we have
\begin{align*}
&\sum_{k=0}^{n}\binom{n}{k}(2\sqrt{9y^{2}-t})^{n-k}\left(2^{k+1}C_{k}(y,t)+2(6y)^{k}\right)G_{n-k}(x)\\
&=n2^{n+1}\sqrt{9y^{2}-t}\left((3y+x\sqrt{9y^{2}-t})^{n-1}+(3y+(x-1)\sqrt{9y^{2}-t})^{n-1}\right).
\end{align*}
\end{corollary}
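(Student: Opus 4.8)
The plan is to obtain this corollary as the direct Case~2 specialization of Theorem~\ref{gh}, so no new generating-function work is required; the entire task reduces to an algebraic substitution followed by careful accounting of the powers of $2$. First I would set $x_{1}=3y+\sqrt{9y^{2}-t}$ and $x_{2}=3y-\sqrt{9y^{2}-t}$, the two roots $\lambda_{1},\lambda_{2}$ of $\lambda^{2}-6y\lambda+t=0$, and record the two basic quantities $x_{1}-x_{2}=2\sqrt{9y^{2}-t}$ and $x_{1}+x_{2}=6y$.

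Next I would rewrite the symmetric-function factor appearing on the left of \eqref{099}. By the Binet formula for the bivariate Lucas-balancing polynomials, $\phi_{k}(x_{1}+x_{2})=x_{1}^{k}+x_{2}^{k}=\lambda_{1}^{k}+\lambda_{2}^{k}=2C_{k}(y,t)$, so that
\[
2^{k}\phi_{k}(x_{1}+x_{2})+2(x_{1}+x_{2})^{k}=2^{k+1}C_{k}(y,t)+2(6y)^{k},
\]
and the left-hand side of Theorem~\ref{gh} collapses exactly into the left-hand side of the corollary.

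The only genuine computation lies in simplifying $P_{1}$. I would evaluate the two bases occurring there as
\[
x_{1}+x_{2}+x(x_{1}-x_{2})=6y+2x\sqrt{9y^{2}-t}=2\bigl(3y+x\sqrt{9y^{2}-t}\bigr),
\]
\[
2x_{2}+x(x_{1}-x_{2})=6y+2(x-1)\sqrt{9y^{2}-t}=2\bigl(3y+(x-1)\sqrt{9y^{2}-t}\bigr),
\]
then extract the factor $2^{n-1}$ from each $(n-1)$-th power and combine it with the prefactor $2n(x_{1}-x_{2})=4n\sqrt{9y^{2}-t}$. Since $4n\cdot 2^{n-1}=n2^{n+1}$, this produces precisely
\[
P_{1}=n2^{n+1}\sqrt{9y^{2}-t}\,\Bigl((3y+x\sqrt{9y^{2}-t})^{n-1}+(3y+(x-1)\sqrt{9y^{2}-t})^{n-1}\Bigr),
\]
which is the claimed right-hand side.

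The step I expect to demand the most care is this last one: the exponent carried by $2$ is accumulated from three independent sources — the explicit $2n(x_{1}-x_{2})$ prefactor, the substitution $x_{1}-x_{2}=2\sqrt{9y^{2}-t}$, and the common factor $2$ pulled out of each base before raising to the $(n-1)$-th power — while the relations $C_{k}=\frac{1}{2}(\lambda_{1}^{k}+\lambda_{2}^{k})$ and $x_{1}+x_{2}=6y$ must be tracked consistently. There is no analytic obstacle here; the proof is finished once the power-of-two bookkeeping is verified.
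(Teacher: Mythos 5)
Your proposal is correct and follows exactly the route the paper intends: Corollary \ref{7} is stated in the paper as the Case 2 specialization of Theorem \ref{gh} with $x_{1}=3y+\sqrt{9y^{2}-t}$, $x_{2}=3y-\sqrt{9y^{2}-t}$, and your substitution $\phi_{k}(x_{1}+x_{2})=2C_{k}(y,t)$, together with the bookkeeping $2n(x_{1}-x_{2})\cdot 2^{n-1}=n2^{n+1}\sqrt{9y^{2}-t}$, is precisely the verification the paper leaves implicit. Your use of the plus-sign Binet formula $C_{k}(y,t)=\frac{1}{2}(\lambda_{1}^{k}+\lambda_{2}^{k})$ is the right one (the paper's displayed minus sign there is a typo, as is its repeated value of $\lambda_{2}$).
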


\begin{corollary}\label{cr10}
Let $n$ be an positive integer, we have
\begin{align*}
\sum_{k=0}^{n}\binom{n}{k}%
(2\sqrt{9y^{2}-t})^{n-k-1}\left(2^{k+1}C_{k}(y,t)-(6y)^{k}\right)B_{n-k}(x)
&=n(6y+2x\sqrt{9y^{2}-t})^{n-1}\\
&-n(6y+(2x-2)\sqrt{9y^{2}-t})^{n-1}.\notag
\end{align*}
\end{corollary}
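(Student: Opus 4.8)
The plan is to obtain this corollary as a direct specialization of the Bernoulli-polynomial convolution identity \eqref{Z35} to the two-element alphabet arising from the bivariate Lucas-balancing characteristic equation. Concretely, I would take $x_{1}=3y+\sqrt{9y^{2}-t}$ and $x_{2}=3y-\sqrt{9y^{2}-t}$, the roots $\lambda_{1},\lambda_{2}$ of $\lambda^{2}-6y\lambda+t=0$ that underlie the Binet formulas for $B^{*}_{n}(y,t)$ and $C_{n}(y,t)$, and substitute them throughout \eqref{Z35}.

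First I would record the elementary reductions $x_{1}-x_{2}=2\sqrt{9y^{2}-t}$, $x_{1}+x_{2}=6y$, and $\phi_{k}(x_{1}+x_{2})=x_{1}^{k}+x_{2}^{k}=\lambda_{1}^{k}+\lambda_{2}^{k}$. Using the Lucas-balancing Binet formula in the form $\lambda_{1}^{k}+\lambda_{2}^{k}=2C_{k}(y,t)$, the symmetric weight appearing in \eqref{Z35} becomes $2^{k}\phi_{k}(x_{1}+x_{2})-2(x_{1}+x_{2})^{k}=2^{k+1}C_{k}(y,t)-2(6y)^{k}$, while the prefactor reduces to $(x_{1}-x_{2})^{n-k-1}=(2\sqrt{9y^{2}-t})^{n-k-1}$ and $B_{n-k}(x)$ is left untouched. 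Collecting these pieces reproduces the left-hand side of the corollary.

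For the right-hand side I would substitute into $P_{2}=n\bigl((x_{1}+x_{2}+(x_{1}-x_{2})x)^{n-1}-((x_{1}-x_{2})x+2x_{2})^{n-1}\bigr)$. The first base simplifies to $6y+2x\sqrt{9y^{2}-t}$; for the second I would first note $2x_{2}=6y-2\sqrt{9y^{2}-t}$, so that $(x_{1}-x_{2})x+2x_{2}=6y+(2x-2)\sqrt{9y^{2}-t}$, which delivers exactly the claimed difference of $(n-1)$-st powers.

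Since every manipulation is a substitution into the already-established identity \eqref{Z35}, there is no genuine obstacle. The only point requiring care is the correct Binet normalization $C_{k}(y,t)=\tfrac{1}{2}(\lambda_{1}^{k}+\lambda_{2}^{k})$, which is what converts the power-sum symmetric function $\phi_{k}$ into the Lucas-balancing polynomial and pins down the powers of $2$ in the weight; after that the two base expressions $x_{1}+x_{2}+(x_{1}-x_{2})x$ and $(x_{1}-x_{2})x+2x_{2}$ collapse by routine algebra.
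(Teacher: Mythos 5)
Your route is exactly the paper's: Corollary \ref{cr10} is obtained there by specializing the Bernoulli-polynomial identity \eqref{Z35} at $x_{1}=3y+\sqrt{9y^{2}-t}$, $x_{2}=3y-\sqrt{9y^{2}-t}$, and your reductions $x_{1}-x_{2}=2\sqrt{9y^{2}-t}$, $x_{1}+x_{2}=6y$, $\phi_{k}(x_{1}+x_{2})=2C_{k}(y,t)$ (using the correct Binet normalization $C_{k}=\tfrac{1}{2}(\lambda_{1}^{k}+\lambda_{2}^{k})$; the paper's displayed Binet formula for $C_{n}$ has a sign typo), together with your simplification of $P_{2}$, are all correct.

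One point, however, needs to be stated rather than glossed over. Your substitution yields the weight $2^{k}\phi_{k}(x_{1}+x_{2})-2(x_{1}+x_{2})^{k}=2^{k+1}C_{k}(y,t)-2(6y)^{k}$, whereas the corollary as printed has $2^{k+1}C_{k}(y,t)-(6y)^{k}$; these differ, so your claim that collecting the pieces ``reproduces the left-hand side of the corollary'' is not literally true. The version you derived is the correct one: for instance at $n=2$, with the weight $2^{k+1}C_{k}(y,t)-2(6y)^{k}$ both sides equal $4\sqrt{9y^{2}-t}$, while with the printed weight the left side acquires the spurious $x$-dependent term $2\sqrt{9y^{2}-t}\,B_{2}(x)+12y\,B_{1}(x)$ and the identity fails. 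Your form is also the one consistent with the neighbouring Corollaries \ref{7} and \ref{8}, where the $(6y)^{k}$ term does carry the factor $2$. So your proof is sound and follows the paper's own method; you should simply note explicitly that the corollary as printed drops a factor of $2$ on the $(6y)^{k}$ term, instead of asserting agreement with the printed statement.
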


\begin{corollary}\label{8}
Let $n$ be a positive integer, we have
\begin{align*}
\sum_{k=0}^{n}\binom{n}{k}(2\sqrt{9y^{2}-t})^{n-k}(2^{k+1}C_{k}(y,t)+2(6y)^{k})E_{n-k}(x)&=2(6y+2\sqrt{9y^{2}-t}x)^{n}\\
&+2(6y+(x-2)\sqrt{9y^{2}-t})^{n}.\notag
\end{align*}
\end{corollary}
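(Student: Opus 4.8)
The plan is to mirror the generating-function technique of Theorem~\ref{gh} and the preceding results, now applied to the Euler polynomials. First I would perform the substitution $z\mapsto(x_{1}-x_{2})z$ in the defining generating function \eqref{18}, producing
\begin{equation*}
\sum_{n=0}^{\infty}(x_{1}-x_{2})^{n}E_{n}(x)\frac{z^{n}}{n!}=\frac{2\exp(x(x_{1}-x_{2})z)}{\exp((x_{1}-x_{2})z)+1}.
\end{equation*}
Separately, I would square the generating function \eqref{l} of $\phi_{k}(x_{1}+x_{2})$; by the convolution identity \eqref{a2} this square equals $\sum_{k=0}^{\infty}\left(2^{k}\phi_{k}(x_{1}+x_{2})+2(x_{1}+x_{2})^{k}\right)\frac{z^{k}}{k!}$, which is precisely the bracketed factor appearing on the left-hand side of \eqref{adl}.

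Next I would form the product of these two series. On the coefficient side, the Cauchy product immediately yields $\sum_{n=0}^{\infty}\left(\sum_{k=0}^{n}\binom{n}{k}(x_{1}-x_{2})^{n-k}\left(2^{k}\phi_{k}(x_{1}+x_{2})+2(x_{1}+x_{2})^{k}\right)E_{n-k}(x)\right)\frac{z^{n}}{n!}$, which is exactly the left-hand side of \eqref{adl}. On the closed-form side, I would invoke the factorization $\left(\exp(x_{1}z)+\exp(x_{2}z)\right)^{2}=\exp(2x_{2}z)\left(\exp((x_{1}-x_{2})z)+1\right)^{2}$ and multiply by $\frac{2\exp(x(x_{1}-x_{2})z)}{\exp((x_{1}-x_{2})z)+1}$; one copy of $\exp((x_{1}-x_{2})z)+1$ cancels, leaving $2\exp((2x_{2}+x(x_{1}-x_{2}))z)\left(\exp((x_{1}-x_{2})z)+1\right)$, which expands as $2\exp((x_{1}+x_{2}+(x_{1}-x_{2})x)z)+2\exp((2x_{2}+(x_{1}-x_{2})x)z)$.

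Finally I would read off the coefficient of $z^{n}/n!$ from these two exponentials, obtaining $2\left((x_{1}+x_{2}+(x_{1}-x_{2})x)^{n}+(2x_{2}+(x_{1}-x_{2})x)^{n}\right)=P_{3}$, and equate it with the convolution coefficient to conclude. The main obstacle---really the only nontrivial step---is the algebraic cancellation: one must recognize that squaring $\exp(x_{1}z)+\exp(x_{2}z)$ reproduces exactly the squared denominator $\left(\exp((x_{1}-x_{2})z)+1\right)^{2}$ up to the shift factor $\exp(2x_{2}z)$, so that it telescopes against the single denominator of the Euler generating function and collapses the whole expression into a sum of two clean exponentials. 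Unlike the Bernoulli and Genocchi cases treated earlier, no parity restriction on the summation index appears here, because the closed form involves the full Euler polynomials rather than only their constant terms.
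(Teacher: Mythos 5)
Your generating-function computation is correct, and it is essentially the paper's own proof of Theorem~\ref{hg}: substitute $z\mapsto(x_{1}-x_{2})z$ in \eqref{18}, identify $\left(\exp(x_{1}z)+\exp(x_{2}z)\right)^{2}$ with $\sum_{k\geq 0}\left(2^{k}\phi_{k}(x_{1}+x_{2})+2(x_{1}+x_{2})^{k}\right)\frac{z^{k}}{k!}$ via \eqref{a2}, and cancel one factor of $\exp((x_{1}-x_{2})z)+1$ against the Euler denominator. The problem is that this establishes the general identity \eqref{adl}, which is not the statement you were asked to prove. Corollary~\ref{8} is a statement about bivariate Lucas-balancing polynomials, and your argument never mentions $C_{k}(y,t)$, $y$, or $t$ at all. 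The missing step --- which, since Theorem~\ref{hg} is already available in the paper, is the entire content of the corollary --- is the specialization $x_{1}=3y+\sqrt{9y^{2}-t}$, $x_{2}=3y-\sqrt{9y^{2}-t}$ combined with the Binet formula $C_{k}(y,t)=\frac{1}{2}\left(x_{1}^{k}+x_{2}^{k}\right)$, which gives $\phi_{k}(x_{1}+x_{2})=2C_{k}(y,t)$, $x_{1}-x_{2}=2\sqrt{9y^{2}-t}$, and $x_{1}+x_{2}=6y$; that is exactly what turns the left side of \eqref{adl} into $\sum_{k=0}^{n}\binom{n}{k}\left(2\sqrt{9y^{2}-t}\right)^{n-k}\left(2^{k+1}C_{k}(y,t)+2(6y)^{k}\right)E_{n-k}(x)$.

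Had you carried out that substitution, you would also have noticed that the printed statement cannot be what your argument yields: the second base on the right-hand side of \eqref{adl} specializes to $2x_{2}+(x_{1}-x_{2})x=6y+(2x-2)\sqrt{9y^{2}-t}$, so the correct right-hand side is
\begin{equation*}
2\left(6y+2x\sqrt{9y^{2}-t}\right)^{n}+2\left(6y+(2x-2)\sqrt{9y^{2}-t}\right)^{n},
\end{equation*}
with $(2x-2)$ where the corollary prints $(x-2)$. So the general part of your proof is sound and matches the paper's method, but as a proof of Corollary~\ref{8} it is incomplete until the specialization is performed, and once performed it proves the corrected identity above rather than the one as printed. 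Your closing remark on parity is fine: no restriction on the summation index appears because odd-indexed Euler polynomials, unlike odd-indexed Euler numbers, do not vanish.
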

\begin{itemize}
\item If we put $t=1$ in the corollaries \ref{7}$-$\ref{8}, we    obtain the connections between the Genocchi, Bernoulli and Euler  polynomials and
 balancing and Lucas- balancing polynomials.
\item If we set $y=t=1$ in the corollaries \ref{7}$-$\ref{8}, We derive connections between  the Genocchi, Bernoulli and Euler  polynomials and
balancing and Lucas- balancing numbers.
\end{itemize}
\end{enumerate}
\section{Conclusion}

In this paper, we introduced several new 
convolution sums formulas of symmetric
functions,  Bernoulli, Euler and Genocchi numbers and polynomials.
From these formulas we deduce some special cases of bivariate polynomials  such as bivariate Fibonacci,  bivariate Lucas,  bivariate balancing and  bivariate Lucas-balancing polynomials.\\

\noindent\textbf{Acknowledgements}\\
The authors wish to express their gratitude to the referees for their corrections and numerous valuable suggestions.

\end{document}